\newtheorem{definicion}{Definition}[section]
\newtheorem{teorema}[definicion]{Theorem}
\newtheorem{lema}[definicion]{Lemma}
\newtheorem{claim}[definicion]{Claim}
\newtheorem{corolario}[definicion]{Corollary}
\newtheorem{fact}[definicion]{Fact}
\title{On the reducibility of isomorphism relations}
\author{Tapani Hyttinen, Miguel Moreno\\ University of Helsinki}
\begin{document}
\maketitle
\begin{abstract}
We study the Borel reducibility of isomorphism relations in the generalized Baire space $\kappa^\kappa$. In the main result we show for inaccessible $\kappa$, that if $T$ is a classifiable theory and $T'$ is stable with OCP, then the isomorphism of models of $T$ is Borel reducible to the isomorphism of models of $T'$.
\end{abstract}
\section{Introduction}
One of the main motivations behind writing
\cite{FHK13} was the possibility that Borel reducibility
in generalized Baire spaces can be used to measure
the complexity of countable first-order theories
(we concentrated on elementary classes with countable vocabulary, since for them
there is a lot of structure theory):
We say that $T$ is simpler than $T'$
if the isomorphism relation among models of
$T$ with universe $\kappa$ ($\cong_{T}$) is Borel (or continuously)
reducible to the isomorphism relation among
the models of $T'$ with universe $\kappa$. Here, and throughout the paper, we assume that $\kappa^{<\kappa}=\kappa>\aleph_0$ (see \cite{FHK13} for the discussion why $\kappa=\aleph_0$ does not work). The results reviewed in this introduction often require further assumptions on $\kappa$, but for sake of clarity the details are omitted and the reader is referred to the original papers for the exact assumptions on $\kappa$. The question when
such reduction exists
turned out to be harder
than we expected.\\ \\
In \cite{FHK13} the results were negative: It was shown that if $T$
is classifiable (superstable with NOTOP and NDOP) and shallow
and $T'$ is not, then $\cong_{T'}$ is not
Borel reducible to $\cong_{T}$ and that at least consistently,
if $T$ is classifiable and $T'$ is not, then
$\cong_{T'}$ is not Borel reducible to $\cong_{T}$.\\ \\
In \cite{HK}, some positive results were obtained: If $V=L$, then all the $\Sigma_1^1$ equivalence relations are reducible to $\cong_{DLO}$,
where $DLO$ is the theory of dense linear orderings without
end points (in \cite{FS} it was proved for $\kappa=\omega$ that $\cong_{DLO}$ is Borel complete, and the proof in \cite{HK} is similar). Also it was shown that consistently the same is true
for $T_{\omega+\omega}$ (see below). Obviously, there are theories for
which this holds trivially e.g. graphs (even random graphs with a bit more work).
Also it was shown that if a theory $T'$ has this property and $V=L$,
then $\cong_{T'}$ is $\Sigma^{1}_{1}$-complete.\\ \\
Finally, by combining Corollary 15 from \cite{FHK14} and
the proof of Theorem 16 from \cite{HK}, it follows
that if $T$ is classifiable and shallow, then
$\cong_{T}$ is reducible to $\cong_{T_{\omega +\omega}}$.\\ \\
In this paper we improve two of the results mentioned
above. We start by showing that if $T$ is classifiable,
then $\cong_{T}$ is Borel reducible to $\cong_{T_{\omega}}$
(again, see below) and that if $V=L$, then $\cong_{T_{\omega}}$
is $\Sigma^{1}_{1}$-complete. And then
under heavy assumptions on $\kappa$, we generalize a lot: We show that
if $T$ is classifiable and $T'$ is stable with OCP (see below),
then $\cong_{T}$ is continuously reducible to $\cong_{T'}$
and if in addition $V=L$, then $\cong_{T'}$ is $\Sigma^{1}_{1}$-complete.
The property OCP implies that $T'$ is unsuperstable
and it is common among stable unsuperstable theories.
E.g. both $T_{\omega}$ and $T_{\omega +\omega}$ mentioned above have it.
It is also easy to find complete theories of abelian groups (or more generally
elementary theories of ultrametric spaces)
that have the property. What does not seem to be easy, is to find
a strictly stable theory that does not have the property.\\ \\
We are going to work on the generalised Baire space $\kappa^\kappa$  with the following topology. For every $\zeta\in \kappa^{<\kappa}$, we call the set $$[\zeta]=\{\eta\in\kappa^\kappa|\zeta\subset\eta\}$$ a basic open set. Then the open sets are of the form $\bigcup X$ where $X$ is a collection of basic open sets. The $\kappa$-Borel space of $\kappa^\kappa$ is the smallest set, which contains the basic open sets, and is closed under unions and intersections, both of length $\kappa$. A Borel set, is any element of the $\kappa$-Borel space. Suppose $X$ and $Y$ are subsets of $\kappa^\kappa$, a function $f:X\rightarrow Y$ is a Borel function, if for every open set $A\subseteq Y$, $f^{-1}[A]$ is a Borel set in $X$ of $\kappa^\kappa$.\\ \\
Suppose $X$ and $Y$ are subsets of $\kappa^\kappa$, let $E_1$ and $E_2$ be equivalent relations on $X$ and $Y$ respectively. If a function $f:X\rightarrow Y$ satisfies $E_1(x,y)\Leftrightarrow E_2(f(x),f(y))$, we say that $f$ is a reduction of $E_1$ to $E_2$. If there exists a Borel function that is a reduction, we say that $E_1$ is Borel reducible to $E_2$ and we denote it by $E_1\leq_B E_2$. If there exists a continuous function that is a reduction, we say that $E_1$ is continuously reducible to $E_2$ and we denote it by $E_1\leq_c E_2$.\\ \\
For every regular cardinal $\mu<\kappa$, we say that a set $A\subseteq \kappa$ is a $\mu$-club if it is unbounded and closed under $\mu$-limits. Clearly the intersection of two $\mu$-clubs is also a $\mu$-club and every $\mu$-club is stationary.\\
On the space $\kappa ^\kappa$, we say that $f,g\in \kappa^\kappa$ are $E^\kappa_{\mu\text{-club}}$ equivalent ($f\  E^\kappa_{\mu\text{-club}}\ g$) if the set $\{\alpha<\kappa|f(\alpha)=g(\alpha)\}$ contains a $\mu$-club.


\section{Classifiable Theories}
Let us fix a countable relation vocabulary $\mathcal{L}=\{R_{(n,m)}|n,m\in \omega\backslash\{0\}\}$, where $R_{(n,m)}$ is an $n$-ary relation. Fix a bijection $g:\omega\backslash\{0\}\times \omega\backslash\{0\}\rightarrow\omega$, define $P_{g(n,m)}:=R_{(n,m)}$ and rewrite $\mathcal{L}=\{P_n|n<\omega\}$. Denote $g^{-1}(\alpha)$ by $(g^{-1}_1(\alpha),g^{-1}_2(\alpha))$. When we describe a complete theory $T$ in a vocabulary $L\subseteq \mathcal{L}$, we think it as a complete $\mathcal{L}$-theory $T\cup \{\forall \bar{x}\neg
P_n(\bar{x})|P_n\in \mathcal{L}\backslash L\}$. We can code $\mathcal{L}$-structures with domain $\kappa$ as follows.
\begin{definicion}
Fix $\pi$ a bijection between $\kappa^{<\omega}$ and $\kappa$. For every $\eta\in \kappa^\kappa$ define the structure $\mathcal{A}_\eta$ with domain $\kappa$ as follows.\\
For every tuple $(a_1,a_2,\ldots , a_n)$ in $\kappa^n$ $$(a_1,a_2,\ldots , a_n)\in P_m^{\mathcal{A}_\eta}\Leftrightarrow n=g_1^{-1}(m) \text{ and }\eta(\pi(m,a_1,a_2,\ldots,a_n))>0.$$
\end{definicion}
This defines a map from $\kappa^\kappa$ onto the set of $\mathcal{L}$-structures with domain $\kappa$.
\begin{definicion}
(The isomorphism relation) Assume $T$ a complete first order theory in a countable vocabulary and $\eta,\xi\in \kappa^\kappa$, we define $\cong_T$ as the relation $$\{(\eta,\xi)|(\mathcal{A}_\eta\models T, \mathcal{A}_\xi\models T, \mathcal{A}_\eta\cong \mathcal{A}_\xi)\text{ or } (\mathcal{A}_\eta\not\models T, \mathcal{A}_\xi\not\models T)\}$$
\end{definicion}
The following game is the usual Ehrenfeucht-Fra\"{i}ss\'e game with structures of domain $\kappa$ and moves coded by ordinals.
The Ehrenfeucht-Fra\"{i}ss\'e game will be useful for the study of $\cong_T$ when $T$ is classifiable.\\
Shelah proved \cite{Sh} that when $T$ is classifiable, two models $\mathcal{A}$ and $\mathcal{B}$ are isomorphic if and only if the second player has a winning strategy in the Ehrenfeucht-Fra\"{i}ss\'e game EF$^\kappa_\omega (\mathcal{A},\mathcal{B})$.\\
We will show that the existence of a winning strategy depends on the existence of a club on $\kappa$. We can study the isomorphism relation by studying the relation $E^\kappa_{\mu\text{-club}}$.
\begin{definicion}
(Ehrenfeucht-Fra\"{i}ss\'e game) Fix $\{X_\gamma\}_{\gamma<\kappa}$ an enumeration of the elements of $\mathcal{P}_\kappa(\kappa)$ and $\{f_\gamma\}_{\gamma<\kappa}$ an enumeration of all the functions with domain in $\mathcal{P}_\kappa(\kappa)$ and range in $\mathcal{P}_\kappa(\kappa)$.
For every pair of structures $\mathcal{A}$ and $\mathcal{B}$ with domain $\kappa$, the EF$^\kappa_\omega (\mathcal{A},\mathcal{B})$ is a game played by the players $\bf{I}$ and $\bf{II}$ as follows.\\
In the $n$-th move, first $\bf{I}$ chooses an ordinal $\beta_n<\kappa$ such that $X_{\beta_{n-1}}\subset X_{\beta_n}$, and then $\bf{II}$ an ordinal $\theta_n<\kappa$ such that $X_{\beta_n}\subseteq dom(f_{\theta_n})\cap rang(f_{\theta_n})$ and $f_{\theta_{n-1}}\subset f_{\theta_n}$ (if $n=0$ then $X_{\beta_{n-1}}=\emptyset$ and $f_{\theta_{n-1}}=\emptyset$).\\
The game finishes after $\omega$ moves. The player $\bf{II}$ wins if $\cup_{i<\omega}f_{\theta_i}:A\rightarrow B$ is a partial isomorphism, otherwise the player $\bf{I}$ wins.
\end{definicion}
For every $\alpha<\kappa$ we can define the restricted game EF$^\kappa_\omega (\mathcal{A}\restriction_\alpha,\mathcal{B}\restriction_\alpha)$ for structures $\mathcal{A}$ and $\mathcal{B}$ with domain $\kappa$, as follows.\\
In the $n$-th move, first $\bf{I}$ choose an ordinal $\beta_n<\alpha$ such that $X_{\beta_n}\subset \alpha$, $X_{\beta_{n-1}}\subseteq X_{\beta_n}$, and then $\bf{II}$ an ordinal $\theta_n<\alpha$ such that $dom(f_{\theta_n}),rang(f_{\theta_n})\subset \alpha$, $X_{\beta_n}\subseteq dom(f_{\theta_n})\cap rang(f_{\theta_n})$ and $f_{\theta_{n-1}}\subseteq f_{\theta_n}$ (if $n=0$ then $X_{\beta_{n-1}}=\emptyset$ and $f_{\theta_{n-1}}=\emptyset$). The game finishes after $\omega$ moves. The player $\bf{II}$ wins if $\cup_{i<\omega}f_{\theta_i}:A\restriction_\alpha\rightarrow B\restriction_\alpha$ is a partial isomorphism, otherwise the player $\bf{I}$ wins.\\
Notice that now wining strategies are functions from $\kappa^{<\kappa}$ to $\kappa$.\\
We will write $\bf{I}\uparrow$ EF$^\kappa_\omega (\mathcal{A}\restriction_\alpha,\mathcal{B}\restriction_\alpha)$ when $\bf{I}$ has a winning strategy in the game EF$^\kappa_\omega (\mathcal{A}\restriction_\alpha,\mathcal{B}\restriction_\alpha)$, similarly we write $\bf{II}\uparrow$ EF$^\kappa_\omega (\mathcal{A}\restriction_\alpha,\mathcal{B}\restriction_\alpha)$ when $\bf{II}$ has a winning strategy.  
\begin{lema}
If $\mathcal{A}$ and $\mathcal{B}$ are structures with domain $\kappa$, then the following hold:
\begin{itemize}
\item $\bf{II}\uparrow$ EF$^\kappa_\omega (\mathcal{A},\mathcal{B})\Longleftrightarrow \bf{II}\uparrow$ EF$^\kappa_\omega (\mathcal{A}\restriction_\alpha,\mathcal{B}\restriction_\alpha)$ for club-many $\alpha$.
\item $\bf{I}\uparrow$ EF$^\kappa_\omega (\mathcal{A},\mathcal{B})\Longleftrightarrow \bf{I}\uparrow$ EF$^\kappa_\omega (\mathcal{A}\restriction_\alpha,\mathcal{B}\restriction_\alpha)$ for club-many $\alpha$.
\end{itemize}
\end{lema}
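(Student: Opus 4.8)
The plan is to reduce all four implications (both directions of each bullet) to two closure arguments together with the determinacy of the full game. First I would record that EF$^\kappa_\omega(\mathcal{A},\mathcal{B})$ is determined: a play is an element of $\kappa^\omega$, and the set of plays won by $\bf{II}$ is closed, since if $\cup_i f_{\theta_i}$ fails to be a partial isomorphism the failure (non-injectivity, or some relation not preserved) is already witnessed by a finite tuple lying in some $f_{\theta_n}$; hence $\bf{I}$'s winning set is open and Gale--Stewart applies, both players being always able to move so that there are no dead ends. The same payoff is closed in every restricted game, so beyond this I will only use the trivial fact that the two players cannot both possess winning strategies in one fixed game.

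For the forward direction of the first bullet ($\bf{II}$ wins the full game $\Rightarrow$ $\bf{II}$ wins the restricted game for club-many $\alpha$) I would fix a winning strategy $\sigma$ for $\bf{II}$ and close $\kappa$ under it. A partial play following $\sigma$ is determined by $\bf{I}$'s moves, a finite sequence of ordinals, so the relevant data is coded by $\kappa^{<\omega}=\kappa$ objects. Define $F:\kappa\to\kappa$ sending $\gamma$ to an ordinal bounding every code $\theta_n$ returned by $\sigma$ together with every ordinal occurring in $\mathrm{dom}(f_{\theta_n})\cup\mathrm{rang}(f_{\theta_n})$, the sup taken over all $\sigma$-plays whose moves have codes $<\gamma$ and contents $\subset\gamma$; this is well defined below $\kappa$ because there are $<\kappa$ such plays and $\kappa$ is regular. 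The closure points of $F$ form a club $C$, and for $\alpha\in C$ the restriction of $\sigma$ to plays with content $\subset\alpha$ is a legal strategy in EF$^\kappa_\omega(\mathcal{A}\restriction_\alpha,\mathcal{B}\restriction_\alpha)$: every move $\bf{I}$ is allowed in the restricted game is a legal move of the full game, $\sigma$'s answer again lands below $\alpha$ by closure, and the remaining legality clauses are inherited from the full game. Since $\cup_i f_{\theta_i}$ being a partial isomorphism is the same statement for $\mathcal{A},\mathcal{B}$ and for their restrictions to $\alpha$ (all elements involved are $<\alpha$), $\bf{II}$ wins the restricted game. The forward direction of the second bullet is entirely analogous with $\bf{I}$'s winning strategy $\tau$ in place of $\sigma$: closing under $\tau$ keeps $\bf{I}$'s answers below $\alpha$, every restricted $\bf{II}$-move is a legal full move to which $\tau$ replies, and any $\tau$-play below $\alpha$ is an $\bf{I}$-win already witnessed inside $\alpha$.

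The two converse implications I would obtain from determinacy rather than directly. Suppose $\bf{II}$ wins the restricted game for club-many $\alpha$ but, towards a contradiction, does not win the full game; by determinacy $\bf{I}$ wins the full game, so by the forward direction of the second bullet $\bf{I}$ wins the restricted game for club-many $\alpha$. Intersecting the two clubs produces a single $\alpha$ at which both players have a winning strategy in EF$^\kappa_\omega(\mathcal{A}\restriction_\alpha,\mathcal{B}\restriction_\alpha)$, which is impossible; hence $\bf{II}$ wins the full game, giving the converse of the first bullet, and the converse of the second follows symmetrically. I expect the main obstacle to be exactly this backward passage: a naive attempt to assemble a full winning strategy from the restricted ones fails because a player must commit to moves without knowing the eventual bound $\delta=\sup_n\sup X_{\beta_n}<\kappa$ of $\bf{I}$'s play, so some global input, here the determinacy of the length-$\omega$ game, is needed. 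A secondary point requiring care is the bookkeeping that makes ``closed under $\sigma$'' literally a closure-under-a-function argument, i.e. verifying that the coding of finite plays by ordinals $<\kappa$ turns $F$ into a genuine map $\kappa\to\kappa$.
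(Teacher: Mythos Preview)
Your proposal is correct and follows essentially the same route as the paper: the forward implications are obtained by closing $\kappa$ under a fixed winning strategy (the paper splits this into two clubs $C_\sigma=\{\alpha:\sigma[\alpha^{<\omega}]\subseteq\alpha\}$ and $C_H$ coming from $H(\alpha)=\sup(\mathrm{rang}(f_\alpha)\cup\mathrm{dom}(f_\alpha)\cup X_\alpha)$, whereas you bundle both into one closure function $F$), and the backward implications are derived from determinacy of the full game together with the forward implications, intersecting the two clubs to reach a contradiction. Your additional remark justifying determinacy via Gale--Stewart for closed games is a detail the paper leaves implicit.
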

\begin{proof}
Let us start by, $\bf{II}\uparrow$ EF$^\kappa_\omega (\mathcal{A},\mathcal{B})\Rightarrow \bf{II}\uparrow$ EF$^\kappa_\omega (\mathcal{A}\restriction_\alpha,\mathcal{B}\restriction_\alpha)$ for club-many $\alpha$.\\
Suppose $\sigma$ is a winning strategy for $\bf{II}$ and denote by $C_\sigma$ the club $\{\alpha< \kappa: \sigma[\alpha^{<\omega}]\subseteq \alpha\}$. Define the function $H:\kappa\rightarrow \kappa$ by $H(\alpha)=sup(rang(f_\alpha)\cup dom(f_\alpha)\cup X_\alpha)$, this function defines the club $C_H:=\{\gamma<\kappa|\forall\alpha<\gamma(H(\alpha)<\gamma)\}$.\\
For all $\alpha\in C_\sigma\cap C_H$, $\alpha$ satisfies $\sigma[\alpha^{<\omega}]\subseteq \alpha$ and every $\beta<\alpha$ satisfies $sup(rang(f_\beta)\cup dom(f_\beta))<\alpha$. Then the domain and range of $f_\beta$ are subsets of $\alpha$. We conclude that $\sigma\restriction_{\alpha^{<\omega}}$ is a winning strategy for $\bf{II}$ in the restricted game EF$^\kappa_\omega (\mathcal{A}\restriction_\alpha,\mathcal{B}\restriction_\alpha)$. Since the intersection of clubs is a club, then there are club many $\alpha$ such that $\bf{II}\uparrow$ EF$^\kappa_\omega (\mathcal{A}\restriction_\alpha,\mathcal{B}\restriction_\alpha)$.\\ \\
The case $\bf{I}\uparrow$ EF$^\kappa_\omega (\mathcal{A},\mathcal{B})\Rightarrow \bf{I}\uparrow$ EF$^\kappa_\omega (\mathcal{A}\restriction_\alpha,\mathcal{B}\restriction_\alpha)$ for club-many $\alpha$ is similar.\\ \\
The two directions (from left to right) are proved in the same way, and thus we show only one. Suppose there are club many $\alpha$ such that $\bf{II}\uparrow$ EF$^\kappa_\omega (\mathcal{A}\restriction_\alpha,\mathcal{B}\restriction_\alpha)$ (denote this club by $C_{\bf{II}}$) and there is no winning strategy for $\bf{II}$ in the game EF$^\kappa_\omega (\mathcal{A},\mathcal{B})$. Since this game is a determined game, then $\bf{I}\uparrow$ EF$^\kappa_\omega (\mathcal{A},\mathcal{B})$. We already showed that this implies the existence of club many $\alpha$ such that $\bf{I}\uparrow$ EF$^\kappa_\omega (\mathcal{A}\restriction_\alpha,\mathcal{B}\restriction_\alpha)$ (denote this club by $C_{\bf{I}}$). Since the intersection of clubs is a club, then $C_{\bf{I}}\cap C_{\bf{II}}\neq \emptyset$. Therefore, there exists $\alpha$, such that both players have a winning strategy for the game EF$^\kappa_\omega (\mathcal{A}\restriction_\alpha,\mathcal{B}\restriction_\alpha)$, a contradiction.
\end{proof}
\begin{corolario}
For every $\mu<\kappa$ and every pair of structures $\mathcal{A}$ and $\mathcal{B}$ with domain $\kappa$, 
\begin{equation*}
{\bf II}\uparrow\text{ EF} ^\kappa_\omega (\mathcal{A},\mathcal{B})\Longleftrightarrow {\bf II}\uparrow\text{ EF} ^\kappa_\omega (\mathcal{A}\restriction_\alpha,\mathcal{B}\restriction_\alpha)\text{ for }\mu\text{-club-many } \alpha
\end{equation*}
\end{corolario}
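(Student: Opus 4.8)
The plan is to reduce everything to the preceding Lemma, exploiting two elementary facts about $\mu$-clubs. The first is that every club is a $\mu$-club: a club is unbounded and closed under all limits, so in particular it is closed under limits of cofinality $\mu$, which is exactly what is demanded of a $\mu$-club. The second, already recorded in the text, is that the intersection of two $\mu$-clubs is again a $\mu$-club, and is therefore nonempty (indeed stationary). These two observations are all that separate the statement of the Corollary from that of the Lemma.

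For the direction from left to right, I would assume ${\bf II}\uparrow$ EF$^\kappa_\omega(\mathcal{A},\mathcal{B})$. The Lemma then supplies a club $C$ with ${\bf II}\uparrow$ EF$^\kappa_\omega(\mathcal{A}\restriction_\alpha,\mathcal{B}\restriction_\alpha)$ for every $\alpha\in C$. Since $C$ is a club it is in particular a $\mu$-club, so ${\bf II}$ wins the restricted game for $\mu$-club-many $\alpha$, which is the required conclusion.

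For the converse I would argue by contradiction together with determinacy, mimicking the final paragraph of the Lemma's proof. Suppose ${\bf II}\uparrow$ EF$^\kappa_\omega(\mathcal{A}\restriction_\alpha,\mathcal{B}\restriction_\alpha)$ for all $\alpha$ in some $\mu$-club $D$, but ${\bf II}$ has no winning strategy in EF$^\kappa_\omega(\mathcal{A},\mathcal{B})$. As this game is determined, ${\bf I}\uparrow$ EF$^\kappa_\omega(\mathcal{A},\mathcal{B})$, and the Lemma then yields a club $C$ with ${\bf I}\uparrow$ EF$^\kappa_\omega(\mathcal{A}\restriction_\alpha,\mathcal{B}\restriction_\alpha)$ for every $\alpha\in C$. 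Now $C$ is a $\mu$-club, so $C\cap D$ is a nonempty $\mu$-club; any $\alpha\in C\cap D$ would be a point at which both players possess a winning strategy in EF$^\kappa_\omega(\mathcal{A}\restriction_\alpha,\mathcal{B}\restriction_\alpha)$, which is impossible.

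I do not expect a genuine obstacle here: the one point requiring care is the orientation of the closure conditions, namely that being closed under all limits is stronger than being closed under $\mu$-limits, so that the clubs sit \emph{inside} the collection of $\mu$-clubs rather than the other way around. Once this is fixed, the Lemma and the determinacy of EF$^\kappa_\omega(\mathcal{A},\mathcal{B})$ do all of the work, and the intersection property of $\mu$-clubs closes the argument.
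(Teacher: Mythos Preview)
Your argument is correct. In the paper the statement is left as an unproved corollary of the preceding Lemma, so there is no explicit proof to compare against; your derivation---every club is a $\mu$-club for the forward direction, and determinacy plus nonempty intersection of $\mu$-clubs for the converse---is exactly the intended reading and mirrors the structure of the Lemma's own final paragraph.
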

By Shelah's result \cite{Sh} we know that a classifiable theory, $\bf{II}\uparrow$ EF$^\kappa_\omega (\mathcal{A},\mathcal{B})$ implies that $\mathcal{A}$ and $\mathcal{B}$ are isomorphic. Therefore, for all $\eta,\xi\in \kappa^\kappa$, the player $\bf{II}\uparrow$ EF$^\kappa_\omega (\mathcal{A}_\eta,\mathcal{A}_\xi)$ if and only if $\eta\cong_T\xi$. We can use the restricted games to define new relations, one relation for each $\alpha$. By the previous corollary, we can use these relations to study the isomorphism relation.
\begin{definicion}
Assume $T$ is a complete first order theory in a countable vocabulary. For every $\alpha< \kappa$ and $\eta,\xi\in \kappa^\kappa$, we write $\eta\ R_{EF}^\alpha\ \xi$ if one of the following holds, $\mathcal{A}_\eta\restriction_\alpha\not\models T$ and $\mathcal{A}_\xi\restriction_\alpha\not\models T$, or $\mathcal{A}_\eta\restriction_\alpha\models T$, $\mathcal{A}_\xi\restriction_\alpha\models T$ and $\bf{II}\uparrow$ EF$^\kappa_\omega (\mathcal{A}_\eta\restriction_\alpha,\mathcal{A}_\xi\restriction_\alpha)$.
\end{definicion}
Notice that for each $\alpha\leq \kappa$, $R_{EF}^\alpha$ is a relation on $\kappa^\kappa\times\kappa^\kappa$ but it is not necessarily an equivalence relation. Fortunately there are club-many $\alpha$ such that $R_{EF}^\alpha$ is an equivalence relation. 
\begin{lema}
For every complete first order theory $T$ in a countable vocabulary, there are club many $\alpha$ such that $R_{EF}^\alpha$ is an equivalence relation. 
\end{lema}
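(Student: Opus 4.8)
\section*{Proof proposal}

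The three clauses to check are reflexivity, symmetry and transitivity, and throughout the modelling clause behaves trivially: since $\mathcal{A}_\eta\restriction_\alpha$ either models $T$ or it does not, if $\eta\ R_{EF}^\alpha\ \xi$ then $\mathcal{A}_\eta\restriction_\alpha$ and $\mathcal{A}_\xi\restriction_\alpha$ lie on the same side of ``$\models T$'', and likewise for any pair $\xi,\zeta$; hence the clause ``both $\restriction_\alpha\not\models T$'' never interferes with the game clause. So the entire content is to show that, for suitable $\alpha$, the relation ``$\mathbf{II}\uparrow$ EF$^\kappa_\omega(\mathcal{A}_\eta\restriction_\alpha,\mathcal{A}_\xi\restriction_\alpha)$'', restricted to those $\eta$ with $\mathcal{A}_\eta\restriction_\alpha\models T$, is an equivalence relation. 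No determinacy is needed here; the witnessing strategies will be built by hand.

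The plan is to isolate a club $C$ of ordinals $\alpha$ on which the enumerations $\{X_\gamma\}$ and $\{f_\gamma\}$ are closed under the operations used to manufacture those strategies. Concretely, I would put $\alpha\in C$ when: (a) for every $S\subseteq\alpha$ of the form $X_\gamma$, the identity $id_S$ equals some $f_\theta$ with $\theta<\alpha$; (b) for every $\gamma<\alpha$ with $f_\gamma$ injective and $dom(f_\gamma),rang(f_\gamma)\subseteq\alpha$, the inverse $f_\gamma^{-1}$ equals some $f_\theta$ with $\theta<\alpha$; (c) for all $\gamma,\delta<\alpha$ for which $f_\delta\circ f_\gamma$ is defined with everything inside $\alpha$, the composite equals some $f_\theta$ with $\theta<\alpha$; and (d) the binary union of any two of the sets $X_\gamma,dom(f_\gamma),rang(f_\gamma)$ with indices below $\alpha$ is again some $X_\delta$ with $\delta<\alpha$. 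Each of (a)--(d) is closure under a fixed function $\kappa\rightarrow\kappa$, so its set of closure points is a club, and $C$ is the intersection of these clubs, still a club since $\kappa^{<\kappa}=\kappa$. Fix $\alpha\in C$.

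Reflexivity and symmetry are then short. For reflexivity, if $\mathcal{A}_\eta\restriction_\alpha\models T$ then $\mathbf{II}$ wins EF$^\kappa_\omega(\mathcal{A}_\eta\restriction_\alpha,\mathcal{A}_\eta\restriction_\alpha)$ by answering a move $X_{\beta_n}$ of $\mathbf{I}$ with $id_{X_{\beta_n}}$, legal and of index below $\alpha$ by (a), whose limit is the identity, a partial isomorphism. For symmetry I would run a given winning strategy $\sigma$ for $\mathbf{II}$ in EF$^\kappa_\omega(\mathcal{A}_\eta\restriction_\alpha,\mathcal{A}_\xi\restriction_\alpha)$ backwards: feeding each move of $\mathbf{I}$ in EF$^\kappa_\omega(\mathcal{A}_\xi\restriction_\alpha,\mathcal{A}_\eta\restriction_\alpha)$ to $\sigma$ and answering with the inverse of $\sigma$'s response, which exists with index below $\alpha$ by (b) and is a partial isomorphism because along a play consistent with a winning strategy every move is a restriction of the final partial isomorphism, hence injective. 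Since $X_{\beta_n}$ is forced into both the domain and the range of $\sigma$'s function, it lands in both of its inverse, so this wins.

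The real work is transitivity. Given winning strategies $\sigma_1$ for $\mathbf{II}$ in EF$^\kappa_\omega(\mathcal{A}_\eta\restriction_\alpha,\mathcal{A}_\xi\restriction_\alpha)$ and $\sigma_2$ in EF$^\kappa_\omega(\mathcal{A}_\xi\restriction_\alpha,\mathcal{A}_\zeta\restriction_\alpha)$, I would describe a strategy in EF$^\kappa_\omega(\mathcal{A}_\eta\restriction_\alpha,\mathcal{A}_\zeta\restriction_\alpha)$ that simulates two auxiliary plays, acting as $\mathbf{I}$ against $\sigma_1$ and against $\sigma_2$. At round $n$, when $\mathbf{I}$ demands $X_{\beta_n}$, I first demand $X_{\beta_n}\cup dom(g_{n-1})$ against $\sigma_1$, obtaining $f_n\supseteq f_{n-1}$ (forcing $X_{\beta_n}\subseteq dom(f_n)$ and keeping the old middle set $dom(g_{n-1})\subseteq rang(f_n)$), and then demand $X_{\beta_n}\cup rang(f_n)$ against $\sigma_2$, obtaining $g_n\supseteq g_{n-1}$ with $rang(f_n)\subseteq dom(g_n)$ and $X_{\beta_n}\subseteq rang(g_n)$; the move played is $g_n\circ f_n$, legal of index below $\alpha$ by (c) and (d). The point of threading $dom(g_{n-1})$ and $rang(f_n)$ through the demands is that I do not try to synchronise the two copies of $\mathcal{A}_\xi\restriction_\alpha$ within a single round, which need not terminate, but let them equalise in the limit: writing $F=\bigcup_n f_n$ and $G=\bigcup_n g_n$, the inclusions give $rang(F)\subseteq dom(G)$ and $dom(G)\subseteq rang(F)$, so $rang(F)=dom(G)$ and $G\circ F=\bigcup_n(g_n\circ f_n)$ is a partial isomorphism with $dom(G\circ F)=dom(F)\supseteq\bigcup_n X_{\beta_n}$ and $rang(G\circ F)=rang(G)\supseteq\bigcup_n X_{\beta_n}$. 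Hence $\mathbf{II}$ wins, so $R_{EF}^\alpha$ is transitive on $C$. I expect this limit-synchronisation of the middle structure to be the only delicate point; the rest is bookkeeping against the closure conditions (a)--(d).
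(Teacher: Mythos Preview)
Your approach is the same as the paper's: isolate a club of ordinals closed under identities, inverses, unions, domains, ranges and composites, and then build the witnessing strategies by hand. Reflexivity and symmetry are handled exactly as the paper does them.

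There is, however, a genuine gap in your transitivity step. The rules of the restricted game require that each move $\theta_n$ of $\mathbf{II}$ satisfy $X_{\beta_n}\subseteq dom(f_{\theta_n})\cap rang(f_{\theta_n})$; this is a legality condition at every round, not just a limit condition. Your candidate move at round $n$ is $g_n\circ f_n$. Since $rang(f_n)\subseteq dom(g_n)$ you do get $dom(g_n\circ f_n)=dom(f_n)\supseteq X_{\beta_n}$, but for the range you need $g_n^{-1}[X_{\beta_n}]\subseteq rang(f_n)$. All you have arranged is $dom(g_{n-1})\subseteq rang(f_n)$, and the preimages $g_n^{-1}[X_{\beta_n}]$ may well lie in $dom(g_n)\setminus dom(g_{n-1})$. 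So $X_{\beta_n}\subseteq rang(g_n\circ f_n)$ can fail, and your strategy may prescribe an illegal move. Your limit argument that $rang(G\circ F)=rang(G)$ is correct, but it does not rescue the stage-by-stage legality.

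The paper repairs exactly this point by making \emph{two} moves against $\sigma_1$ per outer round: after obtaining $g_n$ it goes back to $\sigma_1$ demanding $dom(g_n)$, getting a larger $f_{(2,n)}$ with $dom(g_n)\subseteq rang(f_{(2,n)})$, and then plays $g_n\circ f_{(2,n)}$ (suitably restricted). With this extra move, $g_n^{-1}[X_{\beta_n}]\subseteq dom(g_n)\subseteq rang(f_{(2,n)})$, so the range condition holds and the move is legal. Your ``let them equalise in the limit'' idea is precisely the content of the paper's verification that the union is a partial isomorphism; the only thing missing is this one additional call to $\sigma_1$ per round.
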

\begin{proof}
Define the following functions:
\begin{itemize}
\item $h_1:\kappa\rightarrow \kappa$, $h_1(\alpha)=\gamma$ where $f_\gamma$ is the identity function of $X_\alpha$.
\item $h_2:\kappa\rightarrow \kappa$, $h_2(\alpha)=\gamma$ where $f_\alpha^{-1}=f_\gamma$.
\item $h_3:\kappa^2\rightarrow \kappa$, $h_3(\alpha,\beta)=X_{\alpha}\cup X_{\beta}= X_\gamma$.
\item $h_4:\kappa\rightarrow \kappa$, $h_4(\alpha)=rang(f_\alpha)= X_\gamma$.
\item $h_5:\kappa\rightarrow \kappa$, $h_5(\alpha)=dom(f_\alpha)= X_\gamma$.
\item $h_6:\kappa^2\rightarrow \kappa$, $h_6(\alpha,\beta)=\gamma$ where $f_{\alpha}\circ f_{\beta}=f_\gamma$, $f_{\alpha}\circ f_{\beta}$ is defined on the set $f_{\beta}^{-1}[rang(f_{\beta})\cap dom (f_{\alpha})]$.
\end{itemize}
Each of these functions defines a club,
\begin{itemize}
\item $C_i=\{\gamma<\kappa|\forall\alpha<\gamma(h_i(\alpha)<\gamma)\}$ for $i\in\{1,2,4,5\}$.
\item $C_i=\{\gamma<\kappa|\forall\beta,\alpha<\gamma(h_i(\alpha,\beta)<\gamma)\}$ for $i\in\{3,6\}$.
\end{itemize}
Denote by $C$ the club $\cap_{i=1}^6C_i$. We will show that for every $\alpha\in C$, $R_{EF}^\alpha$ is an equivalence relation.\\ \\
By definition $\eta\ R_{EF}^\alpha\ \xi$ implies that either both $\mathcal{A}_\eta$ and $\mathcal{A}_\xi$ are models of $T$ or non of them is a model of $T$. Thus  $R_{EF}^\alpha=R^-\cup R^+$, where $R^-$ is the restriction of $R_{EF}^\alpha$ to the set $A=\{\eta\in \kappa|\mathcal{A}_\eta\not\models T\}$ and $R^+$ is the restriction of $R_{EF}^\alpha$ to the complement of $A$. Since $R^-\cap R^+=\emptyset$, it is enough to prove that $R^-$ and $R^+$ are equivalence relations.\\
By definition it is easy to see that $R^-=A\times A$, therefore $R^-$ is an equivalence relation.
Now we will prove that $R^+$ is an equivalence relation.\\ \\
{\bf Reflexivity}\\
By the way $C_1$ was defined, for every $\beta<\alpha$, $h_1(\beta)<\alpha$ and $f_{h_1(\beta)}$ is the identity function of $X_\beta$. Therefore, the function $\sigma((\beta_0,\beta_1,\ldots ,\beta_n))=h_1(\beta_n)$ is a winning strategy for $\bf{II}$ in the game EF$^\kappa_\omega (\mathcal{A}_\eta\restriction_\alpha,\mathcal{A}_\eta\restriction_\alpha)$.\\ \\
{\bf Symmetry}\\
Let $\sigma$ be a winning strategy for $\bf{II}$ in the game EF$^\kappa_\omega (\mathcal{A}_\eta\restriction_\alpha,\mathcal{A}_\xi\restriction_\alpha)$. Since $\alpha\in C_2$ and $\sigma((\beta_0,\beta_1,\ldots,\beta_n))<\alpha$, we know that $h_2(\sigma((\beta_0,\beta_1,\ldots,\beta_n)))<\alpha$. Notice that if $\cup_{i<\omega}f_{\theta_i}:\alpha\rightarrow \alpha$ is a partial isomorphism from $\mathcal{A}_\eta\restriction_\alpha$ to $\mathcal{A}_\xi\restriction_\alpha$, then $\cup_{i<\omega}f_{h_2(\theta_i)}=\cup_{i<\omega}f_{\theta_i}^{-1}$ is a partial isomorphism from $\mathcal{A}_\xi\restriction_\alpha$ to $\mathcal{A}_\eta\restriction_\alpha$. Therefore, the function $\sigma'((\beta_0,\beta_1,\ldots,\beta_n))=h_2(\sigma((\beta_0,\beta_1,\ldots,\beta_n)))$ is a winning strategy for $\bf{II}$ in the game EF$^\kappa_\omega (\mathcal{A}_\xi\restriction_\alpha,\mathcal{A}_\eta\restriction_\alpha)$.\\ \\
{\bf Transitivity}\\
Let $\sigma_1$ and $\sigma_2$ be two winning strategies for $\bf{II}$ on the games EF$^\kappa_\omega (\mathcal{A}_\eta\restriction_\alpha,\mathcal{A}_\xi\restriction_\alpha)$ and EF$^\kappa_\omega (\mathcal{A}_\xi\restriction_\alpha,\mathcal{A}_\zeta\restriction_\alpha)$, respectively.\\
For a given tuple $(\beta_0,\beta_1,\ldots,\beta_n)$ let us construct by induction the tuples $(\gamma_0,\gamma_1,\ldots,\gamma_n)$, $(\beta'_0,\beta'_1,\ldots,\beta'_{2n},\beta'_{2n+1})$, and the functions $f_{(1,n)}$, $g_n$ and $f_{(2,n)}$:
\begin{enumerate}
\item Let $\beta'_0=\beta_0$ and for $i>0$, let $\beta'_{2i}$ be the least ordinal such that $X_{\beta'_{2i-1}}\cup X_{\beta_i}= X_{\beta'_{2i}}$.
\item $f_{(1,i)}:=f_{\sigma_1((\beta'_0,\beta'_1,\ldots,\beta'_{2i-1},\beta'_{2i}))}$.
\item $\gamma_i$ is the ordinal such that $X_{\gamma_i}=rang (f_{(1,i)})$.
\item $g_i:=f_{\sigma_2((\gamma_0,\gamma_1,\ldots,\gamma_i))}$.
\item $\beta'_{2i+1}$ is the ordinal such that $X_{\beta'_{2i+1}}=dom (g_i)$.
\item $f_{(2,i)}:=f_{\sigma_1((\beta'_0,\beta'_1,\ldots,\beta'_{2i},\beta'_{2i+1}))}$.
\end{enumerate}
Define the function $\sigma:\alpha^{<\omega}\rightarrow \alpha$ by $\sigma((\beta_0,\beta_1,\ldots,\beta_n))=\theta_{n}$, where $\theta_n$ is the ordinal such that $f_{\theta_n}=g_n\circ (f_{(2,n)}\restriction_{f_{(2,n)}^{-1}[dom (g_n)]})$. It is easy to check that for every $n$, the tuples $(\gamma_0,\gamma_1,\ldots,\gamma_n)$ and $(\beta'_0,\beta'_1,\ldots,\beta'_{2n+1})$ are elements of $\alpha^{<\omega}$, and the functions $f_{(1,n)}$, $g_n$, $f_{(2,n)}$ and $f_{\theta_n}$ are well defined; it is also easy to check that $\sigma((\beta_0,\beta_1,\ldots,\beta_n))$ is a valid move.\\
Let us show that $\cup_{n<\omega}f_{\theta_n}$ is a partial isomorphism. It is clear that $rang(f_{(2,n)})\subseteq rang(f_{(1,n+1)})$. By 3 and 4 in the induction, we can conclude that $rang(f_{(2,n)})$ is a subset of $dom(g_{n+1})$. Then 
$rang (\cup_{n<\omega}(f_{(2,n)}))\subseteq dom (\cup_{n<\omega}(g_n))$, so $$\cup_{n<\omega}(g_n \circ (f_{(2,n)}\restriction_{f_{(2,n)}^{-1}[dom(g_n)]}))=\cup_{n<\omega}(g_n) \circ \cup_{n<\omega}(f_{(2,n)}).$$ Since $\sigma_1$ and $\sigma_2$ are winning strategies, we know that $\cup_{n<\omega}(g_n)$ and $\cup_{n<\omega}(f_{(2,n)})$ are partial isomorphism. Therefore $\cup_{n<\omega}f_{\theta_n}$ is a partial isomorphism and $\sigma$ is a winning strategy for $\bf{II}$ on the game EF$^\kappa_\omega (\mathcal{A}_\eta\restriction_\alpha,\mathcal{A}_\zeta\restriction_\alpha)$.
\end{proof}
Assume $T$ is a classifiable theory. We can conclude from the previous results that, $\eta\cong_T\xi$ if and only if $\eta\ R^\alpha_{\text{EF}}\ \xi$ for $\mu$-club many $\alpha$.
This lead us to the main result of this section, $\cong_T$ is continuously reducible to $E^\kappa_{\mu\text{-club}}$ for any $\mu$ when $T$ is classifiable.
\begin{teorema}
Assume $T$ is a classifiable theory and $\mu<\kappa$ a regular cardinal, then $\cong_T$ is continuously reducible to $E^\kappa_{\mu\text{-club}}$ ($\cong_T\ \leq_c\ E^\kappa_{\mu\text{-club}}$). 
\end{teorema}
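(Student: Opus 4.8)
The plan is to build a continuous function $F:\kappa^\kappa\to\kappa^\kappa$ so that $\eta\cong_T\xi$ holds if and only if $F(\eta)\,E^\kappa_{\mu\text{-club}}\,F(\xi)$. By the results preceding the theorem, for a classifiable $T$ we have $\eta\cong_T\xi$ exactly when $\eta\ R^\alpha_{\mathrm{EF}}\ \xi$ holds for $\mu$-club many $\alpha$. So the natural idea is to define $F(\eta)$ to encode, at each coordinate $\alpha$, enough information about $\mathcal{A}_\eta\restriction_\alpha$ that the agreement set $\{\alpha : F(\eta)(\alpha)=F(\xi)(\alpha)\}$ tracks the set of $\alpha$ for which $\eta\ R^\alpha_{\mathrm{EF}}\ \xi$. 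The cleanest way to do this is to let $F(\eta)(\alpha)$ be a canonical code for the isomorphism type of $\mathcal{A}_\eta\restriction_\alpha$ (together with a flag recording whether $\mathcal{A}_\eta\restriction_\alpha\models T$), chosen so that two restrictions get the same code precisely when they are related by the winning-strategy relation.

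\textbf{Choosing the coding and restricting to the good club.}
First I would fix, via a bijection $\kappa^{<\kappa}\to\kappa$ (or the bijection $\pi$ already in play), a way to assign to each structure $\mathcal{A}_\eta\restriction_\alpha$ an ordinal $c(\eta,\alpha)<\kappa$ that serves as an invariant for the equivalence class of $\mathcal{A}_\eta\restriction_\alpha$ under ${\bf II}\uparrow\mathrm{EF}^\kappa_\omega(-,-)$. Here I use the earlier lemma guaranteeing club-many $\alpha$ on which $R^\alpha_{\mathrm{EF}}$ is a genuine equivalence relation: on that club $C$ the relation "${\bf II}$ wins the restricted game" partitions the structures, so a well-defined invariant exists. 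For $\alpha\in C$ I set $F(\eta)(\alpha)$ to be $0$ if $\mathcal{A}_\eta\restriction_\alpha\not\models T$ and otherwise a fixed nonzero ordinal coding the $R^\alpha_{\mathrm{EF}}$-class of $\eta$; for $\alpha\notin C$ I set $F(\eta)(\alpha)=0$. Because $C$ is a $\mu$-club (intersecting $C$ with a $\mu$-club if necessary), the agreement of $F(\eta)$ and $F(\xi)$ on a $\mu$-club is equivalent to $\eta\ R^\alpha_{\mathrm{EF}}\ \xi$ holding on a $\mu$-club, which by the discussion before the theorem is equivalent to $\eta\cong_T\xi$.

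\textbf{Verifying the equivalence and continuity.}
For the forward direction, if $\eta\cong_T\xi$ then ${\bf II}\uparrow\mathrm{EF}^\kappa_\omega(\mathcal{A}_\eta,\mathcal{A}_\xi)$, so by the Corollary the restricted game is won by ${\bf II}$ for $\mu$-club many $\alpha$; intersecting with $C$ gives a $\mu$-club on which $F(\eta)(\alpha)=F(\xi)(\alpha)$, hence $F(\eta)\,E^\kappa_{\mu\text{-club}}\,F(\xi)$. Conversely, agreement on a $\mu$-club forces $\eta\ R^\alpha_{\mathrm{EF}}\ \xi$ on a $\mu$-club, and this is exactly the characterization of $\cong_T$. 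For continuity, the key point is that the value $F(\eta)(\alpha)$ depends only on $\mathcal{A}_\eta\restriction_\alpha$, and this in turn depends only on finitely many (indeed boundedly many) coordinates of $\eta$ below some bound determined by $\alpha$ and the bijection $\pi$; thus a basic open neighborhood of $\eta$ determined by a long enough initial segment fixes $F(\eta)$ on a correspondingly long initial segment, which is precisely continuity in $\kappa^\kappa$.

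\textbf{Main obstacle.}
The delicate step is arranging the invariant $c(\eta,\alpha)$ so that it is simultaneously \emph{canonical} (same value for $R^\alpha_{\mathrm{EF}}$-equivalent structures, different values for inequivalent ones) and \emph{locally computed} (so that continuity survives). The existence of a well-defined class invariant is underwritten by the lemma that $R^\alpha_{\mathrm{EF}}$ is an equivalence relation for $\alpha\in C$, but one must check that a representative or code for each class can be read off from the bounded data $\mathcal{A}_\eta\restriction_\alpha$ within $\kappa$-many values and without reference to $\eta$ on coordinates $\geq$ the relevant bound. Verifying this boundedness — that determining whether ${\bf II}$ wins $\mathrm{EF}^\kappa_\omega(\mathcal{A}_\eta\restriction_\alpha,\mathcal{A}_\xi\restriction_\alpha)$ uses only the parts of $\eta,\xi$ coding atomic data on the domain $\alpha$ — is where the continuity argument must be pinned down carefully.
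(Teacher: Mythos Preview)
Your approach is essentially the paper's: define $F(\eta)(\alpha)$ to be a nonzero code for the $R^\alpha_{\mathrm{EF}}$-class of $\mathcal{A}_\eta\restriction_\alpha$ when $\alpha$ lies in the good club and $\mathcal{A}_\eta\restriction_\alpha\models T$, and $0$ otherwise; then argue the two directions and continuity. The paper additionally restricts to $\mathrm{cf}(\alpha)=\mu$, but this is cosmetic.

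One small gap in your forward direction: you write ``if $\eta\cong_T\xi$ then ${\bf II}\uparrow\mathrm{EF}^\kappa_\omega(\mathcal{A}_\eta,\mathcal{A}_\xi)$,'' but $\cong_T$ by definition also holds when \emph{neither} structure models $T$, and in that case there is no reason ${\bf II}$ wins the game. The paper treats this case separately: if $\mathcal{A}_\eta\not\models T$ and $\mathcal{A}_\xi\not\models T$, then some $\varphi\in T$ fails in both, hence fails in both restrictions on a club, so $F(\eta)(\alpha)=F(\xi)(\alpha)=0$ on a club. Your definition of $F$ already makes this work; you just need to say it. Symmetrically, in the converse you should note that agreement on a $\mu$-club with common value $0$ (inside $C$) forces both $\mathcal{A}_\eta,\mathcal{A}_\xi\not\models T$, since a model of $T$ has club-many $\alpha$ with $\mathcal{A}\restriction_\alpha\models T$. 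With these two remarks added, your argument matches the paper's.
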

\begin{proof}
Shelah proved \cite{Sh}, that if $T$ is classifiable then every two models of $T$ that are $L_{\infty , \kappa}$-equivalent are isomorphic. But $L_{\infty , \kappa}$-equivalent is equivalent to EF$^\kappa_\omega$-equivalence. In other words, if $T$ is classifiable then $\bf{II}\uparrow$ EF$^\kappa_\omega (\mathcal{A},\mathcal{B})\Longleftrightarrow \mathcal{A}\cong \mathcal{B}$. This game is a determined game, so $\bf{I}\uparrow$ EF$^\kappa_\omega (\mathcal{A},\mathcal{B})\Longleftrightarrow \mathcal{A}\ncong \mathcal{B}$.\\ \\
Define the reduction $\mathcal{F}:\kappa^\kappa\rightarrow\kappa^\kappa$ as follows,
$$
\mathcal{F}(\eta)(\alpha) = \begin{cases} f_\eta(\alpha) &\mbox{if } cf(\alpha)=\mu, \mathcal{A}_\eta\restriction_\alpha\models T \text{ and } R_{EF}^\alpha\text{ is an equivalence relation}\\
0 & \mbox{in other case } \end{cases} 
$$
where $f_\eta(\alpha)$ is a code in $\kappa\backslash \{0\}$ for the $R_{EF}^\alpha$ equivalence class of $\mathcal{A}_\eta\restriction_\alpha$\\ \\
First, we will show that $\mathcal{F}(\eta)\ E^\kappa_{\mu\text{-club}}\ \mathcal{F}(\xi)$ implies $\eta\cong_T\xi$. Assume $\eta$ and $\xi$ are such that $\mathcal{F}(\eta)\ E^\kappa_{\mu\text{-club}}\ \mathcal{F}(\xi)$.\\
It is known that if $\mathcal{A}$ is a model of $T$, then the set $\{\alpha<\kappa : \mathcal{A}\restriction_\alpha\models T\}$ contains a club. 
Therefore, if there are $\mu$-club many $\alpha$ such that $\mathcal{F}(\eta)(\alpha)=0$, then $\mathcal{A}_\eta\not\models T$, otherwise we will have a club disjoint to a $\mu$-club. So, if there are $\mu$-club many $\alpha$ satisfying $\mathcal{F}(\eta)(\alpha)=\mathcal{F}(\xi)(\alpha)=0$, then $\mathcal{A}_\eta\not\models T$ and $\mathcal{A}_\xi\not\models T$, giving us $\eta\cong_T \xi$.\\
On the other hand, if there are $\mu$-club many $\alpha$ satisfying $\mathcal{F}(\eta)(\alpha)=\mathcal{F}(\xi)(\alpha)\neq 0$, then there are $\mu$-club many $\alpha$ such that $\mathcal{A}_\eta\restriction_\alpha\models T$ and $\mathcal{A}_\xi\restriction_\alpha\models T$ and thus $\mathcal{A}_\eta\models T$ and $\mathcal{A}_\xi\models T$. Since there are $\mu$-club many $\alpha$ such that $\mathcal{A}_\eta\restriction_\alpha\models T$, $\mathcal{A}_\xi\restriction_\alpha\models T$ and $\bf{II}\uparrow$ EF$^\kappa_\omega(\mathcal{A}_\eta\restriction_\alpha,\mathcal{A}_\xi\restriction_\alpha)$, then by Corollary 2.5, $\bf{II}\uparrow$ EF$^\kappa_\omega (\mathcal{A}_\eta,\mathcal{A}_\xi)$ and $\eta\cong_T \xi$.\\ \\
To show that $\eta\cong_T\xi$ implies $\mathcal{F}(\eta)\ E^\kappa_{\mu\text{-club}}\ \mathcal{F}(\xi)$, assume  that $\eta$ and $\xi$ are such that $\eta\cong_T\xi$.\\
For the case when $\mathcal{A}_\eta\models T$, it is clear that $\mathcal{A}_\xi\models T$. We will show the existence of a $\mu$-club, such that for every element $\alpha$ of it, $f_\eta(\alpha)=f_\xi(\alpha)$. 
Notices that $\mathcal{A}_\eta\restriction_\alpha$ and $\mathcal{A}_\xi\restriction_\alpha$ are models of $T$ for club many $\alpha$. Since $\bf{II}\uparrow$ EF$^\kappa_\omega (\mathcal{A}_\eta,\mathcal{A}_\xi)\Longleftrightarrow \mathcal{A}_\eta\cong_T \mathcal{A}_\xi$, by Corollary 2.5 there are $\mu$-club many $\alpha$ such that $\bf{II}\uparrow$ EF$^\kappa_\omega(\mathcal{A}_\eta\restriction_\alpha,\mathcal{A}_\xi\restriction_\alpha)$, what is the same as $\eta\ R_{EF}^\alpha\ \xi$. Therefore, by Lemma 2.7 there is a $\mu$-club, such that for every $\alpha$ in it $f_\eta(\alpha)=f_\xi(\alpha)$.\\
For the case when $\mathcal{A}_\eta\not\models T$, since we assumed $\eta\cong_T \xi$, then $\mathcal{A}_\xi\not\models T$. There is $\varphi\in T$ such that $\mathcal{A}_\eta\models \neg\varphi$ and $\mathcal{A}_\xi\models \neg\varphi$. Therefore, there are club many $\alpha$ such that $\mathcal{A}_\eta\restriction_\alpha\models \neg\varphi$ and $\mathcal{A}_\xi\restriction_\alpha\models \neg\varphi$, in particular exist club many $\alpha$ such that $\mathcal{F}(\eta)(\alpha)=\mathcal{F}(\xi)(\alpha)=0$.\\ \\
To show that $\mathcal{F}$ is continuous, let $[\eta\restriction_\alpha]$ be a basic open set and $\xi\in \mathcal{F}^{-1}[[\eta\restriction_\alpha]]$. Let $\pi$ be the bijection in Definition 2.1, since $\kappa$ is regular, $sup\{\pi(a)|a\in \omega\times\alpha^{<\omega}\}<\kappa$. Therefore, there is $\beta>\alpha$ such that for every $\gamma<\alpha$ holds $$(a_1,a_2,\ldots , a_n)\in P_m^{\mathcal{A}_\xi\restriction_{\gamma}}\Leftrightarrow n=g_1^{-1}(m) \text{ and }\xi\restriction_\beta(\pi(m,a_1,a_2,\ldots,a_n))>0.$$
Then, for every $\zeta\in [\xi\restriction_\beta]$ and $\gamma<\alpha$, $\mathcal{A}_\xi\restriction_{\gamma}$ and $\mathcal{A}_\zeta\restriction_{\gamma}$ are isomorphic. So for every $\zeta\in [\xi\restriction_\beta]$, $f_\xi(\gamma)=f_\zeta(\gamma)$ for every $\gamma<\alpha$. We conclude that $[\xi\restriction_\beta]\subseteq \mathcal{F}^{-1}[[\eta\restriction_\alpha]]$ and $\mathcal{F}$ is continuous.
\end{proof}


\section{Stable Unsuperstable Theories}
A set $X\subset \kappa^\kappa$ is $\Sigma_1^1$ if it is the projection of a Borel set $C\subset \kappa^\kappa\times\kappa^\kappa$, notice that $\kappa^\kappa\times \kappa^\kappa$ is homeomorphic to $\kappa^\kappa$. Let $X\in\{\lambda^\kappa|1<\lambda\leq \kappa\}$ and we think this as subspaces of $\kappa^\kappa$. We say that an equivalence relation $E$ on $X$ is $\Sigma_1^1$-complete, if it is $\Sigma_1^1$ (as a subset of $\kappa^\kappa\times\kappa^\kappa$) and for every $\Sigma_1^1$-equivalence relation $F$ on a space $Y\in\{\lambda^\kappa|1<\lambda\leq \kappa\}$, there is a Borel reduction $F\leq_B E$.\\ \\
On the works \cite{FHK14}, \cite{FHK13} and \cite{HK}, the relation $E_{\mu\text{-club}}^\lambda$, $1<\lambda\leq \kappa$, has been studied on the closed subspaces $\lambda^\kappa$, with $\lambda<\kappa$ and the relative subspace topology. The relation $E_{\mu\text{-club}}^\lambda$ on the subspace $\lambda^\kappa$ is defined as: we say that $f,g\in \lambda^\kappa$ are $E^\lambda_{\mu\text{-club}}$ equivalent ($f\ E^\gamma_{\mu\text{-club}}\ g$) if the set $\{\alpha<\kappa|f(\alpha)=g(\alpha)\}$ contains a $\mu$-club. For these relations the following results are known:\\ \\
{\bf Theorem.} \textit{(\cite{FHK13}) If a first order theory $T$ is classifiable, then for all regular $\mu<\kappa$, $E^2_{\mu\text{-club}} \nleq_B \cong_T$.}\\ \\
{\bf Theorem.} \textit{(\cite{FHK13}) Suppose that $\kappa=\lambda^+=2^\lambda$ and $\lambda^{<\lambda}=\lambda$.}
\begin{itemize}
\item \textit{If $\kappa>2^\omega$ and $T$ is a first order theory, then $T$ is classifiable if and only if for all regular $\mu<\kappa$, $E^2_{\mu\text{-club}} \nleq_B \cong_T$.}
\item \textit{If $T$ is unstable then $E^2_{\lambda\text{-club}} \leq_c \cong_T$.}
\end{itemize}
{\bf Theorem.} \textit{(\cite{HK}) Assume $V=L$. Suppose $\kappa>\omega$.}
\begin{itemize}
\item \textit{If $\kappa=\lambda^+$, then for every regular cardinal $\mu$, the equivalence relation $E^\lambda_{\mu\text{-club}}$ is $\Sigma_1^1$-complete.}
\item \textit{If $\kappa$ is inaccessible, then for every regular cardinal $\mu$, the equivalence relation $E^\kappa_{\mu\text{-club}}$ is $\Sigma_1^1$-complete.}
\end{itemize}
{\bf Theorem.} \textit{(\cite{FHK14}) Suppose $T$ is a classifiable and shallow theory and $\kappa>2^\omega$, then for all regular $\mu<\kappa$, $\cong_T\ \leq_B\ E_{\mu\text{-club}}^\kappa$.}\\ \\
Some of the results are specifically for some fix theory. Let $\alpha$ be a countable ordinal, define $T_{\alpha}=Th((\omega^{\alpha},R_\beta)_{\beta<\alpha})$, where $\eta\ R_\beta\ \xi$ holds if $\eta\restriction_{\beta}=\xi\restriction_{\beta}$. \\ \\
{\bf Theorem.} \textit{(\cite{HK}) Assume $V=L$. If $\kappa=\lambda^+$ and $\lambda$ is regular, then $E_{\omega\text{-club}}^\lambda\leq_B \cong_{T_{\omega+\omega}}$.} \\ \\
We are going to continue with this work, reducing $E_{\omega\text{-club}}^\kappa$ to some other equivalence relations and generalize some of these results. We will use similar ideas as the ones used on \cite{FHK14}, \cite{FHK13} and \cite{HK}.
\begin{teorema} (\cite{FHK13})
Suppose for all $\gamma<\kappa$, $\gamma^\omega<\kappa$ and $T$ is a stable unsuperstable theory. Then $E^2_{\omega\text{-club}}\leq_c \cong_T$.
\end{teorema}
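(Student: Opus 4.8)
The plan is to send each $f\in 2^\kappa$ to a model $\mathcal{A}_f\models T$ whose isomorphism type records $f$ exactly up to $\omega$-club, exploiting the infinite forking chains that witness non-superstability. The model-theoretic input is the standard characterization: since $T$ is unsuperstable, there is a formula sequence and a tree-indiscernible family of finite tuples $(\bar a_\eta)_{\eta\in\kappa^{\le\omega}}$ such that along every branch the type of $\bar a_\eta$ forks over its proper initial segments. Consequently, to every subtree $t\subseteq\kappa^{\le\omega}$ closed under initial segments one can attach a model $M_t\models T$ of size $|t|+\kappa$, namely a Skolem/Ehrenfeucht--Mostowski hull of $\{\bar a_\eta\mid\eta\in t\}$; the point is that the relevant isomorphism invariant of $M_t$ is \emph{which} $\omega$-branches of $t$ are realized, the forking along a branch making its presence detectable.

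Next I would do the coding. Set $S=\{\alpha<\kappa\mid cf(\alpha)=\omega\}$ and fix for each $\alpha\in S$ a strictly increasing cofinal $\omega$-sequence converging to $\alpha$. Given $f\in 2^\kappa$, build a tree $t_f\subseteq\kappa^{\le\omega}$ in which, for each $\alpha\in S$, a designated $\omega$-branch ``pointing at $\alpha$'' is included precisely when $f(\alpha)=1$. Let $\mathcal{A}_f=M_{t_f}$, coded as an element of $\kappa^\kappa$ via Definition 2.1. The hypothesis $\gamma^\omega<\kappa$ for all $\gamma<\kappa$ is used exactly here: it guarantees that below any $\beta<\kappa$ only $<\kappa$ many $\omega$-branches can appear, so $|t_f|=\kappa$, $\mathcal{A}_f$ has universe $\kappa$, and the code of any bounded piece of $\mathcal{A}_f$ depends only on $f\restriction\beta$ for some $\beta<\kappa$; this last point yields continuity of $f\mapsto\mathcal{A}_f$ by the same local argument as in the proof of Theorem 2.8.

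For the reduction property, the easy direction is $f\,E^2_{\omega\text{-club}}\,g\Rightarrow\mathcal{A}_f\cong\mathcal{A}_g$. If $f$ and $g$ agree on an $\omega$-club $C$, then since $C$ is closed under $\omega$-limits the branches indexed at limit points of $C$ match up in $t_f$ and $t_g$, and a back-and-forth running along $C$ builds an isomorphism; stability enters through non-forking amalgamation, which lets each finite partial isomorphism be extended. The converse, $\mathcal{A}_f\cong\mathcal{A}_g\Rightarrow f\,E^2_{\omega\text{-club}}\,g$, is the main obstacle. One must show that the realized-branch pattern of $t_f$ is an isomorphism invariant modulo $\omega$-club: from an isomorphism, together with the $\mathrm{EF}^\kappa_\omega$/club machinery of Section~2, one extracts an $\omega$-club $D$ on which the patterns of $t_f$ and $t_g$ coincide, whence $\{\alpha\mid f(\alpha)=g(\alpha)\}\supseteq D\cap S'$ for a suitable $\omega$-club. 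Making this precise is the technical heart: it requires a pressing-down (Fodor) argument on $S$ controlled by the counting bound $\gamma^\omega<\kappa$, and, crucially, the stability of $T$ to ensure that an isomorphism cannot create or destroy an $\omega$-branch except on a non-stationary set, since the forking calculus pins each $\omega$-branch to a specific realized type. I expect all the combinatorial bookkeeping around the tree $t_f$ to be routine once this forking-invariance of $\omega$-branches is established.
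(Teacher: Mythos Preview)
The paper does not prove this theorem at all: Theorem~3.1 is simply quoted from \cite{FHK13} and used as a black box (it is the input to Lemma~3.2 and Corollary~3.4). So there is no ``paper's own proof'' to compare your proposal against.

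That said, your outline is broadly the right one and is in the spirit of the argument in \cite{FHK13}: tree-indiscernibles witnessing unsuperstability, Ehrenfeucht--Mostowski models over subtrees of $\kappa^{\le\omega}$, and a coding of $f\in 2^\kappa$ by selecting $\omega$-branches. Two cautions. First, your appeal to ``the $\mathrm{EF}^\kappa_\omega$/club machinery of Section~2'' for the non-isomorphism direction is misplaced: that machinery is tailored to \emph{classifiable} theories (where $\mathrm{EF}^\kappa_\omega$-equivalence implies isomorphism), and it does not help you read the branch pattern off an arbitrary model of a stable unsuperstable $T$. Second, the sentence ``I expect all the combinatorial bookkeeping \ldots\ to be routine once this forking-invariance of $\omega$-branches is established'' is exactly where the real work lies. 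In the actual proof one does not merely attach a branch at each $\alpha$ of cofinality $\omega$; one builds the trees so that \emph{filtrations} of the two models can be compared, and the invariant recovered from an isomorphism is obtained via a closure/filtration argument (compare the coloured-tree construction and Lemma~4.4 later in this paper, which is the $\kappa^\kappa$ analogue). Without that, an isomorphism could in principle shuffle branches in a way your sketch does not control. So your plan is sound as a high-level strategy, but the hard implication is not yet a proof.
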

Given an equivalence relation $E$ on $X$ it is natural to think on a $\lambda$-product relation of it for any $0<\lambda<\kappa$. The $\lambda$-product relation $\Pi_{\lambda}E$, is the relation defined on $X^\lambda\times X^\lambda$ as, $f\ \Pi_{\lambda}E\ g$ if $f_\gamma\ E\ g_\gamma$ holds for every $\gamma<\lambda$, where $f=(f_\gamma)_{\gamma<\lambda}$ and $g=(g_\gamma)_{\gamma<\lambda}$. We will work on the space $(2^\kappa)^\lambda$, with the box topology on $(2^\kappa)^\lambda$, the topology generated by the basic open sets $\{\Pi_{\alpha<\lambda}\mathcal{O}_\alpha|\forall\alpha<\lambda(\mathcal{O}_\alpha\text{ is an open set in }2^\kappa)\}$.\\ \\
{\bf Remark.} If there exists a cardinal $\lambda<\kappa$ such that $\kappa=2^\lambda$, the relations $E^\kappa_{\mu\text{-cub}}$ and $\Pi_{\lambda}E^2_{\mu\text{-cub}}$ are bireducible. \\
Let $G$ be a bijection between $\kappa$ and $2^\lambda$. Define $\mathcal{F}:\kappa^\kappa\rightarrow(\kappa^\kappa)^\lambda$, by $\mathcal{F}(f)=(f_\gamma)_{\gamma<\lambda}$, where $f_\gamma(\alpha)=G(f(\alpha))(\gamma)$ for every $\gamma<\lambda$ and $\alpha<\kappa$. $\mathcal{F}$ is a reduction of $E^\kappa_{\mu\text{-cub}}$ to $\Pi_{\lambda}E^2_{\mu\text{-cub}}$. Clearly for every pair of function $f$ and $g$ in $\kappa^\kappa$, $f(\alpha)=g(\alpha)$ implies $G(f(\alpha))=G(g(\alpha))$ and $f_\gamma(\alpha)=g_\gamma(\alpha)$ for every $\gamma<\lambda$. Therefore, if $f$ and $g$ coincide in a $\mu$-club, then for all $\gamma<\lambda$, $f_\gamma$ and $g_\gamma$ coincide in the same $\mu$-club. For the other direction, assume that $f_\gamma$ and $g_\gamma$ coincide in a $\mu$-club for every $\gamma<\lambda$. Since the intersection of less than $\kappa$ $\mu$-club sets is a $\mu$-club set, then there is a $\mu$-club $C$, in which the functions $f_\gamma$ and $g_\gamma$ coincide for every $\gamma<\lambda$. Therefore $G(f(\alpha))(\gamma)=G(g(\alpha))(\gamma)$ for every $\gamma<\lambda$ and every $\alpha\in C$. So $G(f(\alpha))=G(g(\alpha))$ for every $\alpha\in C$ and since $G$ is a bijection, we can conclude that $f(\alpha)=g(\alpha)$ for every $\alpha\in C$.\\ 
The other reduction is proved in \cite{FHK14}.\\ \\
A nice example of a stable unsuperstable theory is $T_\omega$. Under the assumptions of Theorem 3.1, $E^2_{\omega\text{-cub}}\ \leq_c\ \cong_{T_\omega}$. This and the reducibility of $E^\kappa_{\mu\text{-cub}}$ to $\Pi_{\lambda}E^2_{\mu\text{-cub}}$ lead us to our first reduction related to stable unsuperstable theories.
\begin{lema}
Suppose that for all $\gamma<\kappa$, $\gamma^\omega<\kappa$ and $\kappa=2^\lambda$. Then $E^\kappa_{\omega\text{-cub}}\ \leq_c\ \cong_{T_\omega}$.
\end{lema}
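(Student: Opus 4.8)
The plan is to factor the desired continuous reduction as a chain $E^\kappa_{\omega\text{-cub}}\ \leq_c\ \Pi_\lambda E^2_{\omega\text{-cub}}\ \leq_c\ \cong_{T_\omega}$. The first arrow is the map $\mathcal{F}$ already exhibited in the Remark (this is where $\kappa=2^\lambda$ is used); I would first check that $\mathcal{F}$ is continuous for the box topology. Since $\mathcal{F}(f)_\gamma(\alpha)=G(f(\alpha))(\gamma)$, every output coordinate depends only on the single input value $f(\alpha)$, and a basic box-open set constrains fewer than $\kappa$ of the pairs $(\gamma,\alpha)$; as $\kappa$ is regular and $\lambda<\kappa$, the resulting constraint on $f$ involves fewer than $\kappa$ coordinates, so its $\mathcal{F}$-preimage is open. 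Thus it suffices to build a continuous reduction $\Pi_\lambda E^2_{\omega\text{-cub}}\ \leq_c\ \cong_{T_\omega}$.

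For this, note that $T_\omega$ is stable unsuperstable and $\gamma^\omega<\kappa$ for all $\gamma<\kappa$, so Theorem 3.1 supplies a continuous $r\colon 2^\kappa\to\kappa^\kappa$ with $\mathcal{A}_{r(x)}\models T_\omega$ and $\mathcal{A}_{r(x)}\cong\mathcal{A}_{r(y)}\Leftrightarrow x\,E^2_{\omega\text{-cub}}\,y$. Applying $r$ in each coordinate gives a continuous reduction $\Pi_\lambda E^2_{\omega\text{-cub}}\ \leq_c\ \Pi_\lambda\cong_{T_\omega}$ (the preimage of a box-basic-open is the box product of the open preimages). Hence the whole problem reduces to the self-product statement $\Pi_\lambda\cong_{T_\omega}\ \leq_c\ \cong_{T_\omega}$: I must combine $\lambda$ models of $T_\omega$ into one, continuously and so that each coordinate is recoverable up to isomorphism.

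The engine for the combination is the structure of $T_\omega$: the relation $R_1$ is definable, each of its classes becomes a model of $T_\omega$ after reindexing $(R_2,R_3,\dots)$ as $(R_1,R_2,\dots)$, and two models of $T_\omega$ are isomorphic exactly when the multisets of isomorphism types of their $R_1$-classes agree (the axioms force infinitely many $R_1$-classes, so these multisets are always infinite). Given $(N_\gamma)_{\gamma<\lambda}$, I would partition the domain $\kappa$ into $\lambda$ disjoint blocks each of size $\kappa$ and define $M$ so that its $R_1$-classes are models $P_\gamma$ ($\gamma<\lambda$), where inside $P_\gamma$ I place the $R_1$-classes of $N_\gamma$ together with a tag built from a fixed family $\{D_\gamma\}_{\gamma<\lambda}$ of pairwise non-isomorphic marker models, chosen from a reserved family of $T_\omega$-types that a fixed, isomorphism-invariant shielding of the payload provably avoids. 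Reading off, for each top-level class, its marker then recovers the index $\gamma$, and deleting the markers recovers the multiset of $R_1$-classes of $N_\gamma$, i.e.\ the isomorphism type of $N_\gamma$; this is what makes the coding coordinate-faithful. Continuity is immediate because each output value of $M$ depends on only finitely many input values from the relevant block (the tags being a fixed part of the output), and $M\models T_\omega$ because the construction preserves the refining sequence of equivalence relations with infinite splitting at every node.

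The main obstacle is precisely this combination step, for two competing reasons. First, $\cong_{T_\omega}$ only sees the \emph{multiset} of $R_1$-class types, which by itself forgets the coordinate; coordinate-faithfulness must be forced by the tags, so the markers $\{D_\gamma\}$ have to be pairwise non-isomorphic \emph{and} kept disjoint from every type the shielded payload can produce, so that the monoid of $T_\omega$-types under the operation $\sqcup$ of unioning $R_1$-class families lets one decode the index and the payload uniquely from each top-level class. Second, the axioms of $T_\omega$ demand infinitely many successors at every node, which rules out naive ``one tag class, one payload class'' packagings and forces the filler-and-marker bookkeeping above. Once the coding is pinned down, verifying the continuity of $\mathcal{F}$, the coordinatewise step, and the combination, together with $M\models T_\omega$, is routine.
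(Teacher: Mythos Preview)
Your overall plan matches the paper's: pass from $E^\kappa_{\omega\text{-cub}}$ to $\Pi_\lambda E^2_{\omega\text{-cub}}$ via the Remark, apply Theorem~3.1 in each coordinate, and then combine $\lambda$ labelled models of $T_\omega$ into a single one using per-coordinate markers. The gap is in the combination step. You tag each top-level block with a single marker $D_\gamma$ and rely on an ``isomorphism-invariant shielding'' of the payload so that no payload piece can ever be confused with a marker; but you never construct this shielding, and without it you cannot argue that an isomorphism of the combined models respects coordinates. This is exactly the obstacle you yourself flag as the main one, and it is left unresolved in your sketch.

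The paper handles it by a simpler device that makes shielding unnecessary. Instead of one marker copy per block, it places $\omega$ copies of the fixed marker $\mathcal{A}_\gamma$ together with one copy of the payload $\mathcal{A}_{F(f_\gamma)}$, indexed by $\beta\in\omega+1$ with the payload sitting at $\beta=\omega$. An isomorphism of the combined models permutes these $\omega+1$ slots within a top-level block; since at most one slot on each side is a payload, some marker slot must land on a marker slot, which forces $\mathcal{A}_\gamma\cong\mathcal{A}_\alpha$ and hence $\gamma=\alpha$ by the pairwise non-isomorphism of the markers. Inside the $\gamma$-block the same count then gives either payload-to-payload directly, or a swap in which both payloads are isomorphic to $\mathcal{A}_\gamma$ and therefore to each other. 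No hypothesis on the payload types is required, so the coordinate-faithfulness comes for free and the whole shielding apparatus can be dropped.
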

\begin{proof}
By the previous remark it is enough to prove $\Pi_{\lambda}E^2_{\omega\text{-club}}\ \leq_c\  \cong_{T_\omega}$. Let $(\mathcal{A}_{\alpha})_{\alpha<\lambda}$ be pairwise non isomorphic models of $T_\omega$ with universe $\kappa$. Let $F$ be a continuous reduction of $E^2_{\omega\text{-cub}}$ to $\cong_{T_\omega}$.\\
For every $f=(f_\gamma)_{\gamma<\lambda}\in (2^\kappa)^\lambda$ we will define the model $\mathcal{A}^f$, with domain $\lambda\times (\omega+1)\times\kappa$. The interpretation of the relation $R_0^{\mathcal{A}^f}$ is the following, $(\gamma_1,\beta_1,\alpha_1)\ R_0^{\mathcal{A}^f}\ (\gamma_2,\beta_2,\alpha_2)$ if and only if $\gamma_1=\gamma_2$. The interpretation of the relations $R_{i}^{\mathcal{A}^f}$ (for $0<i$) is the following, $(\gamma_1,\beta_1,\alpha_1)\ R_i^{\mathcal{A}^f}\ (\gamma_2,\beta_2,\alpha_2)$ if and only if $\gamma_1=\gamma_2$, $\beta_1=\beta_2$ and $\alpha_1\ R_i^{\mathcal{A}}\ \alpha_2$ where $\mathcal{A}=\mathcal{A}_{\gamma_1}$ if $\beta< \omega$ otherwise $\mathcal{A}=\mathcal{A}_{F(f_{\gamma_1})}$.\\
\begin{claim}
$f\ \Pi_{\lambda}E^2_{\omega\text{-club}}\ g$ if and only if $\mathcal{A}^{f}$ and $\mathcal{A}^g$ are isomorphic.
\end{claim}
\textit{Proof of the claim.} 
Let $f=(f_\gamma)_{\gamma<\lambda}$ and $g=(g_\gamma)_{\gamma<\lambda}$.
If $f\ \Pi_\lambda E^2_{\omega\text{-club}}\ g$, then $f_\gamma\  E^2_{\omega\text{-club}}\ g_\gamma$ for every $\gamma<\lambda$, therefore for every $\gamma<\lambda$ the models $\mathcal{A}_{F(f_\gamma)}$ and $\mathcal{A}_{F(g_\gamma)}$ are isomorphic. Let $H_\gamma$ be an isomorphism between $\mathcal{A}_{F(f_\gamma)}$ and $\mathcal{A}_{F(g_\gamma)}$ for every $\gamma<\lambda$, define $H:\mathcal{A}^{f}\rightarrow \mathcal{A}^{g}$ by,
$$
H(\gamma,\alpha,\beta) = \begin{cases} (\gamma,\omega, H_\gamma (\beta)) &\mbox{if } \alpha=\omega\\
(\gamma,\alpha,\beta) & \mbox{in other case. } \end{cases} 
$$
It is clear that $H$ is an isomorphism between $\mathcal{A}^{f}$ and $\mathcal{A}^{g}$.\\ \\
Assume there exists an isomorphism $H:\mathcal{A}^{f}\rightarrow \mathcal{A}^{g}$. Fix $\gamma<\lambda$, since for every $\beta_1$ and $\beta_2$ in $\omega+1$, and $\alpha_1$ and $\alpha_2$ in $\kappa$, $(\gamma,\beta_1,\alpha_1)\ R_0^{\mathcal{A}^f}\ (\gamma,\beta_2,\alpha_2)$ if and only if \\$H(\gamma,\beta_1,\alpha_1)\ R_0^{\mathcal{A}^g}\ H(\gamma,\beta_2,\alpha_2)$, then $H(\{\gamma\}\times (\omega+1)\times \kappa)\subseteq \{\alpha\}\times (\omega+1) \times \kappa$ for some $\alpha<\lambda$. Following the same argument, we can conclude that $H^{-1}(\{\alpha\}\times (\omega+1) \times \kappa)\subseteq \{\gamma\}\times (\omega+1) \times \kappa$. Therefore $\mathcal{A}^f\restriction_{\{\gamma\}\times \{n\} \times \kappa}$ and $\mathcal{A}^g\restriction_{\{\alpha\}\times \{m\} \times \kappa}$ are isomorphic for some $n,m\in\omega$, so $\mathcal{A}_{\gamma}$ and $\mathcal{A}_{\alpha}$ are isomorphic. By the way $\mathcal{A}^f$ and $\mathcal{A}^g$ were constructed, this only happens when $\gamma=\alpha$. Then $H(\{\gamma\}\times (\omega+1) \times \kappa)= \{\gamma\}\times (\omega+1) \times \kappa$.
Since $H$ is an isomorphism, either $H(\{\gamma\}\times \omega \times \kappa)= \{\gamma\}\times \omega \times \kappa$ or there is a $n<\omega$ such that $H(\{\gamma\}\times \{n\} \times \kappa)=\{\gamma\}\times \{\omega\} \times \kappa$. For the first case, we can conclude that $H(\{\gamma\}\times \{\omega\} \times \kappa)=\{\gamma\}\times \{\omega\} \times \kappa$, then $\mathcal{A}_{F(f_\gamma)}$ and $\mathcal{A}_{F(g_\gamma)}$ are isomorphic. For the second case, $\mathcal{A}^f\restriction_{\{\gamma\}\times\{n\}\times \kappa}$ and $\mathcal{A}^g\restriction_{\{\gamma\}\times\{\omega\}\times \kappa}$ are isomorphic and there is $m<\omega$ such that $H(\{\gamma\}\times \{\omega\} \times \kappa)=\{\gamma\}\times \{m\} \times \kappa$. So $\mathcal{A}^f\restriction_{\{\gamma\}\times\{\omega\}\times \kappa}$ and $\mathcal{A}^g\restriction_{\{\gamma\}\times\{m\}\times \kappa}$ are isomorphic. By the way $\mathcal{A}^f$ and $\mathcal{A}^g$ were defined, we know that $\mathcal{A}^f\restriction_{\{\gamma\}\times \{n\} \times \kappa}$ and $\mathcal{A}^g\restriction_{\{\gamma\}\times \{m\} \times \kappa}$ are isomorphic, therefore $\mathcal{A}^f\restriction_{\{\gamma\}\times\{\omega\}\times \kappa}$ and $\mathcal{A}^g\restriction_{\{\gamma\}\times\{\omega\}\times \kappa}$ are isomorphic (i.e. $\mathcal{A}_{F(f_\gamma)}$ and $\mathcal{A}_{F(g_\gamma)}$ are isomorphic). From the way $F$ was chosen we can conclude that $f_\gamma\ E^2_{\omega\text{-cub}}\ g_\gamma$. And so for all $\gamma<\lambda$, $f_\gamma\ E^2_{\omega\text{-cub}}\ g_\gamma$ and finally we conclude that $f\ \Pi_{\lambda}E^2_{\omega\text{-club}}\ g$.  \hfill $_{\square_{\text{Claim 3.3}}}$\\ \\
Let $\sigma$ be a bijection from $\lambda\times (\omega+1)\times \kappa$ to $\kappa$, let $\pi$ and $P_n$ be as in Definition 2.1. We define the reduction $\mathcal{F}:(\kappa^\kappa)^\lambda\rightarrow \kappa^\kappa$ by, 
$$
\mathcal{F}((f_\gamma)_{\gamma<\lambda})(\alpha) = \begin{cases} 1 &\mbox{if } \alpha=\pi(n,a_1,a_2) \text{ and } \mathcal{A}^f\models P_n(\sigma^{-1}(a_1),\sigma^{-1}(a_2))\\
0 & \mbox{in other case. } \end{cases} 
$$
The continuity of $\mathcal{F}$, can be proved as in the proof of Theorem 2.8.
\end{proof}
The following corollary follows from Theorem 2.8 and Lemma 3.2. 
\begin{corolario}
Suppose for all $\gamma<\kappa$, $\gamma^\omega<\kappa$ and $\kappa=2^\lambda$, $\lambda<\kappa$. If $T$ is a classifiable theory. Then $\cong_{T}\ \leq_c\ \cong_{T_\omega}$.
\end{corolario}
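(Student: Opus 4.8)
The plan is to obtain the reduction $\cong_T\ \leq_c\ \cong_{T_\omega}$ by composing the two continuous reductions already established, namely Theorem 2.8 and Lemma 3.2. The key observation driving everything is that continuous reducibility is transitive: if $F\colon\kappa^\kappa\to\kappa^\kappa$ continuously reduces $E_1$ to $E_2$ and $G\colon\kappa^\kappa\to\kappa^\kappa$ continuously reduces $E_2$ to $E_3$, then $G\circ F$ is continuous (a composition of continuous functions) and is again a reduction, since for all $x,y$ we have $E_1(x,y)\Leftrightarrow E_2(F(x),F(y))\Leftrightarrow E_3(G(F(x)),G(F(y)))$. Thus it suffices to exhibit an intermediate relation through which both reductions factor; here that relation is $E^\kappa_{\omega\text{-club}}$.

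Concretely, I would first apply Theorem 2.8 with the choice $\mu=\omega$. This is legitimate because we assume throughout that $\kappa>\aleph_0$, so $\omega$ is a regular cardinal below $\kappa$; since $T$ is classifiable by hypothesis, Theorem 2.8 yields a continuous reduction $\cong_T\ \leq_c\ E^\kappa_{\omega\text{-club}}$. Next I would invoke Lemma 3.2: its hypotheses are precisely that $\gamma^\omega<\kappa$ for all $\gamma<\kappa$ and that $\kappa=2^\lambda$ with $\lambda<\kappa$, which are exactly the standing assumptions of the corollary. Lemma 3.2 then provides a continuous reduction $E^\kappa_{\omega\text{-club}}\ \leq_c\ \cong_{T_\omega}$. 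Chaining these gives
\begin{equation*}
\cong_T\ \leq_c\ E^\kappa_{\omega\text{-club}}\ \leq_c\ \cong_{T_\omega},
\end{equation*}
and transitivity of $\leq_c$ delivers $\cong_T\ \leq_c\ \cong_{T_\omega}$, as desired.

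There is essentially no genuine obstacle in this argument, since both component reductions are already proved; the only points requiring care are the bookkeeping ones. First, one must check that the parameter $\mu=\omega$ used to apply Theorem 2.8 matches the $\omega$ appearing in $E^\kappa_{\omega\text{-club}}$ in Lemma 3.2, so that the two reductions compose along the \emph{same} intermediate relation — this is immediate with the choice $\mu=\omega$. Second, one should confirm that the hypotheses on $\kappa$ needed by Lemma 3.2 do not conflict with anything required by Theorem 2.8 (Theorem 2.8 needs only regularity of $\mu$ below $\kappa$, so there is no tension). Granting these, the corollary is a formal consequence of transitivity, which is exactly why the authors state that it ``follows from Theorem 2.8 and Lemma 3.2.''
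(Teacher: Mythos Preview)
Your proposal is correct and matches the paper's own argument exactly: the authors simply state that the corollary follows from Theorem~2.8 and Lemma~3.2, and your composition via $E^\kappa_{\omega\text{-club}}$ with $\mu=\omega$ is precisely the intended chain. There is nothing to add.
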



\section{Coloured trees}
In this section we will define the coloured trees. These trees have high $\omega+2$ and a colouring function. We will show a construction of a coloured tree, using an element of $\kappa^\kappa$ to define the colouring function. In the end these trees are going to be isomorphic if and only if their respective elements of $\kappa^\kappa$ used to construct them are $E^\kappa_{\omega\text{-cub}}$ related. This is Lemma 4.7, below, but notice that in section 5 we need more information about the trees than just this lemma.\\
The coloured trees that we will present in this section, are a variation of the trees used in \cite{HK} and \cite{FHK13} for the reduction mentioned at the beginning of the previous section.\\
For every $x\in t$ we denote by $ht(x)$ the height of $x$, the order type of $\{y\in t|y<x\}$. Define $t_\alpha=\{x\in t|ht(x)=\alpha\}$ and denote by $x\restriction_\alpha$ the unique $y\in t$ such that $y\in t_\alpha$ and $y\leq x$. An $\alpha, \beta$-tree is a tree $t$ in which every element has less than $\alpha$ immediately successors and every branch $\eta$ has order type less than $\beta$.
\begin{definicion} 
A coloured tree is a pair $(t,c)$, with $t$ is a $\kappa^+$, $(\omega+2)$-tree and $c$ is a map $c:t_\omega\rightarrow \kappa\backslash \{0\}$. 
\end{definicion}
\noindent
Two coloured trees $(t,c)$ and $(t',c')$ are isomorphic, if there is a trees isomorphism $f:t\rightarrow t'$ such that for every $x\in t_\omega$, $c(x)=c'(f(x))$.\\
Denote the set of all coloured trees by $CT^\omega$. Let $CT^\omega_*\subset CT^\omega$ be the set of coloured trees, in which every element with finite height, has infinitely many immediate successors, every maximal branch has order type $\omega+1$ and the intersection of two distinct branches is finite. Notice that for every $t\in CT^\omega_*$ and every pair $x,y\in t_{\omega}$, $x\restriction_\omega=y\restriction_\omega$ implies $x=y$.\\
We are going to work only with elements of $CT^\omega_*$, every time we mention a coloured tree, we mean an element of $CT^\omega_*$.\\
We can see every coloured tree as a downward closed subset of $\kappa^{\leq \omega}$.
\begin{definicion}
Let $(t,c)$ be a coloured tree, suppose $(I_\alpha)_{\alpha<\kappa}$ is a collection of subsets of $t$ that satisfies:
\begin{itemize}
\item for each $\alpha<\kappa$, $I_\alpha$ is a downward closed subset of $t$.
\item $\bigcup_{\alpha<\kappa}I_\alpha=t$.
\item if $\alpha<\beta<\kappa$, then $I_\alpha\subset I_\beta$.
\item if $\gamma$ is a limit ordinal, then $I_\gamma=\bigcup_{\alpha<\gamma}I_\alpha$.
\item for each $\alpha<\kappa$ the cardinality of $I_\alpha$ is less than $\kappa$.
\end{itemize}
We call $(I_\alpha)_{\alpha<\kappa}$ a filtration of $t$.
\end{definicion}
\begin{definicion}
Let $t$ be a coloured tree and $\mathcal{I}=(I_\alpha)_{\alpha<\kappa}$ a filtration of $t$. Define $H_{\mathcal{I},t}\in \kappa^\kappa$ as follows. \\
Fix $\alpha<\kappa$. Let $B_\alpha$ be the set of all $x\in t_\omega$ that are not in $I_\alpha$, but $x\restriction_n\in I_\alpha$ for all $n<\omega$.
\begin{itemize}
\item If $B_\alpha$ is non-empty and there is $\beta$ such that for all $x\in B_\alpha$, $c(x)=\beta$, then let $H_{\mathcal{I},t}(\alpha)=\beta$
\item Otherwise let $H_{\mathcal{I},t}(\alpha)=0$
\end{itemize}
\end{definicion}
We will call a filtration good if for every $\alpha$, $B_\alpha\neq \emptyset$ implies that $c$ is constant on $B_\alpha$.
\begin{lema}
Suppose $(t_0,c_0)$ and $(t_1,c_1)$ are isomorphic coloured trees, and $\mathcal{I}=(I_\alpha)_{\alpha<\kappa}$ and $\mathcal{J}=(J_\alpha)_{\alpha<\kappa}$ are good filtrations of $(t_0,c_0)$ and $(t_1,c_1)$ respectively. Then $H_{\mathcal{I},t_0}\ E^\kappa_{\omega\text{-club}}\  H_{\mathcal{J},t_1}$
\end{lema}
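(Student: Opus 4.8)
The plan is to reduce the statement to the comparison of two filtrations of a \emph{single} coloured tree, where the desired agreement follows from a standard closure argument. Let $f\colon t_0\to t_1$ be an isomorphism of coloured trees, so that $f$ is a tree isomorphism with $c_1(f(x))=c_0(x)$ for every $x\in (t_0)_\omega$. First I would push the filtration $\mathcal{I}$ forward along $f$, setting $f[\mathcal{I}]=(f[I_\alpha])_{\alpha<\kappa}$. Since $f$ is a bijective tree isomorphism, each $f[I_\alpha]$ is downward closed, the sequence is increasing and continuous at limits, $\bigcup_{\alpha<\kappa} f[I_\alpha]=t_1$, and $|f[I_\alpha]|=|I_\alpha|<\kappa$; hence $f[\mathcal{I}]$ is a filtration of $t_1$.

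The second step is to check that transporting the filtration does not change the function $H$: I claim $H_{f[\mathcal{I}],t_1}=H_{\mathcal{I},t_0}$ on all of $\kappa$. Fix $\alpha$ and let $B^{\mathcal{I}}_\alpha$ and $B^{f[\mathcal{I}]}_\alpha$ be the associated sets $B_\alpha$ from the definition of $H$. Because $f$ preserves heights and commutes with restriction to initial segments, and because $f$ is a bijection, the conditions defining membership in $B_\alpha$ (namely $x\in(t_0)_\omega$, $x\notin I_\alpha$, and $x\restriction_n\in I_\alpha$ for all $n<\omega$) are transported by $f$ to the corresponding conditions for $f[I_\alpha]$; thus $f$ restricts to a bijection $B^{\mathcal{I}}_\alpha\to B^{f[\mathcal{I}]}_\alpha$, and since $c_1\circ f=c_0$ this bijection preserves colours. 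Consequently $B^{f[\mathcal{I}]}_\alpha$ is nonempty exactly when $B^{\mathcal{I}}_\alpha$ is, $f[\mathcal{I}]$ is again a good filtration, and the common colour value agrees, giving $H_{f[\mathcal{I}],t_1}(\alpha)=H_{\mathcal{I},t_0}(\alpha)$.

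It remains to compare the two good filtrations $f[\mathcal{I}]$ and $\mathcal{J}$ of the \emph{same} tree $t_1$. I would show that $C=\{\alpha<\kappa : f[I_\alpha]=J_\alpha\}$ contains a club. This is the usual interleaving argument: given any $\alpha$, since $|f[I_\alpha]|<\kappa$, $\kappa$ is regular, and $\mathcal{J}$ is an increasing continuous cover of $t_1$, there is $\beta>\alpha$ with $f[I_\alpha]\subseteq J_\beta$, and symmetrically some $\gamma>\beta$ with $J_\beta\subseteq f[I_\gamma]$; iterating this and taking suprema at limits produces a club of closure points $\alpha$ at which $f[I_\alpha]=J_\alpha$. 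For $\alpha\in C$ the set $B_\alpha$ depends only on the single set $f[I_\alpha]=J_\alpha$, so $B^{f[\mathcal{I}]}_\alpha=B^{\mathcal{J}}_\alpha$ and hence $H_{f[\mathcal{I}],t_1}(\alpha)=H_{\mathcal{J},t_1}(\alpha)$. Combining with the previous step, $H_{\mathcal{I},t_0}(\alpha)=H_{\mathcal{J},t_1}(\alpha)$ for every $\alpha\in C$. Since $C$ contains a club and every club is in particular an $\omega$-club, the set $\{\alpha : H_{\mathcal{I},t_0}(\alpha)=H_{\mathcal{J},t_1}(\alpha)\}$ contains an $\omega$-club, which is precisely $H_{\mathcal{I},t_0}\ E^\kappa_{\omega\text{-club}}\ H_{\mathcal{J},t_1}$.

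I expect the only genuine work to lie in the club argument of the third step — verifying that the two filtrations agree on club-many $\alpha$ — together with the bookkeeping observation that $B_\alpha$ depends only on the single set $I_\alpha$ (resp.\ $J_\alpha$) and not on the remainder of the filtration. The transport of $H$ along the isomorphism in the second step is essentially formal once one records that $f$ commutes with restriction to initial segments and preserves colours.
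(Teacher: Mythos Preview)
Your proposal is correct and follows essentially the same approach as the paper: push the filtration $\mathcal{I}$ forward along the isomorphism, verify that this preserves $H$ pointwise (via the correspondence of the sets $B_\alpha$ and colour preservation), and then use the standard interleaving argument to show that the two filtrations of $t_1$ agree on a club. If anything, your treatment of the second step is slightly more careful than the paper's, since you explicitly note that $f$ induces a \emph{bijection} $B^{\mathcal{I}}_\alpha\to B^{f[\mathcal{I}]}_\alpha$, which is needed to handle the case $B^{\mathcal{I}}_\alpha=\emptyset$.
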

\begin{proof}
Let $F:(t_0,c_0)\rightarrow (t_1,c_1)$ be a coloured tree isomorphism. Define $F\mathcal{I}=(F[I_\alpha])_{\alpha<\kappa}$. It is easy to see that $F[I_\alpha]$ is a downward closed subset of $t_1$. Clearly $F[I_\alpha]\subset F[I_\beta]$ when $\alpha<\beta$ and for $\gamma$ a limit ordinal, $\cup_{\alpha<\gamma}F[I_\alpha]=F[I_\gamma]$. If $x\in t_1$ then there exists $y\in t_0$ and $\alpha<\kappa$ such that $F(y)=x$ and $y\in I_\alpha$, therefore $x\in F[I_\alpha]$ and $\cup_{\alpha<\kappa}F[I_\alpha]=t_1$. Since $F$ is an isomorphism, $|F[I_\alpha]|=|I_\alpha|<\kappa$ for every $\alpha<\kappa$. So $F\mathcal{I}$ is a filtration of $t_1$.\\ For every $\alpha$, $B_\alpha^{\mathcal{I}}\neq \emptyset$ implies that $B_\alpha^{F\mathcal{I}}\neq \emptyset$. On the other hand, $\mathcal{I}$ is a good filtration, then when $B_\alpha^\mathcal{I}\neq \emptyset$, $c_0$ is constant on $B_\alpha^{\mathcal{I}}$. Since $F$ is colour preserving, $c_1$ is constant on $B_\alpha^{F\mathcal{I}}$, we conclude that $F\mathcal{I}$ is a good filtration and $H_{\mathcal{I},t_0}(\alpha)=H_{F\mathcal{I},t_1}(\alpha)$.\\
Notice that $F[I_\alpha]=J_\alpha$ implies $H_{\mathcal{I},t_0}(\alpha)=H_{\mathcal{J},t_1}(\alpha)$. Therefore it is enough to show that $C=\{\alpha|F[I_\alpha]=J_\alpha\}$ is an $\omega$-club. By the definition of a filtration, for every sequence $(\alpha_i)_{i<\theta}$ in $C$, cofinal to $\gamma$, $J_\gamma=\bigcup_{i<\theta}J_{\alpha_i}=\bigcup_{i<\theta}F[I_{\alpha_i}]=F[I_\gamma]$, so $C$ is closed. To show that $C$ is unbounded, choose $\alpha<\kappa$. Define the succession $(\alpha_i)_{i<\omega}$ by induction. For $i=0$, $\alpha_0=\alpha$. When $n$ is odd let $\alpha_{n+1}$ be the least ordinal bigger than $\alpha_n$ such that $F[I_{\alpha_n}]\subset J_{\alpha_{n+1}}$ (such ordinal exists because $\kappa$ is regular, and $\mathcal{J}$ and $F\mathcal{I}$ are filtrations, specially $|F[I_{\alpha_n}]|<\kappa$). When $n$ is even let $\alpha_{n+1}$ be the least ordinal bigger than $\alpha_n$ such that $J_{\alpha_n}\subset F[I_{\alpha_{n+1}}]$ (such ordinal exists because $\kappa$ is regular, and $\mathcal{J}$ and $F\mathcal{I}$ are filtrations, specially $|J_{\alpha_n}|<\kappa$). Clearly $\bigcup_{i<\omega}J_{\alpha_i}=\bigcup_{i<\omega}F[I_{\alpha_i}]$ and  $\cup_{i<\omega}\alpha_i\in C$.
\end{proof}
Now we can construct the coloured trees that we need for the reduction. This construction is in essential the same used in \cite{HK}. The only difference between them is that in \cite{HK} the construction was made for successor cardinals, here we do it for inaccessible cardinals. These trees are useful for the study of the relation $E^\kappa_{\omega\text{-cub}}$.\\ \\
Order the set $\omega\times \kappa\times \kappa\times \kappa\times \kappa$ lexicographically, $(\alpha_1,\alpha_2,\alpha_3,\alpha_4,\alpha_5)>(\beta_1,\beta_2,\beta_3,\beta_4,\beta_5)$ if for some $1\leq k \leq 5$, $\alpha_k>\beta_k$ and for every $i<k$, $\alpha_i=\beta_i$. Order the set $(\omega\times \kappa\times \kappa\times \kappa\times \kappa)^{\leq \omega}$ as a tree by inclusion.\\
Define the tree $(I_f,d_f)$ as, $I_f$ the set of all strictly increasing functions from some $n\leq \omega$ to $\kappa$ and for each $\eta$ with domain $\omega$, $d_f(\eta)=f(sup(rang(\eta)))$.\\
For every pair of ordinals $\alpha$ and $\beta$, $\alpha<\beta<\kappa$ and $i<\omega$ define $$R(\alpha,\beta,i)=\bigcup_{i< j\leq \omega}\{\eta:[i,j)\rightarrow[\alpha,\beta)|\eta \text{ strictly increasing}\}.$$
\begin{definicion}
Assume $\kappa$ is an inaccessible cardinal. If $\alpha<\beta<\kappa$ and $\alpha,\beta,\gamma\neq 0$, let $\{P^{\alpha,\beta}_\gamma|\gamma<\kappa\}$ be an enumeration of all downward closed subtrees of $R(\alpha,\beta,i)$ for all $i$, in such a way that each possible coloured tree appears cofinally often in the enumeration. And the tree $P^{0,0}_0$ is $(I_f,d_f)$.\\
\end{definicion}
This enumeration is possible because $\kappa$ is inaccessible; there are at most\\ $|\bigcup_{i<\omega}\mathcal{P}(R(\alpha,\beta,i))|\leq \omega\times\kappa=\kappa$ downward closed coloured subtrees, and at most $\kappa\times \kappa^{<\kappa}=\kappa$ coloured trees.\\
Denote by $Q(P^{\alpha,\beta}_\gamma)$ the unique natural number $i$ such that $P^{\alpha,\beta}_\gamma\subset R(\alpha,\beta,i)$.
\begin{definicion}
Assume $\kappa$ is an inaccessible cardinal. 
Define for each $f\in \kappa^\kappa$ the coloured tree $(J_f,c_f)$ by the following construction.\\
For every $f\in \kappa^\kappa$ define $J_f=(J_f,c_f)$ as the tree of all $\eta: s\rightarrow \omega\times \kappa^4$, where $s\leq \omega$, ordered by extension, and such that the following conditions hold for all $i,j<s$:\\
Denote by $\eta_i$, $1<i<5$, the functions from $s$ to $\kappa$ that satisfies, $\eta(n)=(\eta_1(n),\eta_2(n),\eta_3(n),\eta_4(n),\eta_5(n))$.
\begin{enumerate}
\item $\eta\restriction_n\in J_f$ for all $n<s$.
\item $\eta$ is strictly increasing with respect to the lexicographical order on $\omega\times \kappa^4$.
\item $\eta_1(i)\leq \eta_1(i+1)\leq \eta_1(i)+1$.
\item $\eta_1(i)=0$ implies $\eta_2(i)=\eta_3(i)=\eta_4(i)=0$.
\item $\eta_1(i)<\eta_1(i+1)$ implies $\eta_2(i+1)\ge \eta_3(i)+\eta_4(i)$.
\item $\eta_1(i)=\eta_1 (i+1)$ implies $\eta_k (i)=\eta_k (i+1)$ for $k\in \{2,3,4\}$.
\item If for some $k<\omega$, $[i,j)=\eta_1^{-1}\{k\}$, then $$\eta_5\restriction_{[i,j)}\in P^{\eta_2(i),\eta_3(i)}_{\eta_4(i)}.$$
\noindent Note that 7 implies $Q(P^{\eta_2(i),\eta_3(i)}_{\eta_4(i)})=i$.
\item If $s=\omega$, then either 
\begin{itemize}
\item [(a)] there exists a natural number $m$ such that $\eta_1(m-1)<\eta_1(m)$, for every $k \ge m$ $\eta_1(k)=\eta_1(k+1)$, and the color of $\eta$ is determined by $P^{\eta_2(m),\eta_3(m)}_{\eta_4(m)}$: $$c_f(\eta)=c(\eta_5\restriction_{[m,\omega)})$$ where $c$ is the colouring function of $P^{\eta_2(m),\eta_3(m)}_{\eta_4(m)}$.\\
\end{itemize}
or
\begin{itemize}
\item [(b)] there is no such $m$ and then $c_f(\eta)=f(sup(rang(\eta_5)))$.
\end{itemize}
\end{enumerate}
\end{definicion}
\begin{lema}
Assume $\kappa$ is an inaccessible cardinal, then for every $f,g\in \kappa^\kappa$ the following holds $$f\  E^\kappa_{\omega\text{-club}}\ g \Leftrightarrow  J_f\cong J_g$$
\end{lema}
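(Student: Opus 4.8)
The plan is to prove the two implications separately, handling the backward ($\Leftarrow$) direction through the good-filtration Lemma 4.4 and the forward ($\Rightarrow$) direction by an explicit construction of an isomorphism. The observation that organises everything is that the \emph{underlying} tree of $J_f$ does not depend on $f$: conditions 1--7 of Definition 4.6 never mention $f$ and the trees $P^{\alpha,\beta}_\gamma$ are fixed once and for all in Definition 4.5, so $J_f$ and $J_g$ are literally the same tree $t$; they differ only in the colourings $c_f,c_g$, and only on the maximal branches of type 8(b), where $c_f(\eta)=f(\sup(\mathrm{rang}(\eta_5)))$. On a branch of type 8(a) the colour is read off a fixed $P^{\alpha,\beta}_\gamma$ and is therefore identical in both trees. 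Thus the entire content of the lemma lies in the type-8(b) colours.

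For the direction $J_f\cong J_g \Rightarrow f\,E^\kappa_{\omega\text{-club}}\,g$, I would equip each tree with the canonical filtration $I_\alpha=\{\eta\in t : \mathrm{rang}(\eta)\subseteq \omega\times\alpha^4\}$. Each $I_\alpha$ is downward closed, the sequence is increasing and continuous with union $t$, and $|I_\alpha|\le|\alpha|^\omega<\kappa$ because $\kappa$ is inaccessible, so this is a filtration. I would then check that it is good: if $\eta\in B_\alpha$ then every coordinate value of $\eta$ stays below $\alpha$ while the supremum of the coordinate values equals $\alpha$, and this forces $\eta$ to be of type 8(b). Indeed a type-8(a) branch eventually lives, by conditions 6 and 8(a), inside one bounded interval $[\eta_2(m),\eta_3(m))$ whose endpoint $\eta_3(m)$ is a coordinate value appearing cofinally along the branch, hence $<\alpha$, so the whole branch already lies in $I_\alpha$. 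Consequently every $\eta\in B_\alpha$ has $\sup(\mathrm{rang}(\eta_5))=\alpha$ and colour $f(\alpha)$, so $c_f$ is constant on $B_\alpha$ and $H_{\mathcal{I},J_f}(\alpha)=f(\alpha)$ whenever $B_\alpha\neq\emptyset$. Since the set of ordinals of cofinality $\omega$ that are closure points of the filtration is an $\omega$-club on which $B_\alpha\neq\emptyset$, this gives $H_{\mathcal{I},J_f}\,E^\kappa_{\omega\text{-club}}\,f$ and, symmetrically, $H_{\mathcal{J},J_g}\,E^\kappa_{\omega\text{-club}}\,g$. Applying Lemma 4.4 to an isomorphism $J_f\cong J_g$ together with these two good filtrations yields $H_{\mathcal{I},J_f}\,E^\kappa_{\omega\text{-club}}\,H_{\mathcal{J},J_g}$, and transitivity of $E^\kappa_{\omega\text{-club}}$ (the intersection of $\omega$-clubs is an $\omega$-club) gives $f\,E^\kappa_{\omega\text{-club}}\,g$.

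For the harder direction $f\,E^\kappa_{\omega\text{-club}}\,g \Rightarrow J_f\cong J_g$, fix an $\omega$-club $C$ with $f\restriction C=g\restriction C$, and build a colour-preserving automorphism $F$ of $t$ by a recursion on the finite levels, realised as an increasing union of partial isomorphisms between small downward-closed subtrees. Two features drive the construction: $C$ is closed under $\omega$-limits, and by Definition 4.5 every coloured subtree of every $R(\alpha,\beta,i)$ occurs as some $P^{\alpha',\beta'}_{\gamma'}$ cofinally often. The idea is to keep $F$ supremum-preserving on type-8(b) branches whose supremum already lies in $C$; there $c_f(\eta)=f(\sup)=g(\sup)=c_g(F(\eta))$, and the closure of $C$ under $\omega$-limits guarantees that branches assembled through points of $C$ return their supremum to $C$. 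Type-8(a) branches are matched to type-8(a) branches carrying the same fixed $P$-tree, so their structural colours are automatically preserved. The remaining branches---those of type 8(b) whose supremum falls outside $C$---are absorbed using the reservoir of colours supplied by the cofinal appearance of all coloured subtrees: at every level of cofinality $\omega$ the tree already contains, among its type-8(a) branches, branches of every colour in $\kappa\setminus\{0\}$, so a type-8(b) branch of $J_f$ of colour $f(\alpha)$ with $\alpha\notin C$ can be paired with a type-8(a) branch of $J_g$ of the same colour, and conversely in order to keep $F$ a bijection.

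I expect the main obstacle to be exactly this last point: assembling all these local choices into a single bijective, order-preserving map that verifies conditions 1--8 at every node and is colour-preserving on all of $t_\omega$. Concretely one must show that every partial isomorphism built so far extends to the next level whatever successor is exposed---this is where the ``cofinally often'' clause is indispensable, since it lets one always locate, higher up, an isomorphic copy of whatever coloured subtree is required---and that the steering of suprema into $C$ stays compatible with maintaining a bijection. The delicate feature is that this rerouting can only absorb discrepancies lying off an $\omega$-club; this is exactly consistent with the backward direction, which recovers $f$ from $J_f$ up to $E^\kappa_{\omega\text{-club}}$ and hence shows that no such construction could succeed for $f,g$ disagreeing on an $\omega$-club's worth of points.
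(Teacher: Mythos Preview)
Your backward direction is essentially the paper's argument: the canonical filtration $J_f^\alpha$ is good and recovers $f$ up to $E^\kappa_{\omega\text{-club}}$, then Lemma~4.4 finishes. One small correction: an $\eta\in B_\alpha$ need not be of type~8(b); it can be of type~8(a) with $\eta_1\equiv 0$, in which case condition~4 forces $\eta_2=\eta_3=\eta_4=0$ and $\eta_5\in P^{0,0}_0=I_f$, so $c_f(\eta)=d_f(\eta_5)=f(\sup\mathrm{rang}(\eta_5))=f(\alpha)$ anyway. Your conclusion stands, but the case split is needed (and this is exactly how the paper argues via Claim~4.8).

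For the forward direction your high-level plan (back-and-forth exploiting the cofinal recurrence of every coloured subtree among the $P^{\alpha,\beta}_\gamma$) is right, but the specific strategy you outline is not the one the paper uses, and as stated it has a gap. You propose to keep $F$ close to the identity: match 8(a) with 8(a) on the same $P$-tree, keep suprema of good 8(b) branches fixed, and swap each bad 8(b) branch with some 8(a) branch of the same colour. The problem is that branches are not independent objects: they share arbitrarily long finite initial segments, so rerouting one 8(b) branch to an 8(a) location forces the images of many neighbouring branches, and nothing guarantees those forced images are colour-correct. Your ``conversely in order to keep $F$ a bijection'' hides the same difficulty on the other side. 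You acknowledge this obstacle, but your sketch does not indicate how to overcome it.

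The paper's construction sidesteps this entirely by \emph{not} trying to preserve the 8(a)/8(b) dichotomy or the identity on any part of the tree. At each successor step it takes the entire new piece (say $J_g^{\alpha_j}\setminus\mathrm{rang}(F_{\alpha_{j-1}})$, or the corresponding set $W(\eta)$ above each already-handled $\eta$), finds a single $P^{\alpha,\theta}_\gamma$ isomorphic to it, and maps the whole piece into the block indexed by that $P$-tree, incrementing the first coordinate $\xi_1$ by exactly one. Thus every branch handled at a successor step becomes 8(a) in the image, with its colour read off the chosen $P$-tree isomorphism. The only 8(b) branches the map ever produces are those assembled at limit stages, and by the bookkeeping (condition~(a): successor $\alpha_i$ are picked just past an element of $C$) those limits lie in $C$, where $f$ and $g$ agree. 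This is the idea you are missing: rather than repairing discrepancies branch by branch, re-encode wholesale so that discrepancies can only occur at controlled limit points.
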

\begin{proof}
By Lemma 4.4, it is enough to prove the following properties of $J_f$
\begin{enumerate}
\item There is a good filtration $\mathcal{I}$ of $J_f$, such that $H_{\mathcal{I},J_f}\ E^\kappa_{\omega\text{-club}}\ f$.
\item If $f\  E^\kappa_{\omega\text{-club}}\ g$, then $J_f\cong J_g$.
\end{enumerate}
Notice that for any $k\in rang(\eta_1)$ if $\eta_5\restriction_{[i,j)}\in P^{\eta_2(i),\eta_3(i)}_{\eta_4(i)}$, when  $[i,j)=\eta_1^{-1}\{k\}$ and if $i+1<j$, then $\eta_5\restriction_{[i,j)}$ is strictly increasing. If $\eta_1(i)<\eta_1(i+1)$, by Definition 4.6 item 5, $\eta_2(i+1)\ge \eta_3(i)+\eta_4(i)$, so $\eta_5(i)<\eta_3(i)\leq \eta_2(i+1)\leq \eta_5(i+1)$. Thus $\eta_5$ is strictly increasing. If $\eta\restriction_n\in J_f$ for every $n$, then $\eta\in J_f$. Clearly every maximal branch has order type $\omega+1$, every chain $\eta\restriction_1\subset\eta\restriction_2\subset\eta\restriction_3\subseteq \cdots$ has a unique limit in the tree, and every element in a finite level has an infinite number of successors (at most $\kappa$), therefore $J_f\in CT^\omega_*$.\\
For each $\alpha<\kappa$ define $J_f^\alpha$ as $$J_f^\alpha=\{\eta\in J_f| rang(\eta)\subset \omega\times(\beta)^4\text{ for some }\beta<\alpha\}.$$
Suppose $rang(\eta_1)=\omega$. As it was mentioned before, $\eta_5$ is increasing and $sup(rang(\eta_3))\ge sup(rang(\eta_5))\ge sup(rang(\eta_2))$. By Definition 4.6 $sup(rang(\eta_2))\ge sup(rang(\eta_3))$ and $sup(rang(\eta_2))\ge sup(rang(\eta_4))$, this lead us to 
\begin{equation}
sup(rang(\eta_4))\leq sup(rang(\eta_3))=sup(rang(\eta_5))=sup(rang(\eta_2)).
\end{equation}
When $\eta\restriction_k\in J_f^\alpha$ holds for every $k\in \omega$, can be concluded that $sup(rang(\eta_5))\leq \alpha$, if in addition $\eta\notin J_f^\alpha$, then 
\begin{equation}
sup(rang(\eta_5))= \alpha.
\end{equation}
\begin{claim}
Suppose $\xi\in J_f^\alpha$ and $\eta\in J_f$. If $dom(\xi)<\omega$, $\xi\subsetneq \eta$ and for every $k$ in $dom(\eta)\backslash dom(\xi)$, $\eta_1(k)=\xi_1(max(dom(\xi)))$ and $\eta_1(k)>0$. Then $\eta\in J_f^\alpha$.
\end{claim}
\textit{Proof of the claim.} 
Assume $\xi,\eta\in J_f$ are as in the assumption. Let $\beta_i=\xi_i(max(dom(\xi)))$, for $i\in\{2,3,4\}$. Since $\xi\in J_f^\alpha$, then there exists $\beta<\alpha$ such that $\beta_2,\beta_3,\beta_4<\beta$. By Definition 4.6 item 6 for every $k\in dom(\eta)\backslash dom(\xi)$, $\eta_i(k)=\beta_i$ for $i\in \{2,3,4\}$. Therefore, by Definition 4.6 item 7 and the definition of $P^{\beta_2,\beta_3}_{\beta_4}$, we conclude $\eta_5(k)<\beta_3<\beta$, so $\eta\in J_f^\alpha$. \hfill $_{\square_{\text{Claim 4.8}}}$
\begin{claim}
$|J_f|=\kappa$, $\mathcal{J}=(J_f^\alpha)_{\alpha<\kappa}$ is a good filtration of $J_f$ and $H_{\mathcal{J},J_f}\ E^\kappa_{\omega\text{-club}}\ f$
\end{claim}
\textit{Proof of the claim.} Clearly $J_f=\cup_{\alpha<\kappa}J_f^\alpha$, $J_f^\alpha$ is a downward closed subset of $J_f$, and $J_f^\alpha\subset J_f^\beta$ when $\alpha<\beta$. Since $\kappa$ is inaccessible, we conclude $|J_f^\alpha|<\kappa$ and $|J_f|=\kappa$. Finally, when $\gamma$ is a limit ordinal
$$
\begin{array} {lcl} 
J_f^\gamma & = &\{\eta\in J_f|\exists \beta<\gamma(rang(\eta)\subset \omega\times (\beta)^4)\}\\
& = & \{\eta\in J_f|\exists\alpha<\gamma,\exists \beta<\alpha(rang(\eta)\subset \omega\times (\beta)^4)\}\\
& = & \bigcup_{\alpha<\gamma}J_f^\alpha
\end{array}
$$
Suppose $\alpha$ has cofinality $\omega$, and $\eta\in J_f\backslash J_f^\alpha$ satisfies $\eta\restriction_k\in J_f^\alpha$ for every $k<\omega$. By the previous claim, $\eta$ satisfies Definition 4.6 item 8 (a) only if $\eta_1(n)=0$ for every $n\in \omega$. So $\eta_1$, $\eta_2$, $\eta_3$ and $\eta_4$ are constant zero, and $c_f(\eta)=d_f(\eta_5)$, where $d_f$ is the colouring function of $P^{0,0}_0=I_f$, $c_f(\eta)=f(sup(rang(\eta_5)))$. When $\eta$ satisfies Definition 4.6 item 8 (b), $c_f(\eta)=f(sup(rang(\eta_5)))$.\\
In both cases, $c_f(\eta)=f(\alpha)$. Therefore, if $B_\alpha\neq \emptyset$ then $c_f$ is constant on $B_\alpha$ and $\mathcal{J}$ is a good filtration.\\
By Definition 4.3 and since $\mathcal{J}$ is a good filtration, $H_{\mathcal{J},J_f}(\alpha)=f(\alpha)$. \hfill $_{\square_{\text{Claim 4.9}}}$
\begin{claim}
If $f\ E^\kappa_{\omega\text{-club}}\ g$, then $J_f\cong J_g$.
\end{claim}
\textit{Proof of the claim.} Let $C'\subseteq \{\alpha<\kappa|f(\alpha)=g(\alpha)\}$ a $\omega$-club testifying $f\  E^\kappa_{\omega\text{-club}}\ g$, and let $C\supset C'$ be the closure of $C'$ under limits. By induction we are going to construct an isomorphism between $J_f$ and $J_g$.\\
We define continuous increasing sequences $(\alpha_i)_{i<\kappa}$ of ordinals and $(F_{\alpha_i})_{i<\kappa}$ of partial isomorphism from $J_f$ to $J_g$ such that:
\begin{itemize}
\item[a)] If $i$ is a successor, then $\alpha_i$ is a successor ordinal and there exists $\beta\in C$ such that $\alpha_{i-1}<\beta<\alpha_i$ and thus if $i$ is a limit, $\alpha_i\in C$.
\item[b)] Suppose that $i=\gamma+n$, where $\gamma$ is a limit ordinal or $0$, and $n<\omega$ is even. Then $dom(F_{\alpha_i})=J_f^{\alpha_i}$.
\item[c)] Suppose that $i=\gamma+n$, where $\gamma$ is a limit ordinal or $0$, and $n<\omega$ is odd. Then $rang(F_{\alpha_i})=J_g^{\alpha_i}$.
\item[d)] If $dom(\xi)<\omega$, $\xi\in dom (F_{\alpha_i})$, $\eta\restriction_{dom(\xi)}=\xi$ and for every $k\ge dom(\xi)$ $$\eta_1(k)=\xi_1(max (dom(\xi)))\text{ and } \eta_1(k)>0$$ then $\eta\in dom (F_{\alpha_i})$. Similar for $rang(F_{\alpha_i})$.
\item[e)] If $\xi\in dom(F_{\alpha_i})$ and $k<dom(\xi)$, then $\xi\restriction_{k}\in dom(F_{\alpha_i})$.
\item[f)] For all $\eta\in dom (F_{\alpha_i})$, $dom(\eta)=dom(F_{\alpha_i}(\eta))$.
\end{itemize}
For every ordinal $\alpha$ denote by $M(\alpha)$ the ordinal that is order isomorphic to the lexicographic order of $\omega\times \alpha^4$.\\ \\
\textit{\bf First step (i=0).}\\
Let $\alpha_0=\beta+1$ for some $\beta\in C$. Let $\gamma$ be an ordinal such that there is a coloured tree isomorphism $h: P^{0,M(\beta)}_\gamma\rightarrow J_f^{\alpha_0}$ and $Q(P^{0,M(\beta)}_\gamma)=0$. It is easy to see that such $\gamma$ exists, by the way our enumeration was chosen.\\
Since $P^{0,M(\beta)}_\gamma$ and $J_f^{\alpha_0}$ are closed under initial segments, then $|dom(h^{-1}(\eta))|=|dom(\eta)|$. Also both domains are intervals containing zero, therefore $dom(h^{-1}(\eta))=dom(\eta)$.\\
Define $F_{\alpha_0}(\eta)$ for $\eta\in J^{\alpha_0}_f$ as follows, let $F_{\alpha_0}(\eta)$ be the function $\xi$ with $dom(\xi)=dom(\eta)$, and for all $\kappa<dom(\xi)$
\begin{itemize}
\item $\xi_1(k)=1$
\item $\xi_2(k)=0$
\item $\xi_3(k)=M(\beta)$
\item $\xi_4(k)=\gamma$
\item $\xi_5(k)=h^{-1}(\eta)(k)$
\end{itemize}
To check that $\xi\in J_g$, we will check every item of Definition 4.6. Since $rang(F_{\alpha_0})=\{1\}\times\{0\}\times\{M(\beta)\}
\times\{\gamma\}\times P^{0,M(\beta)}_\gamma$, $\xi$ satisfies 1. Also $\xi_5=h^{-1}(\eta)\in P^{0,M(\beta)}_\gamma$, by definition of $P^{\alpha,\beta}_\gamma$, we now that $\xi_5$ is strictly increasing with respect to the lexicographic order, then $\xi$ satisfies item 2. Notice that $\xi$ is constant in every component except for $\xi_5$, therefore $\xi$ satisfies the items 3, 5, 6, 8 (a). Clearly $\xi_1(i)\neq 0$, so $\xi$ satisfies item 4. Notice that $[0,\omega)=\xi_1^{-1}(1)$ but $P^{\xi_2(k),\xi_3(k)}_{\xi_4(k)}=P^{0,M(\beta)}_\gamma$ for every $k$, therefore $\xi_5\in P^{\xi_2(0),\xi_3(0)}_{\xi_4(0)}$ and $\xi$ satisfies 7.\\
Let us show that the conditions a)-f) are satisfied, the conditions a) and c) are clearly satisfied. By the way $F_{\alpha_0}$ was defined, $dom(F_{\alpha_0})=J^{\alpha_0}_f$ and  $dom(\eta)=dom(F_{\alpha_0}(\eta))$, these are the conditions b), e) and f). Since $dom(F_{\alpha_0})=J^{\alpha_0}_f$, the Claim 4.8 implies d) for $dom(F_{\alpha_0})$. For d) with $rang(F_{\alpha_0})$, suppose $\xi\in rang(F_{\alpha_0})$ and $\eta \in J_g$ are as in the assumption. Then $\eta_1(k)=\xi_1(k)=1$ for every $k<dom(\eta)$, by 6 in $J_g$ we have that $\eta_2(k)=\xi_2(k)=0$, $\eta_3(k)=\xi_3(k)=M(\beta)$ and $\eta_4(k)=\xi_4(k)=\gamma$ for every $k<dom(\eta)$. By 7 in $J_g$, $\eta_5\in P^{0,M(\beta)}_\gamma$ and since $rang(F_{\alpha_0})=\{1\}\times\{0\}\times\{M(\beta)\}
\times\{\gamma\}\times P^{0,M(\beta)}_\gamma$, we can conclude that $\eta\in rang(F_{\alpha_0})$.\\ \\
\textit{\bf Odd successor step.}\\
Suppose that $j<k$ is a successor ordinal such that $j=\beta_j+n_j$ for some limit ordinal (or 0) $\beta_j$ and an odd integer $n_j$. Assume  $\alpha_l$ and $F_{\alpha_l}$ are defined for every $l<j$ satisfying the conditions a)-f).\\
Let $\alpha_j=\beta+1$ where $\beta\in C$ is such that $\beta>\alpha_{j-1}$ and $rang(F_{\alpha_{j-1}})\subset J_g^\beta$, such a $\beta$ exists because $|rang(F_{\alpha_{j-1}})|\leq 2^{|\alpha_{j-1}|}$ and $\kappa$ is strongly inaccessible.\\ When $\eta\in rang(F_{\alpha_{j-1}})$ has finite domain $m$, define $$W(\eta)=\{\zeta|dom(\zeta)=[m,s),m<s\leq \omega, \eta^\frown \langle m,\zeta(m) \rangle\notin rang(F_{\alpha_{j-1}})\text{ and }\eta^\frown \zeta\in J_g^{\alpha_j}\}$$ with the color function $c_{W(\eta)}(\zeta)=c_g(\eta^\frown \zeta)$ for every $\zeta\in W(\eta)$ with $s=\omega$. Denote $\xi'=F^{-1}_{\alpha_{j-1}}(\eta)$, $\alpha=\xi'_3(m-1)+\xi'_4(m-1)$ and $\theta=\alpha+M(\alpha_j)$. Now choose an ordinal $\gamma_\eta$ such that $Q(P_{\gamma_\eta}^{\alpha,\theta})=m$ and there is an isomorphism $h_\eta:P_{\gamma_\eta}^{\alpha,\theta}\rightarrow W(\eta)$. We will define $F_{\alpha_{j}}$ by defining its inverse such that $rang(F_{\alpha_{j}})=J_g^{\alpha_j}$.\\
Each $\eta\in J_g^{\alpha_j}$ satisfies one of the followings:
\begin{itemize}
\item[(*)] $\eta\in rang(F_{\alpha_{j-1}})$.
\item[(**)] $\exists m<dom(\eta)(\eta\restriction_{m}\in rang(F_{\alpha_{j-1}})\wedge \eta\restriction_{(m+1)}\notin rang(F_{\alpha_{j-1}}))$.
\item[(***)] $\forall m<dom(\eta)(\eta\restriction_{(m+1)}\in rang(F_{\alpha_{j-1}})\wedge \eta\notin rang(F_{\alpha_{j-1}}))$.
\end{itemize}
We define $\xi=F_{\alpha_{j}}^{-1}(\eta)$ as follows. There are the three cases:\\ \\
Case $\eta$ satisfies $(*)$.\\
Define $\xi(n)=F_{\alpha_{j-1}}^{-1}(\eta)(n)$ for all $n<dom(\eta)$.\\ \\
Case $\eta$ satisfies $(**)$.\\
Let $m$ witnesses (**) for $\eta$. For every $n<dom(\xi)$
\begin{itemize}
\item If $n<m$, then $\xi(n)=F_{\alpha_{j-1}}^{-1}(\eta\restriction_m)(n)$.
\item For every $n\ge m$. Let
\begin{itemize}
\item $\xi_1(n)=\xi_1(m-1)+1$
\item $\xi_2(n)=\xi_3(m-1)+\xi_4(m-1)$
\item $\xi_3(n)=\xi_2(m)+M(\alpha_j)$
\item $\xi_4(n)=\gamma_{\eta\restriction_m}$
\item $\xi_5(n)=h^{-1}_{\eta\restriction_m}(\eta\restriction_{[m,dom(\eta))})(n)$
\end{itemize}
\end{itemize}
Note that, $\eta\restriction_{[m,dom(\eta))}$ is an element of $W({\eta\restriction_m})$, this makes possible the definition of $\xi_5$.\\
Let us check the items of Definition 4.6 to see that $\xi\in J_f$. Clearly item 1 is satisfied. By induction hypothesis, $\xi\restriction_m$ is increasing, $\xi_1(m)=\xi_1(m-1)+1$ so $\xi(m-1)<\xi(m)$, and $\xi_k$ is constant on $[m, \omega)$ for $k\in \{1,2,3,4\}$,since $h^{-1}_{\eta\restriction_m}(\eta)\in P_{\gamma_\eta}^{\alpha,\theta}$, then $\xi_5$ is increasing, and we conclude that $\xi$ is increasing respect to the lexicographic order, so $\xi$ satisfies item 2. Also we conclude $\xi_1(i)\leq \xi_1(i+1)\leq \xi_1(i)+1$, so $\xi$ satisfies item 3. For every $i<\omega$, $\xi_1(i)=0$ implies $i<m$, so $\xi(i)=F_{\alpha_{j-1}}^{-1}(\eta\restriction_m)(i)$ and by the induction hypothesis $\xi$ satisfies item 4. By the induction hypothesis, for every $i+1<m$, $\xi_1(i)<\xi_1(i+1)$ implies $\xi_2(i+1)\ge \xi_3(i)+\xi_4(i)$, on the other hand $\xi_1(i)=\xi_1 (i+1)$ implies $\xi_k (i)=\xi_k (i+1)$ for $k\in \{2,3,4\}$, clearly $\xi_2(m)\ge \xi_3(m-1)+\xi_4(m-1)$ and $\xi_k (i)=\xi_k (i+1)$ for $i>m$ and $k\in\{2,3,4\}$, then $\xi$ satisfies items 5 and 6.\\
Suppose $[i,j)=\xi_1^{-1}(k)$  for some $k$ in $rang(\xi)$. Either $j<m$ or $m=i$. If $j<m$, by the induction hypothesis $\xi_5\restriction_{[i,j)}\in P_{\xi_4(i)}^{\xi_2(i),\xi_3(i)}$, if $[i,j)=[m,dom(\xi))$, then $\xi_5\restriction_{[i,j)}=h^{-1}_{\eta\restriction_m}(\eta\restriction_{[m,dom(\xi))})\in P_{\xi_4(m)}^{\xi_2(m),\xi_3(m)}$, $\xi$ thus satisfies item 7. Since $\xi$ is constant on $[m,\omega)$, $\xi$ satisfies 8 (a). Finally by item 8 (a) when $dom(\zeta)=\omega$, $c_f(\xi)=c(\xi_5\restriction_{[m,\omega)})$, where $c$ is the color of $P_{\xi_4(m)}^{\xi_2(m),\xi_3(m)}$. Since $\xi_5\restriction_{[m,\omega)}=h^{-1}_{\eta\restriction_m}(\eta\restriction_{[m,\omega)})$, $c_f(\xi)=c(h^{-1}_{\eta\restriction_m}(\eta\restriction_{[m,\omega)}))$ and since $h$ is an isomorphism, $c_f(\xi)=c_{W(\eta\restriction _m)}(\eta\restriction_{[m,\omega)})=c_g(\eta)$.\\ \\
Case $\eta$ satisfies $(***)$.\\
Clearly $dom(\eta)=\omega$, by the induction hypothesis and condition d), $rang(\eta)=\omega$, otherwise $\eta\in rang(F_{\alpha_{j-1}})$. Let $F^{-1}_{\alpha_j}(\eta)=\xi=\cup_{n<\omega}F^{-1}_{\alpha_{j-1}}(\eta\restriction_n)$, by the induction hypothesis, $\xi$ is well defined. Since for every $n<\omega$, $\xi\restriction_n\in J_f$, then $\xi\in J_f$. Let us check that $c_f(\xi)=c_g(\eta)$. First note that $\xi\notin J_f^{\alpha_{j-1}}$, otherwise by the induction hypothesis f), 
\begin{equation*}
F_{\alpha_{j-1}}(\xi)=\bigcup_{n<\omega}F_{\alpha_{j-1}}(\xi\restriction_n)=\bigcup_{n<\omega}\eta\restriction_n=\eta 
\end{equation*}
giving us $\eta\in rang(F_{\alpha_{j-1}})$. By the equation (2), $sup(rang(\xi_5))=\alpha_{j-1}$ and $\xi$ satisfies item 8 b) in $J_f$, therefore $c_f(\xi)=f(\alpha_{j-1})$. Also by the definition of $J_f^\alpha$ and since $\xi\restriction_n\in J_f^{\alpha_{j-1}}$ for every $n<\omega$, $\alpha_{j-1}$ is a limit ordinal and by condition a), $j-1$ is a limit ordinal and $\alpha_{j-1}\in C$. The conditions b) and c) ensure $rang(F_{\alpha_{j-1}})=J_f^{\alpha_{j-1}}$. This implies, $\eta\notin J_f^{\alpha_{j-1}}$. By the equation (2), $sup(rang(\eta_5))=\alpha_{j-1}$. Therefore $\alpha_{j-1}$ has cofinality $\omega$, $\alpha_{j-1}\in C'$ and $f(\alpha_{j-1})=g(\alpha_{j-1})$. By item 8 b) in $J_g$, $c_g(\eta)=g(\alpha_{j-1})=f(\alpha_{j-1})=c_f(\xi)$.\\ \\
Next we show that $F_{\alpha_i}$ is a color preserving partial isomorphism. We already showed that $F_{\alpha_i}$ preserve the colors, so we only need to show that 
\begin{equation}
\eta\subsetneq \xi \Leftrightarrow F_{\alpha_i}^{-1}(\eta)\subsetneq F^{-1}_{\alpha_i}(\xi).
\end{equation}
From left to right.\\
When $\eta,\xi\in rang(F_{\alpha_{i-1}})$, the induction hypothesis implies (3) from left to right. If $\eta\in rang(F_{\alpha_{i-1}})$ and $\xi\notin rang(F_{\alpha_{i-1}})$, the construction implies (3) from left to right. Let us assume $\eta,\xi\notin rang(F_{\alpha_{i-1}})$, then $\eta,\xi$ satisfy (**). Let $m_1$ and $m_2$ be the respective natural numbers that witness (**) for $\eta$ and $\xi$, respectively. Notice that $m_2<dom(\eta)$, otherwise, $\eta\in rang(F_{\alpha_{i-1}})$. If $m_1<m_2$, clearly $\eta\in rang(F_{\alpha_{i-1}})$ what is not the case. A similar argument shows that $m_2<m_1$ cannot hold. We conclude that $m_1=m_2$ and by the construction of $F_{\alpha_i}$, $F_{\alpha_i}^{-1}(\eta)\subsetneq F^{-1}_{\alpha_i}(\xi)$.\\ \\
From right to left.\\
When $\eta,\xi\in rang(F_{\alpha_{i-1}})$, the induction hypothesis implies (3) from right to left. If $\eta\in rang(F_{\alpha_{i-1}})$ and $\xi\notin rang(F_{\alpha_{i-1}})$, the construction implies (3) from right left. Let us assume $\eta,\xi\notin rang(F_{\alpha_{i-1}})$, then $\eta,\xi$ satisfy (**). Let $m_1$ and $m_2$ be the respective natural numbers that witness (**) for $\eta$ and $\xi$, respectively. Notice that $m_2<dom(\eta)$, otherwise, $F_{\alpha_i}^{-1}(\eta)=F_{\alpha_{i-1}}^{-1}(\eta)$ and $\eta\in rang(F_{\alpha_{i-1}})$. If $m_1<m_2$, then 
$$
\begin{array} {lcl} 
F_{\alpha_i}^{-1}(\eta)_1(m_2-1) & = & (F_{\alpha_{i}}^{-1}(\xi)\restriction _{m_2})_1(m_2-1)\\ & < & F_{\alpha_{i}}^{-1}(\xi\restriction _{m_2})_1(m_2-1)+1\\ & = & F_{\alpha_i}^{-1}(\eta)_1(m_2)\\ & = & F_{\alpha_i}^{-1}(\eta)_1(m_2-1).
\end{array}
$$
This cannot hold. A similar argument shows that $m_2<m_1$ cannot hold. We conclude that $m_1=m_2$.\\
By the induction hypothesis $F_{\alpha_{i-1}}^{-1}(\eta\restriction_{m_1})=F_{\alpha_{i-1}}^{-1}(\xi\restriction_{m_2})$ implies $\eta\restriction_{m_1}=\xi\restriction_{m_2}$ (also implies $h_{\eta\restriction_{m_1}}=h_{\xi\restriction_{m_2}}$). Since $F_{\alpha_{i-1}}^{-1}(\eta\restriction_{m_1})(n)=F_{\alpha_i}^{-1}(\eta)(n)$ for all $n<m_1$, we only need to prove that $\eta\restriction_{[m_1,dom(\eta))}\subsetneq \xi\restriction_{[m_2,dom(\xi))}$. But $h_{\eta\restriction_{m_1}}$ is an isomorphism and $F_{\alpha_i}^{-1}(\eta)_5(n) = F_{\alpha_{i}}^{-1}(\xi)_5(n)$ for every $n\ge m_1$, so $h^{-1}_{\eta\restriction_{m_1}}(\eta\restriction_{[m_1,dom(\eta))})(n)=h^{-1}_{\xi\restriction_{m_2}}(\xi\restriction_{[m_2,dom(\xi))})(n)$. Therefore $\eta\restriction_{[m_1,dom(\eta))}\subsetneq \xi\restriction_{[m_2,dom(\xi))}$.\\ \\
Let us check that this three constructions satisfy the conditions a)-f).\\
When $i$ is a successor we have  $\alpha_{i-1}<\beta<\alpha_i=\beta+1$ for some $\beta\in C$, this is the condition a). Clearly the three cases satisfy b). We defined $F_{\alpha_i}^{-1}$ according to (*), (**), or (***); since every $\eta\in J_g^{\alpha_j}$ satisfies one of these, we conclude $rang(F_{\alpha_i})=J_g^{\alpha_j}$ which is the condition c).\\
Let us show that the $F_{\alpha_i}$ satisfy condition d). Let $\xi$ and $\beta$ be as in the assumptions of condition d) for domain. Notice that if $\xi\in dom(F_{\alpha_{i-1}})$ then the induction hypothesis and Claim 4.8, ensure that $\eta\in dom(F_{\alpha_i})$. Suppose $\xi\notin dom(F_{\alpha_{i-1}})$, then $F_{\alpha_{i}}(\xi)\notin rang(F_{\alpha_{i-1}})$. Since $dom(\xi)<\omega$, so $F_{\alpha_{i}}(\xi)$ satisfies (**). Let $m$ be the number witnessing it. Clearly $\xi\in J_f^{\alpha_i}$, by Claim 4.8 $\eta\in J_f^{\alpha_i}$. By item 6 in $J_f^{\alpha_i}$, $\eta_k$ is constant on $[m,dom(\eta))$ for $k\in \{2,3,4\}$, now by Definition 4.6 numeral 7 in $J_f^{\alpha_i}$, $\eta_5\restriction_{[m,dom(\eta))}\in P^{\alpha,\beta}_{\gamma_{\xi\restriction _m}}$. Let $\zeta=h_{\xi\restriction_m}(\eta_{[m,dom(\eta))})$, then $\eta=F^{-1}_{\alpha_{i}}(F_{\alpha_i}(\xi\restriction_m)^\frown \zeta)$ and $\eta\in dom(F_{\alpha_i})$.\\
The condition d) for range follows from Claim 4.8.\\
For the conditions e) and f), notice that $\xi$ was constructed such that $dom(\xi)=dom(\eta)$ and $\xi\restriction_k\in dom(F_{\alpha_i})$ which are these conditions.\\ \\
\textit{\bf Even successor step.}\\
Suppose that $j<k$ is a successor ordinal such that $j=\beta_j+n_j$ for some limit ordinal (or 0) $\beta_j$ and an even integer $n_j$. Assume $\alpha_l$ and $F_{\alpha_l}$ are defined for every $l<j$ satisfying conditions a)-f).\\
Let $\alpha_j=\beta+1$ where $\beta\in C$ such that $\beta>\alpha_{j-1}$ and $dom(F_{\alpha_{j-1}})\subset J_f^\beta$, such a $\beta$ exists because $|dom(F_{\alpha_{j-1}})|\leq 2^{|\alpha_{j-1}|}$ and $\kappa$ is strongly inaccessible. The construction of $F_{\alpha_j}$ such that $dom(F_{\alpha_j})=J_f^{\alpha_i}$ follows as in the odd successor step, with the equivalent definitions for $dom(F_{\alpha_j})$ and $J_f^{\alpha_i}$. Notice that for every $\eta\in J_f^{\alpha_j}$, there are only the following cases:
\begin{itemize}
\item[(*)] $\eta\in dom(F_{\alpha_{j-1}})$.
\item[(**)] $\exists m<dom(\eta)(\eta\restriction_{m}\in dom(F_{\alpha_{j-1}})\wedge \eta\restriction_{(m+1)}\notin dom(F_{\alpha_{j-1}}))$.
\end{itemize}
\textit{\bf Limit step.}\\
Assume $j$ is a limit ordinal. Let $\alpha_j=\cup_{i<j}\alpha_i$ and $F_{\alpha_j}=\cup_{i<j}F_{\alpha_i}$, clearly $F_{\alpha_j}:J_f^{\alpha_j}\rightarrow J_g$ and satisfies condition c). Since for $i$ successor, $\alpha_i$ is the successor of an ordinal in $C$, then $\alpha_j\in C$ and satisfies the condition a). Also $F_{\alpha_j}$ is a partial isomorphism. Remember that $\cup_{i<j}J_f^{\alpha_i}=J_f^{\alpha_j}$, the same for $J_g$. By the induction hypothesis and the conditions b) and c) for $i<j$, we have $dom(F_{\alpha_j})=J_f^{\alpha_j}$ (this is the condition b)) and $rang(F_{\alpha_j})=J_g^{\alpha_j}$. This and Claim 4.8 ensure that condition d) is satisfied. By the induction hypothesis, for every $i<j$, $F_{\alpha_i}$ satisfies conditions e) and f), then $F_{\alpha_j}$ satisfies conditions e) and f). \hfill $_{\square_{\text{Claim 4.10}}}$\\ \\
Define $F=\cup_{i<\kappa}F_{\alpha_i}$, clearly, it is an isomorphism between $J_f$ and $J_g$.
\end{proof}
From now on $\kappa$ will be an inaccessible cardinal. Let us take a look to the sets $rang(f)$ and $rang(c_f)$, more specific to the set $\{\alpha<\kappa|f(\alpha)\in rang(c_f)\}$.\\ \\
{\bf Remark.} Assume $f\in \kappa^\kappa$ and let $J_f$ be the respective coloured tree obtained by Definition 4.6. If $\eta\in J_f$ satisfies Definition 4.6 item 8 b), then clearly exists $\alpha<\kappa$ such that $c_f(\eta)=f(\alpha)$. It is possible that not for every $\alpha<\kappa$, there is $\eta\in J^{\alpha+1}_f$ such that $c_f(\eta)=f(\alpha)$. Nevertheless the set $C=\{\alpha<\kappa|\exists \xi\in J^{\alpha+1}_f\text{ such that }\xi_1=id_\omega+1\text{ and } c_f(\xi)=f(\alpha)\}$ is an $\omega$-club. $C$ is unbounded: For every $\beta<\kappa$ we can construct the function $\eta\in J_f$ by $\beta_0=\beta$, $\eta_1=id_\omega+1$, $\eta_2(i)=\beta_i$, $\eta_3(i)=\beta_i+1$, $\eta_4(i)=\gamma_i$ and $\eta_5=\eta_2$, where $\gamma_i$ is the least ordinal such that $P_{\gamma_i}^{\beta_i \beta_i+1}=\{\xi:[i,i+1)\rightarrow [\beta_i,\beta_i+1)\}$ and $\beta_{i+1}=\beta_i+1+\gamma_i$; since $\kappa$ is inaccessible, $\eta\in J^{(\cup_{i<\omega}\beta_i)+1}_f$ and $\cup_{i<\omega}\beta_i\in \mathcal{C}$. $C$ is closed: Let $\{\alpha_i\}_{i<\omega}$ be a succession of elements of $\mathcal{C}$, for every $i<\omega$ let $\xi^i$ be an element of $J_f$ such that $\xi^i_1=id_\omega+1$ and $rang(\xi_5^i)=\alpha_i$, define $n_0=0$ and for every $i<\omega$, $n_{i+1}$ as the least natural number bigger than $n_i$ such that $\alpha_i<\xi^{i+1}_2(n_{i+1})$. The function $\xi$ define by $\xi\restriction_{[n_i,n_{i+1})}=\xi^i\restriction_{[n_i,n_{i+1})}$ is an element of $J^{(\cup_{i<\omega}\alpha_i)+1}_f$ such that $\xi_1=id_\omega+1$ and $rang(\xi_5)=\cup_{i<\omega}\alpha_i$, therefore $f(\cup_{i<\omega}\alpha_i)=c_f(\xi)$ and $\cup_{i<\omega}\alpha_i\in \mathcal{C}$.


\section{The Orthogonal Chain Property}
In this section we will construct a model of $T$ from an element of $\kappa^\kappa$. Before this, let us fix some notation and make some general assumptions. 
From now on $T$ is going to be a stable theory. Denote by $\lambda(T)$ the least cardinal such that $T$ is $\lambda$-stable, $\lambda_r(T)$ the least regular cardinal $\lambda$ bigger or equal than $\lambda(T)$. And $\kappa$ will be bigger than $\lambda_r(T)$. \\ \\
For every $J\subseteq \kappa^{\leq\omega}$ closed under initial segments, order $I=\mathcal{P}_\omega(J)$ by $\leq$ as, for every $u,v\in I$ we say $u\leq v$ if for every $\eta\in u$ exists $\xi\in v$ such that $\eta$ is an initial segment of $\xi$. Let us denote by $r(\eta,\xi)$ the longest element in $J$ that is an initial segment of both, and $u\cap^*v$ the largest set that satisfies:
\begin{itemize}
\item $u\cap^*v\subseteq \{r(\eta,\xi)|\eta\in u,\xi\in v\}$
\item if $\tau \in u\cap^*v$, $\eta\in u$, $\xi\in v$ and $\tau$ is an initial segment of $r(\eta, \xi)$ then $\tau=r(\eta,\xi)$
\end{itemize}
\begin{definicion}
Assume $J\subseteq \kappa^{\leq\omega}$ is closed under initial segments and $I=\mathcal{P}_\omega(J)$. We say that an indexed family $\Sigma=\{A_u|u\in I\}$ is strongly independent if:
\begin{itemize}
\item For every $u,v\in I$, $u\leq v$ implies $A_u\subseteq A_v$.
\item if $u,u_i\in I$ for $i<n$ and $B\subseteq \cup_{i<n}A_{u_i}$ has power less than $\lambda_r(T)$, then there is an automorphism of the monster model $f=f^{\Sigma, B}_{u,u_0,\ldots,u_{n-1}}$, such that $f\restriction_{(B\cap A_u)}=id_{B\cap A_u}$ and $f(B\cap A_{u_i})\subseteq A_{u\cap^*u_i}$.
\end{itemize}
\end{definicion}
We will construct models using an isolation notion. In \cite{HS98} Shelah gives an axiomatic approach for isolation notion and defines $F$-constructible, $F$-primary and $F$-atomic where $F$ is an isolation notion.
\begin{definicion}
Denote by $F_{\lambda_r(T)}^s$ the set of pairs $(p,A)$ with $|A|<\lambda_r(T)$, such that for some $B\supseteq A$, $p\in S(B)$, and $p\restriction_A\vdash p$.
\end{definicion}
$F_{\lambda_r(T)}^s$ is the isolation notion we are going to use. Instead of write $F_{\lambda_r(T)}^s$-constructible, $F_{\lambda_r(T)}^s$-primary and $F_{\lambda_r(T)}^s$-atomic we will write $s$-constructible, $s$-primary and $s$-atomic.\\
Now we can state in detail the lemma that leads us to the construction of $\mathcal{A}^f$ from the coloured tree $J_f$. The proof of this lemma can be found in \cite{HS98} (Theorem 4 and Claim (I)).
\begin{lema}
Assume that $\Sigma=\{A_u|u\in I\}$, $I=\mathcal{P}_\omega (J)$ is strongly independent. Then there are subsets of the monster model, $\mathcal{A}_u$ for $u\in I$, such that
\begin{itemize}
\item [(a)] for all $u,v\in I$, $u\leq v$ implies $\mathcal{A}_u\subseteq \mathcal{A}_v$
\item [(b)] for all $u\in I$, $\mathcal{A}_u$ is $s$-primary over $A_u$; in fact it is $s$-primary over $\cup_{v<u}\mathcal{A}_v$ (see the proof of Theorem 4 in \cite{HS98})
\item [(c)] $\cup_{u\in I}\mathcal{A}_u$ is a model
\item [(d)] if $v\leq u$, then $\mathcal{A}_u$ is $s$-atomic over $\cup_{\eta\in J_f}A_\eta$ and $s$-primary over $\mathcal{A}_v\cup A_u$; in fact for all $a\in \mathcal{A}_u$ there is $B\subset A_u$ of power less than $\lambda_r(T)$, such that $t(a,B)\vdash t(a,\cup_{\eta\in J_f}A_\eta)$ (see the proof of Theorem 4 in \cite{HS98})
\item [(e)] if $J'\subseteq J$ is closed under initial segments and $u\in \mathcal{P}_\omega(J')$, then $\cup_{v\in \mathcal{P}_\omega(J')}\mathcal{A}_v$ is $s$-constructible over $\mathcal{A}_u\cup\bigcup_{v\in \mathcal{P}_\omega(J')}A_v $
\item [(f)] the family $\{\mathcal{A}_u|u\in I\}$ is strongly independent (see Claim (I) in the proof of Theorem 4 in \cite{HS98})
\end{itemize}
\end{lema}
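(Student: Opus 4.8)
The plan is to treat this as an application of Shelah's axiomatic theory of isolation notions from \cite{HS98}; the statement is essentially Theorem 4 together with Claim (I) of that paper, specialized to the notion $F^s_{\lambda_r(T)}$. The first task is to verify that $F^s_{\lambda_r(T)}$ is a well-behaved isolation notion in the sense demanded by that theory. Since $T$ is stable and $\lambda_r(T)$ is regular and at least $\lambda(T)$, one checks the relevant axioms: the existence of $s$-primary models over an arbitrary base set, their $s$-atomicity, transitivity of $s$-construction, and the finite-character feature that $p\restriction_A\vdash p$ with $|A|<\lambda_r(T)$. These are exactly the properties that make $s$-construction available over arbitrary sets, and they are where $\lambda(T)$-stability and the regularity of $\lambda_r(T)$ enter.

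Granting this, I would build the $\mathcal{A}_u$ by recursion along a well-ordering of $I=\mathcal{P}_\omega(J)$ refining the partial order $\leq$, so that $v$ precedes $u$ in the recursion whenever $v\leq u$ as elements of $I$. At stage $u$ I take $\mathcal{A}_u$ to be an $s$-primary model over $\cup_{v<u}\mathcal{A}_v\cup A_u$, which exists by the isolation theory. Property (b) is then immediate, and primeness over $A_u$ follows from $A_u\subseteq\cup_{v<u}\mathcal{A}_v\cup A_u$ together with atomicity; (a) holds because $u\leq v$ forces $A_u\subseteq A_v$ and the recursion accumulates the earlier $\mathcal{A}_v$. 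For (c) the crucial observation is that $I$ is directed under $\leq$: any two finite subsets of $J$ are contained in their union, which again lies in $\mathcal{P}_\omega(J)$. Hence every finite tuple from $\cup_u\mathcal{A}_u$ already lies in a single $\mathcal{A}_u$, and since $s$-primary models over sets are models in a stable theory, a Tarski--Vaught argument shows the directed union is a model. Properties (d) and (e) are the transitivity and localization statements of $s$-construction: $s$-atomicity of $\mathcal{A}_u$ over $\cup_\eta A_\eta$ and $s$-primeness over $\mathcal{A}_v\cup A_u$ come from composing constructions, while (e) follows by rerunning the same recursion inside $\mathcal{P}_\omega(J')$ for downward closed $J'\subseteq J$.

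The hard part, and the reason the statement is not purely formal, is property (f): that $\{\mathcal{A}_u\mid u\in I\}$ inherits the strong independence of $\Sigma$. This is Claim (I) of \cite{HS98}. The difficulty is to upgrade the automorphism $f=f^{\Sigma,B}_{u,u_0,\ldots,u_{n-1}}$ — only known to fix $B\cap A_u$ and send $B\cap A_{u_i}$ into $A_{u\cap^* u_i}$ — to an automorphism witnessing strong independence of the larger sets $\mathcal{A}_u$. The mechanism is to exploit that each $\mathcal{A}_u$ is $s$-atomic over its generators by (d): any subset $B'\subseteq\cup_{i<n}\mathcal{A}_{u_i}$ of power less than $\lambda_r(T)$ is $s$-isolated over a small subset of $\cup_i A_{u_i}$, so one first produces the automorphism at the level of the generating sets and then transports it along the atomic/primary structure, using uniqueness of $s$-primary models to see the image again lands inside the appropriate $\mathcal{A}_{u\cap^* u_i}$. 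Keeping all the sets involved below $\lambda_r(T)$ so that the isolation notion continues to apply is the cardinality bookkeeping where regularity of $\lambda_r(T)$ is genuinely used; this transfer of strong independence is the main technical obstacle, the remaining verifications being routine once the machinery of \cite{HS98} is in place.
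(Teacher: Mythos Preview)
Your proposal is correct and matches the paper's approach: the paper does not prove this lemma at all but simply cites Theorem 4 and Claim (I) of \cite{HS98}, and then records (immediately after the lemma) exactly the recursive construction you describe, namely an enumeration $\{u_i\mid i<\beta\}$ of $I$ compatible with $\leq$ together with an $s$-construction sequence $(a_\gamma,B_\gamma)$ indexed by a function $s:\alpha\to I$ so that each $\mathcal{A}_u=A_u^\alpha$ is $s$-primary over $\cup_{v<u}\mathcal{A}_v$. Your sketch gives more detail on why (a)--(e) follow and correctly identifies (f) as the nontrivial point handled by Claim (I) in \cite{HS98}; there is no substantive difference in strategy.
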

\noindent
In \cite{HS98} the models for Lemma 5.3 above, are constructed as follow:
Let $\{u_i|i<\beta\}$ be an enumeration of $I$ such that $u_i\leq u_j$ and $u_j\nleq u_i$ implies $i\leq j$. Choose $\alpha$, $\gamma_i<\alpha$ for $i<\beta$, $a_\gamma$ and $B_\gamma$ for $\gamma<\alpha$, and $s:\alpha\rightarrow I$ so that
\begin{enumerate}
\item $\gamma_0=0$ and $(\gamma_i)_{i<\beta}$ is increasing and continuous,
\item if $\gamma_i\leq\gamma<\gamma_{i+1}$, then $s(\gamma)=u_i$,
\item for all $\gamma<\alpha$, $|B_\gamma|<\lambda$ and if we write for $\gamma\leq \alpha$, $A_u^\gamma=A_u\cup\{a_\delta|\delta<\gamma,s(\delta)\leq u\}$, then $B_\gamma\subseteq A_{s(\gamma)}^\gamma$,
\item for all $\gamma<\alpha$, if we write $A^\gamma=\cup_{u\in I}A^\gamma_u$, then $t(a_\gamma,B_\gamma)$ $s$-isolates $t(a_\gamma,A^\gamma)$,
\item for all $i<\beta$, there are no $a\notin A_{u_i}^{\gamma_{i+1}}$ and $B\subseteq A^{\gamma_{i+1}}_{u_i}$ of power less than $\lambda$ such that $t(a,B)$ $s$-isolates $t(a,A^{\gamma_{i+1}})$,
\item if $a_\delta\in B_\gamma$, then $B_\delta\subseteq B_\gamma$.
\end{enumerate}
For all $u\in I$, $\mathcal{A}_u=A_u^\alpha$.\\ \\
By 3 and 4, $\mathcal{A}_u$ is s-constructible over $\cup_{v<u}\mathcal{A}_v$.\\ \\
At this point it is clear that our intention is to use Lemma 5.3 with $I=\mathcal{P}_\omega(J_f)$. We only need to find the appropriate sets $A_u$ for us. We will use the orthogonal chain property to construct a strongly independent family $\Sigma=\{A_u|u\in I\}$ with some properties useful for us.
The orthogonal chain property implies that $T$ is unsuperstable, as we will see later. 
\begin{definicion}
$T$ has the orthogonal chain property (OCP), if there exist $\lambda_r(T)$-saturated models of $T$ of power $\lambda_r(T)$, $\{\mathcal{A}_i\}_{i<\omega}$, $a\notin \cup_{i<\omega}\mathcal{A}_i$, such that $t(a,\cup_{i<\omega}\mathcal{A}_i)$ is not algebraic for every $j<\omega$, $t(a,\cup_{i<\omega}\mathcal{A}_i)\perp \mathcal{A}_j$, and for every $i\leq j$, $\mathcal{A}_i\subseteq \mathcal{A}_j$.
\end{definicion}
The OCP is similar to the DIDIP defined by Shelah in \cite{Sh}.\\
If $T$ has the OCP then $T$ is unsuperstable, the chain $\mathcal{A}_i\subseteq \mathcal{A}_j$ and $a$ satisfy $a\not\downarrow_{\mathcal{A}_i}\mathcal{A}_{i+1}$.\\
To show this, assume $T$ is superstable and has the OCP. Let $\{\mathcal{A}_i\}_{i<\omega}$ be the chain given by the OCP and construct the following chain by induction. Let $B_0$ and $B_1$ be the least elements of $\{\mathcal{A}_i\}_{i<\omega}$ such that $B_0\subset B_1$ and $a\not\downarrow_{B_0}B_1$. For every $0<i<\omega$ let $B_{i+1}$ be the least element of $\{\mathcal{A}_i\}_{i<\omega}$ that satisfies $B_i\subset B_{i+1}$ and $a\not\downarrow_{B_i}B_{i+1}$. Since $T$ is superstable, this chain is finite, let $B_n$ be the biggest element of this chain. By the inductive construction of $\{B_i\}_{i\leq n}$ we know that $a\downarrow_{B_n}\mathcal{A}_j$ for every $B_n\subset\mathcal{A}_j$. Therefore, for every finite subset $A\subset \cup_{i<\omega}\mathcal{A}_i$, $a\downarrow_{B_n}A$ and by the finite character $a\downarrow_{B_n}\cup_{i<\omega}\mathcal{A}_i$. By assumption $T$ has the OCP, then $t(a,\cup_{i<\omega}\mathcal{A}_i)\perp \mathcal{A}_j$ for every $j$, in particular $t(a,\cup_{i<\omega}\mathcal{A}_i)\perp B_n$. So $a\downarrow_{\cup_{i<\omega}\mathcal{A}_i}a$ and $t(a,\cup_{i<\omega}\mathcal{A}_i)$ is algebraic, a contradiction.\\
From now on we will assume that $T$ has the OCP.\\ \\
The following is the construction of the family $\Sigma=\{A^f_u|u\in I\}$ from $J_f$ using the OCP. By Definition 4.6 $J_f\subseteq (\omega\times\kappa^4)^{\leq \omega}$, we will denote by $\mathcal{X}$ the set $\omega\times\kappa^4$.\\
Let $a$ and $\{\mathcal{A}_i^f\}_{i<\omega}$ be the ones witnessing the OCP for $T$.
Since for every saturated model, $B\supset \mathcal{A}$ and $C$, there is $D$ such that $t(C,\mathcal{A})=t(D,\mathcal{A})$ and $D\downarrow_\mathcal{A}B$. Then  we can find for each $\eta\in (J_f)_\omega$ ($(J_f)_\omega=\{x\in J_f|ht(x)=\omega\}$) automorphisms of the monster model, $\{H_{\eta\restriction_i}\}_{i\leq \omega}$ and models $\{\mathcal{A}_{\eta\restriction_i}^f\}_{i\leq\omega}$ , that satisfies 
\begin{itemize}
\item $H_\eta(\mathcal{A}_i^f)=\mathcal{A}^f_{\eta\restriction_i}$. 
\item $H_{\eta\restriction_i}=H_\eta\restriction_{\mathcal{A}^f_i}$. 
\item Define $A^f_u$ for each $u\subseteq J_f$ as $A^f_u=\cup_{\eta\in u}\mathcal{A}^f_\eta$.
\item Define $U(\xi,\alpha)$ for every $\xi\in J_f\cap \mathcal{X}^{<\omega}$ and $\alpha\in \mathcal{X}$, as the set of the $\zeta\in J_f$ that extend $\xi ^\frown \langle dom(\xi),\alpha\rangle$, and $V(\xi,\alpha)=J_f\backslash U(\xi,\alpha)$. Then $$A^f_{U(\xi,\alpha)} \downarrow_{\mathcal{A}^f_\xi}A^f_{V(\xi,\alpha)}.$$
\item $\mathcal{A}^f_\eta$ is the $s$-primary model over $\bigcup_{i<\omega}\mathcal{A}^f_{\eta\restriction i}\cup \bigcup_{i<c_f(\eta)}\{a_i\}$ where $\{a_i\}_{i<c_f(\eta)}$ is an independent sequence of elements satisfying the type $t(H_\eta(a),\mathbb{A}(\eta))$, $\mathbb{A}(\eta)=\bigcup_{i<\omega}\mathcal{A}^f_{\eta\restriction i}$.
\end{itemize}
This construction was made in \cite{hy} and \cite{HS98}. In \cite{HS98} is proven that the family $\{A^f_u|u\in \mathcal{P}_\omega(J_f)\}$, is strongly independent.\\ \\
{\bf Remark.} Notice that for every $\eta\in (J_f)_\omega$, $\mathcal{A}^f_\eta$ is $s$-primary over $\bigcup_{i<\omega}\mathcal{A}^f_{\eta\restriction i}\cup \bigcup_{i<c_f(\eta)}\{a_i\}$ and since $T$ is countable, then $$|\mathcal{A}^f_\eta|\leq \lambda(T)+(|\bigcup_{i<\omega}\mathcal{A}^f_{\eta\restriction i}\cup \bigcup_{i<c_f(\eta)}\{a_i\}|+\lambda_r(T))^\omega.$$ If $f$ satisfies, $|f(\alpha)^\omega|=|f(\alpha)|$, $\lambda_r(T)<f(\alpha)$, and $f(\alpha) =c_f(\eta)$, for some $\alpha<\kappa$ and $\eta\in J_f$. Then $|\mathcal{A}^f_\eta|=c_f(\eta)$.\\ \\
For every $\eta\in (J_f)_\omega$ denote by $p_\eta^f$ the type $t(H_\eta(a),\mathbb{A}(\eta))$ clearly $p_\eta^f\perp \mathcal{A}^f_{\eta \restriction i}$ for every $i<\omega$. Denote by $\mathcal{I}_f$ the set $\mathcal{P}_\omega(J_f)$, the family $\{A^f_u|u\in \mathcal{I}_f\}$ is strongly independent, by Lemma 5.3 we obtain the models $\{\mathcal{A}^f_u\}_{u\in \mathcal{I}_f}$ and $\mathcal{A}^f=\cup_{u\in \mathcal{I}_f}\mathcal{A}^f_u$.\\
We will write $\mathcal{A}_\eta$ and $A_\eta$ instead of $\mathcal{A}^f_\eta$ and $A^f_\eta$, when it is clear and there is no possibility of ambiguity.\\ 
Under some assumptions on $f$ and $g$, elements of $\kappa^\kappa$, the models $\mathcal{A}^f$ and $\mathcal{A}^g$ are isomorphic if and only if $f$ and $g$ are $E^\kappa_{\omega\text{-club}}$ related. The proof of this is made by a dimension argument. Therefore, before we start with the proof we need to do some calculation, like calculate $|\mathcal{A}_f^{<\alpha}|$ and others.
\begin{fact}
Let $I_f^\alpha=\mathcal{P}_\omega(J_f^\alpha)$, where $J_f^\alpha=\{\eta\in J_f| rang(\eta)\subset \omega\times(\beta)^4\text{ for some }\beta<\alpha\}$ (as in the proof of Lemma 4.7), define $\mathcal{A}_f^{\alpha}=\cup_{u\in I_f^\alpha }\mathcal{A}_u$. If for every $\beta<\kappa$, $\lambda_r(T)<f(\beta)$, $|f(\beta)^\omega|=|f(\beta)|$ and $\beta<f(\beta)$, then there exists a club such that every $\alpha$ in that club satisfies $|\mathcal{A}_f^{\alpha+1}|\leq sup(\{f(\beta)\}_{\beta\leq\alpha})$.
\end{fact}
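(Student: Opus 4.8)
The plan is to control $|\mathcal{A}_f^{\alpha+1}|$ through the product estimate $|\mathcal{A}_f^{\alpha+1}|\le |I_f^{\alpha+1}|\cdot\sup_{u\in I_f^{\alpha+1}}|\mathcal{A}_u|$ and to bound each factor on a suitable club, writing $\theta_\alpha=\sup\{f(\beta)\mid\beta\le\alpha\}$ for the target. First I would estimate $|I_f^{\alpha+1}|=|\mathcal{P}_\omega(J_f^{\alpha+1})|=|J_f^{\alpha+1}|$: every $\eta\in J_f^{\alpha+1}$ is a sequence of length $\le\omega$ whose values lie in $\omega\times\alpha^4$, so $|J_f^{\alpha+1}|\le|\alpha|^{\aleph_0}$, and since $\alpha<f(\alpha)$ and $|f(\alpha)^\omega|=|f(\alpha)|$ this yields $|J_f^{\alpha+1}|\le|f(\alpha)|^{\aleph_0}=|f(\alpha)|\le\theta_\alpha$. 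This step needs no club and uses only the hypotheses on $f$.

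The heart of the matter is bounding $|\mathcal{A}_u|$ for $u\in I_f^{\alpha+1}$. Since $\mathcal{A}_u$ is $s$-primary over $A_u=\bigcup_{\eta\in u}\mathcal{A}_\eta$, the size estimate for $s$-primary models recorded in the Remark bounding $|\mathcal{A}^f_\eta|$ gives $|\mathcal{A}_u|\le\lambda(T)+(|A_u|+\lambda_r(T))^\omega$, and because $u$ is \emph{finite} we have $|A_u|\le\max_{\eta\in u}|\mathcal{A}_\eta|$, a maximum that is actually attained. Hence it suffices to show that for each $\eta\in J_f^{\alpha+1}$ the cardinal $|\mathcal{A}_\eta|$ is bounded by some $\mu\le\theta_\alpha$ with $\mu^{\aleph_0}=\mu$; then $|\mathcal{A}_u|\le\lambda(T)+\mu^{\aleph_0}=\mu\le\theta_\alpha$, and finally $|\mathcal{A}_f^{\alpha+1}|\le|f(\alpha)|\cdot\theta_\alpha=\theta_\alpha$. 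I would split into three cases: if $ht(\eta)<\omega$ then $|\mathcal{A}_\eta|=\lambda_r(T)\le|f(\alpha)|$; if $ht(\eta)=\omega$ and $\eta$ falls under Definition 4.6 item 8 (b), then $c_f(\eta)=f(\gamma)$ with $\gamma=\sup(rang(\eta_5))\le\alpha$, so by the Remark $|\mathcal{A}_\eta|=|f(\gamma)|$, a cardinal closed under $\omega$-powers by hypothesis and $\le\theta_\alpha$. Both cases work for every $\alpha$.

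The genuine obstacle is $ht(\eta)=\omega$ under Definition 4.6 item 8 (a), where $c_f(\eta)$ is a colour of the tree $P^{\eta_2(m),\eta_3(m)}_{\eta_4(m)}$ and, a priori, may be an arbitrarily large ordinal below $\kappa$ even though the parameters $\eta_2(m),\eta_3(m),\eta_4(m)$ are all $<\alpha$ (they lie below some $\beta\le\alpha$ since $\eta\in J_f^{\alpha+1}$). To tame this I would introduce the auxiliary function $D\colon\kappa\to\kappa$, $D(\delta)=\sup\{c(\nu)+1\mid \nu\in(P^{a,b}_g)_\omega,\ a,b,g\le\delta\}$; because $\kappa$ is inaccessible there are $<\kappa$ such trees, each with $<\kappa$ many $\omega$-nodes and each colour $<\kappa$, so $D(\delta)<\kappa$ by regularity. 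Let $C=\{\alpha<\kappa\mid\forall\delta<\alpha\,(D(\delta)<\alpha)\}$, a club of (infinite) ordinals. For $\alpha\in C$ and $\eta$ as above, setting $\delta=\max(\eta_2(m),\eta_3(m),\eta_4(m))<\alpha$ gives $c_f(\eta)\le D(\delta)<\alpha<f(\alpha)$, whence $|\mathcal{A}_\eta|\le\lambda(T)+(\lambda_r(T)+c_f(\eta))^\omega\le|f(\alpha)|^{\aleph_0}=|f(\alpha)|$, once more a nice cardinal $\le\theta_\alpha$.

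Assembling the three cases with the product estimate then yields $|\mathcal{A}_f^{\alpha+1}|\le\theta_\alpha$ for every $\alpha\in C$, as required. The main difficulty is precisely isolating this club $C$: the $P$-colours are external to $f$, so there is no hope of dominating them by $f$ pointwise, and the role of the inaccessibility of $\kappa$ is exactly to make $D$ well-defined and $C$ a club, which forces the item-8(a) colours below $\alpha$. The arithmetic hypotheses on $f$, above all $|f(\beta)^\omega|=|f(\beta)|$, are what allow the unavoidable $\omega$-powers coming from the $s$-primary constructions to be absorbed, and the finiteness of each $u$ is what reduces $|A_u|$ to a single one of these $\omega$-closed cardinals.
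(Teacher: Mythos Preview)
Your proof is correct and follows essentially the same strategy as the paper's: both use the product bound $|\mathcal{A}_f^{\alpha+1}|\le|J_f^{\alpha+1}|\cdot\sup_u|\mathcal{A}_u|$, control $|J_f^{\alpha+1}|$ by $f(\alpha)$ via $f(\alpha)^\omega=f(\alpha)$, and pass to a club to tame the colours not directly of the form $f(\gamma)$. The paper takes the club $\{\alpha:\forall\gamma<\alpha\,(\gamma^\omega<\alpha\text{ and }\sup\{c_f(\eta):\eta\in J_f^\gamma\}<\alpha)\}$ and splits according to whether $\eta\in J_f^\alpha$ or $\eta\in J_f^{\alpha+1}\setminus J_f^\alpha$, using (essentially Claim~4.8) that in the latter case $\eta_1$ cannot be eventually constant at a positive value, so $c_f(\eta)=f(\alpha)$. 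You instead split on items 8(a)/8(b) of Definition~4.6 and close off the 8(a) colours with your auxiliary $D$; this avoids appealing to Claim~4.8 and is arguably cleaner.

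One small point to correct: as written, your $D(\delta)$ includes the triple $(a,b,g)=(0,0,0)$, and $P^{0,0}_0=(I_f,d_f)$ has colours $f(\gamma)$ for arbitrarily large $\gamma<\kappa$, so $D(\delta)$ would equal $\kappa$ rather than be $<\kappa$. This is harmless once noted: in case 8(a) one has $m\ge1$ and, by the remark after item~7, $Q(P^{\eta_2(m),\eta_3(m)}_{\eta_4(m)})=m\ge1$, so the tree in question is a genuine subtree of some $R(a,b,m)$ and is never $P^{0,0}_0$. Restrict $D$ to triples $(a,b,g)$ with $0<a<b$ and $g\neq0$ (the only ones Definition~4.5 actually enumerates) and your argument goes through unchanged.
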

\begin{proof}
Let $C$ be the club $\{\alpha<\kappa|\forall\gamma<\alpha(\gamma^\omega<\alpha\text{ and } sup(\{c_f(\eta)\}_{\eta\in J_f^\gamma})<\alpha)\}$. Assume $u$ is such that there is at least one $\xi\in u$ such that $f(\beta)=c_f(\xi)$ for some $\beta$, then by the previous remark $|A_u|=|\cup_{\eta\in u}\mathcal{A}_\eta|=max(\{c_f(\eta)^\omega\}_{\eta\in u})$. Since $\mathcal{A}_u$ is $s$-primary over $A_u$ we get \\ $|\mathcal{A}_u|\leq \lambda(T)+(|A_u|+\lambda_r(T))^\omega=max(\{c_f(\eta)^\omega\}_{\eta\in u})$. Therefore for every $\alpha<\kappa$ $$|\mathcal{A}_f^{\alpha+1}|\leq |J_f^{\alpha+1}|\cdot sup(\{(c_f(\eta)^\omega)^\omega\}_{\eta\in J_f^{\alpha+1}}),$$ if $\alpha\in C$ then $|J_f^{\alpha+1}|=\cup_{\beta\leq \alpha}\beta^\omega\leq\alpha^\omega\leq f(\alpha)^\omega=f(\alpha)$, so $$|\mathcal{A}_f^{\alpha+1}|\leq f(\alpha)\cdot sup(\{(c_f(\eta)^\omega)^\omega\}_{\eta\in J_f^{\alpha+1}}).$$ Also for every $\eta\in J_f^\alpha$, $c_f(\eta)<f(\beta)$ for some $\beta<\alpha$, therefore $$|\mathcal{A}_f^{\alpha+1}|\leq sup(\{f(\beta)\}_{\beta\leq\alpha},\{(c_f(\eta)^\omega)^\omega\}_{\eta\in J_f^{\alpha+1}\backslash J_f^{\alpha}}).$$ But every $\eta\in J_f^{\alpha+1}\backslash J_f^{\alpha}$ with $dom(\eta)=\omega$ has $rang(\eta_1)=\omega$ and $f(\alpha)=c_f(\eta)$, otherwise $rang(\eta_5)<\alpha$ and $\eta\in J_f^{\alpha}$. We conclude $|\mathcal{A}_f^{\alpha+1}|\leq sup(\{f(\beta)\}_{\beta\leq\alpha})$.
\end{proof}
\begin{fact}
If $w\leq u$, then $A_u \rhd_{\mathcal{A}_w}\mathcal{A}_u$.
\end{fact}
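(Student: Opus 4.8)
The plan is to read off what domination means and reduce the whole claim to a single application of Lemma 5.3 together with the standard domination behaviour of $s$-primary models. Recall that $A_u\rhd_{\mathcal{A}_w}\mathcal{A}_u$ asserts that for every set $D$, $D\downarrow_{\mathcal{A}_w}A_u$ implies $D\downarrow_{\mathcal{A}_w}\mathcal{A}_u$. So I would fix an arbitrary $D$ with $D\downarrow_{\mathcal{A}_w}A_u$ and try to conclude $D\downarrow_{\mathcal{A}_w}\mathcal{A}_u$. The first observation is that one may freely enlarge $A_u$ to $B_0:=\mathcal{A}_w\cup A_u$ without changing the hypothesis: by definition $D\downarrow_{\mathcal{A}_w}A_u$ says that $t(D,\mathcal{A}_w\cup A_u)$ does not fork over $\mathcal{A}_w$, which is verbatim the same statement as $D\downarrow_{\mathcal{A}_w}B_0$. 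Moreover $w\le u$ gives $\mathcal{A}_w\subseteq\mathcal{A}_u$ by Lemma 5.3(a), so domination of $\mathcal{A}_u$ over the base $\mathcal{A}_w$ is meaningful. Thus it suffices to show that $B_0$ dominates $\mathcal{A}_u$ over $\mathcal{A}_w$.

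For this I would apply Lemma 5.3(d) with $v=w$ (legitimate since $w\le u$), which tells us that $\mathcal{A}_u$ is $s$-primary over $\mathcal{A}_w\cup A_u=B_0$; in particular $\mathcal{A}_u$ is $s$-constructible over $B_0$, say via a construction $\mathcal{A}_u=B_0\cup\{a_i\mid i<\alpha\}$ in which each $t(a_i,B_0\cup\{a_j\mid j<i\})$ is $s$-isolated over a subset of power $<\lambda_r(T)$. Now I would invoke the domination property of the isolation notion $\mathbf{F}^s_{\lambda_r(T)}$ from \cite{HS98} (cf. also \cite{Sh}): the base of an $s$-primary model forking-dominates the model over any subset of that base. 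Concretely, one argues by induction along the construction, the point being the single-step statement: if $M'\supseteq\mathcal{A}_w$ already satisfies $D\downarrow_{\mathcal{A}_w}M'$ and $t(a,M')$ is $s$-isolated over a small subset of $M'$, then $a\downarrow_{M'}D$ (an $s$-isolated type has a unique nonforking extension and is therefore undisturbed by adding a side set independent over the base), whence $D\downarrow_{\mathcal{A}_w}M'\cup\{a\}$; taking unions at limits preserves this by finite character. With base $\mathcal{A}_w\subseteq B_0$ this yields $B_0\rhd_{\mathcal{A}_w}\mathcal{A}_u$.

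Putting the two steps together finishes the proof: $D\downarrow_{\mathcal{A}_w}A_u$ is identical to $D\downarrow_{\mathcal{A}_w}B_0$, and the domination of $\mathcal{A}_u$ by $B_0$ over $\mathcal{A}_w$ then gives $D\downarrow_{\mathcal{A}_w}\mathcal{A}_u$; since $D$ was arbitrary, $A_u\rhd_{\mathcal{A}_w}\mathcal{A}_u$. The routine reductions (base-insensitivity, $\mathcal{A}_w\subseteq\mathcal{A}_u$) are immediate, so the genuine content, and the only real obstacle, is the domination step of the second paragraph: that $s$-primariness over $B_0$ entails forking-domination by $B_0$ over a subbase. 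This is exactly what one wants to quote from the isolation machinery of \cite{HS98}; the only care needed is that the base $\mathcal{A}_w$ used for forking sits inside $B_0$, so that the inductive single-step preservation of independence applies without leaving the construction base.
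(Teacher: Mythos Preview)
Your approach coincides with the paper's: both use Lemma~5.3 to get that $\mathcal{A}_u$ is $s$-primary (hence $s$-constructible) over $\mathcal{A}_w\cup A_u$, and then invoke the standard domination property of $F^s_{\lambda_r(T)}$-constructions. The paper's proof is a single sentence citing exactly this.

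One point you gloss over that the paper makes explicit: the domination step genuinely needs that $\mathcal{A}_w$ is $\lambda_r(T)$-saturated. Your parenthetical justification of the inductive step (``an $s$-isolated type has a unique nonforking extension and is therefore undisturbed by adding a side set independent over the base'') is not quite right as stated: $s$-isolation of $t(a,M')$ only says $t(a,B)\vdash t(a,M')$ for some small $B\subseteq M'$, not that $t(a,M')$ is stationary. The way the induction actually goes through is via stationarity of types over the saturated model $\mathcal{A}_w$: from $D\downarrow_{\mathcal{A}_w}M'$ and saturation one can replace $D$ by a copy $D'\equiv_{M'}D$ with $D'\downarrow_{\mathcal{A}_w}M'a$, and then transport back. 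This is precisely why the paper records ``$\mathcal{A}_w$ is saturated'' as the second hypothesis. Since you are ultimately quoting the isolation machinery from \cite{HS98} and \cite{Sh} as a black box, your proof is fine; just be aware that the ``only care needed'' is not merely $\mathcal{A}_w\subseteq B_0$ but the saturation of $\mathcal{A}_w$.
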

\begin{proof}
If $w\leq u$, by Lemma 5.3, $\mathcal{A}_u$ is $s$-constructable over $\mathcal{A}_w\cup A_u$, and $\mathcal{A}_w$ is saturated, then $A_u \rhd_{\mathcal{A}_w}\mathcal{A}_u$. 
\end{proof}
{\bf Remark.} Notice that Fact 5.6 implies that $A_u \rhd_{\mathcal{A}_\eta}\mathcal{A}_u$ for every $\eta\in u$.
\begin{fact}
Assume $f\in \kappa^\kappa$. If $u,v\in \mathcal{I}_f$, then $\mathcal{A}_u\downarrow_{\mathcal{A}_{u\cap^*v}}\mathcal{A}_v$.
\end{fact}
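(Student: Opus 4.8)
The plan is to deduce this freeness statement from the strong independence of the family $\{\mathcal{A}_u\}_{u\in\mathcal{I}_f}$, which is exactly item (f) of Lemma 5.3. Write $w=u\cap^* v$. First I would record two structural facts. Since $w\le u$ and $w\le v$, item (a) of Lemma 5.3 gives $\mathcal{A}_w\subseteq\mathcal{A}_u$ and $\mathcal{A}_w\subseteq\mathcal{A}_v$; and since each $\mathcal{A}_x$ is $s$-primary over $A_x$ (item (b)), each $\mathcal{A}_x$ is an elementary submodel of the monster model, so in particular $\mathcal{A}_w\preceq\mathcal{A}_v$ are models. By the finite character of non-forking it suffices to prove $\bar a\downarrow_{\mathcal{A}_w}\mathcal{A}_v$ for an arbitrary finite tuple $\bar a\subseteq\mathcal{A}_u$; equivalently, that $p=t(\bar a,\mathcal{A}_v)$ does not fork over $\mathcal{A}_w$.

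Since $T$ is stable and $\mathcal{A}_w\preceq\mathcal{A}_v$ are models, it is enough to show that $p$ is finitely satisfiable in $\mathcal{A}_w$, because in a stable theory a type over a model that is finitely satisfiable in an elementary submodel does not fork over that submodel. So let $\varphi(\bar x,\bar c)\in p$, with $\bar c\subseteq\mathcal{A}_v$ finite and $\models\varphi(\bar a,\bar c)$; I must produce a realization inside $\mathcal{A}_w$. This is exactly where strong independence enters. I would apply the second clause of Definition 5.1 to the family $\{\mathcal{A}_x\}$ with base index $v$, index list $(u,v)$ (so $n=2$), and $B=\bar a\cup\bar c$; note that $B\subseteq\mathcal{A}_u\cup\mathcal{A}_v$ and that $B$ is finite, so $|B|<\lambda_r(T)$. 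This yields an automorphism $g=f^{\{\mathcal{A}_x\},B}_{v,u,v}$ of the monster model with $g\restriction(B\cap\mathcal{A}_v)=\mathrm{id}$ and $g(B\cap\mathcal{A}_u)\subseteq\mathcal{A}_{v\cap^* u}$.

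Because $\cap^*$ is symmetric (as $r(\eta,\xi)=r(\xi,\eta)$ and the two defining clauses are symmetric in $u$ and $v$), we have $\mathcal{A}_{v\cap^* u}=\mathcal{A}_{u\cap^* v}=\mathcal{A}_w$. Now $\bar c\in B\cap\mathcal{A}_v$ gives $g(\bar c)=\bar c$, while $\bar a\in B\cap\mathcal{A}_u$ gives $g(\bar a)\in\mathcal{A}_w$. Applying $g$ to $\models\varphi(\bar a,\bar c)$ yields $\models\varphi(g(\bar a),\bar c)$, so $g(\bar a)\in\mathcal{A}_w$ is the desired realization of $\varphi(\bar x,\bar c)$. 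Hence $p$ is finitely satisfiable in $\mathcal{A}_w$, so $p$ does not fork over $\mathcal{A}_w$, and letting $\bar a$ range over all finite tuples of $\mathcal{A}_u$ gives $\mathcal{A}_u\downarrow_{\mathcal{A}_w}\mathcal{A}_v$. The only genuinely delicate point is the bookkeeping in the application of strong independence: one must take the base index to be $v$ (the side one wants to keep fixed) and put both $u$ and $v$ into the index list, so that $\bar c$ lies in $B\cap\mathcal{A}_v$ and is fixed while $\bar a$ lies in $B\cap\mathcal{A}_u$ and is pushed into $\mathcal{A}_{u\cap^* v}$. The remaining ingredients—that the $\mathcal{A}_x$ are models and that finite satisfiability over a submodel implies non-forking—are standard for the isolation notion $F^s_{\lambda_r(T)}$ and for stable $T$.
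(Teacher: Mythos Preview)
Your proof is correct, but it takes a genuinely different route from the paper. The paper does not invoke Lemma~5.3(f) at all; instead it reduces to showing $A_u\downarrow_{\mathcal{A}_{u\cap^*v}}A_v$ via the domination of Fact~5.6, then goes back into the tree structure: it uses the built-in independence $A_{U(\xi,\alpha)}\downarrow_{\mathcal{A}_\xi}A_{V(\xi,\alpha)}$ from the construction of the $A_u$'s, patches these together by transitivity to get $A_u\downarrow_{A_{u\cap^*v}}A_v$, and finally passes to base $\mathcal{A}_{u\cap^*v}$ using the in-fact clause of Lemma~5.3(d). Your argument, by contrast, treats strong independence of $\{\mathcal{A}_u\}$ as a black box and extracts nonforking directly via a coheir argument: the automorphism from Definition~5.1 pushes any finite $\bar a\subseteq\mathcal{A}_u$ into $\mathcal{A}_{u\cap^*v}$ while fixing parameters from $\mathcal{A}_v$, so $t(\bar a,\mathcal{A}_v)$ is finitely satisfiable in the model $\mathcal{A}_{u\cap^*v}$ and hence does not fork over it. This is cleaner and more conceptual---it shows that Fact~5.7 is a formal consequence of strong independence for any family of models, with no appeal to the particular tree construction or to Fact~5.6---whereas the paper's proof stays closer to the concrete combinatorics of the $J_f$-indexed family and makes the source of the independence visible.
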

\begin{proof}
By the previous fact it is enough to prove $A_u\downarrow_{\mathcal{A}_{u\cap^*v}}A_v$.\\
We will define $U(\xi,\alpha)$ and $V(\xi,\alpha)$ for every $\xi\in u\cap^*v$ and $\alpha\in \mathcal{X}$ such that $\xi^\frown \langle dom(\xi),\alpha\rangle$ is extended by an element of $u$ and non of the elements of $v$ extend it. Denote by $U(\xi,\alpha)$ the set of those $\eta\in J_f$ that extends $\xi^\frown \langle dom(\xi),\alpha\rangle$ and $V(\xi,\alpha)=J_f\backslash U(\xi,\alpha)$.\\
By the construction of the sets $\mathcal{A}_u$, for every $\xi \in u\cap^*v$ and every $\alpha\in \mathcal{X}$ such that $U(\xi,\alpha)$ and $V(\xi,\alpha)$ are defined, we have
\begin{equation}
A_{U(\xi,\alpha)} \downarrow_{\mathcal{A}_\xi} A_{V(\xi,\alpha)}.
\end{equation}
Let $U=\cup \{U(\xi,\alpha)|\{\xi^\frown \langle dom(\xi),\alpha\rangle\}\leq u, \xi\in u\cap^*v, \alpha\in \mathcal{X}\}$ and $V$ be $J_f\backslash U$. Then by transitivity and (4),
$$ A_U \downarrow_{A_{u\cap^*v}} A_V.$$ 
By the way $U$ and $V$ were defined, $u\subseteq U$ and $v\subseteq V$. Therefore $$A_u\downarrow_{A_{u\cap^*v}}A_v.$$ By the in fact part of Lemma 5.3 (d), $\mathcal{A}_{u\cap^*v}\downarrow_{A_{u\cap^*v}}\cup_{\xi\in J_f}\mathcal{A}_\xi$. Therefore $\mathcal{A}_{u\cap^*v}\downarrow_{A_{u\cap^*v}}A_uA_v$ and we conclude $A_u\downarrow_{\mathcal{A}_{u\cap^*v}}A_v$.
\end{proof}
\begin{lema}
Assume $f\in \kappa^\kappa$ is such that for every $\alpha$, $f(\alpha)>\lambda_r(T)$, $f(\alpha)^\omega=f(\alpha)$, and $rang(f)\subset Card$. If $\eta\in (J_f)_\omega$ is such that $c_f(\eta)=f(\alpha)$, then $dim(p_\eta^f, \mathcal{A}^f)=c_f(\eta)$.
\end{lema}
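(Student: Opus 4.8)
The plan is to establish the two inequalities $\dim(p_\eta^f,\mathcal{A}^f)\ge c_f(\eta)$ and $\dim(p_\eta^f,\mathcal{A}^f)\le c_f(\eta)$ separately. The lower bound is immediate from the construction of $\mathcal{A}_\eta$: the sequence $\{a_i\}_{i<c_f(\eta)}$ is an $\mathbb{A}(\eta)$-independent family of realizations of $p_\eta^f=t(H_\eta(a),\mathbb{A}(\eta))$, and it lies inside $\mathcal{A}_\eta\subseteq\mathcal{A}^f$; hence any maximal independent set of realizations of $p_\eta^f$ in $\mathcal{A}^f$ has size at least $c_f(\eta)$. The whole difficulty therefore lies in the upper bound, and my plan is to reduce it to a computation inside $\mathcal{A}_\eta$, where it is trivial: since $c_f(\eta)=f(\alpha)$ satisfies $f(\alpha)>\lambda_r(T)$ and $f(\alpha)^\omega=f(\alpha)$, the Remark following the construction gives $|\mathcal{A}_\eta|=c_f(\eta)$, so certainly $\dim(p_\eta^f,\mathcal{A}_\eta)\le c_f(\eta)$.

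First I would sharpen the orthogonality that the OCP provides. The construction yields $p_\eta^f\perp\mathcal{A}_{\eta\restriction i}$ for every $i<\omega$, and I would upgrade this to $p_\eta^f\perp\mathcal{A}_\xi$ for every $\xi\in J_f$ that is not an initial segment of $\eta$. Indeed, for such $\xi$ the meet $r(\eta,\xi)$ equals $\eta\restriction m$ for some finite $m$, so by Fact 5.7 (via the $U(\xi,\alpha),V(\xi,\alpha)$ decomposition appearing in its proof, applied to each pair $\{\xi\},\{\eta\restriction i\}$ with $i\ge m$ and combined by finite character) one has $\mathcal{A}_\xi\downarrow_{\mathcal{A}_{\eta\restriction m}}\mathbb{A}(\eta)$; since $p_\eta^f\perp\mathcal{A}_{\eta\restriction m}$, orthogonality transfers along this non-forking independence to give $p_\eta^f\perp\mathcal{A}_\xi$. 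By Lemma 5.3(e) the model $\mathcal{A}^f$ is $s$-constructible over $\mathcal{A}_\eta$ together with $D:=\bigcup\{A_\xi\mid \xi\in J_f,\ \xi\text{ not an initial segment of }\eta\}$, and every piece of $D$ is orthogonal to $p_\eta^f$.

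The key claim is then that every realization $b\in\mathcal{A}^f$ of $p_\eta^f$ satisfies $b\not\downarrow_{\mathbb{A}(\eta)}\mathcal{A}_\eta$. Suppose not; then $b\downarrow_{\mathbb{A}(\eta)}\mathcal{A}_\eta$, and since $p_\eta^f$ is orthogonal to every piece of $D$ while $\mathcal{A}^f$ is $s$-constructible over $\mathcal{A}_\eta\cup D$, the orthogonality calculus forces $b\downarrow_{\mathbb{A}(\eta)}\mathcal{A}^f$. As $b\in\mathcal{A}^f$ this means $b\downarrow_{\mathbb{A}(\eta)}b$, i.e. $b\in\mathrm{acl}(\mathbb{A}(\eta))$, contradicting that $p_\eta^f$ is non-algebraic (a consequence of the OCP). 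Hence every realization of $p_\eta^f$ in $\mathcal{A}^f$ is non-forking-dependent on $\mathcal{A}_\eta$ over $\mathbb{A}(\eta)$. Using the regularity of $p_\eta^f$, which follows from the orthogonalities $p_\eta^f\perp\mathcal{A}_{\eta\restriction i}$, an $\mathbb{A}(\eta)$-independent set of such realizations cannot exceed the $p_\eta^f$-dimension of $\mathcal{A}_\eta$, whence $\dim(p_\eta^f,\mathcal{A}^f)\le\dim(p_\eta^f,\mathcal{A}_\eta)\le c_f(\eta)$. Together with the lower bound this gives $\dim(p_\eta^f,\mathcal{A}^f)=c_f(\eta)$.

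I expect the main obstacle to be the orthogonality-and-domination bookkeeping underlying the two transfers above: justifying that adjoining the $p_\eta^f$-orthogonal material $D$ through the $s$-constructible closure of Lemma 5.3 genuinely preserves the non-forking independence $b\downarrow_{\mathbb{A}(\eta)}\mathcal{A}^f$, and making precise the passage from the statement that every realization depends on $\mathcal{A}_\eta$ to the dimension bound. Both rely on the finite-character isolation axioms of $F^s_{\lambda_r(T)}$ from \cite{HS98} and on the regularity of $p_\eta^f$, so the careful point is to verify that $p_\eta^f$ is regular and that the orthogonality calculus applies uniformly to the $s$-isolation notion rather than only to the classical $a$-isolation.
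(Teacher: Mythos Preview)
Your core idea is the same as the paper's: show that no realization $b\in\mathcal{A}^f$ of $p_\eta^f$ can satisfy $b\downarrow_{\mathbb{A}(\eta)}\mathcal{A}_\eta$, and then use $|\mathcal{A}_\eta|=c_f(\eta)$ to cap the dimension. The difference is in how the key claim is established. You work globally: you upgrade orthogonality to every $\mathcal{A}_\xi$ with $\xi$ not below $\eta$, invoke Lemma~5.3(e) to get $\mathcal{A}^f$ $s$-constructible over $\mathcal{A}_\eta\cup D$, and then argue via domination that $b\downarrow_{\mathcal{A}_\eta}D$ propagates to $b\downarrow_{\mathcal{A}_\eta}\mathcal{A}^f$. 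The paper instead works locally: it picks a single finite $u\in\mathcal{I}_f$ with $\eta\in u$ and $b\in\mathcal{A}_u$, uses Fact~5.7 to find $i<\omega$ with $\mathcal{A}_{u\setminus\{\eta\}}\downarrow_{\mathcal{A}_{\eta\restriction i}}\mathcal{A}_\eta$, and then the orthogonality $t(b,\mathcal{A}_\eta)\perp\mathcal{A}_{\eta\restriction i}$ together with the domination $A_u\rhd_{\mathcal{A}_\eta}\mathcal{A}_u$ (Fact~5.6) gives $b\downarrow_{\mathcal{A}_\eta}\mathcal{A}_u$ directly. This bypasses the bookkeeping you flag as the main obstacle: no global $s$-constructibility over $\mathcal{A}_\eta\cup D$ is needed, only the finite piece where $b$ actually lives.

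One point to drop: your appeal to the \emph{regularity} of $p_\eta^f$ is both unnecessary and not obviously justified. Orthogonality to each $\mathcal{A}_{\eta\restriction i}$ does not by itself yield regularity. More importantly, you do not need it: once you know every realization in $\mathcal{A}^f$ forks with $\mathcal{A}_\eta$ over $\mathbb{A}(\eta)$, the standard stability fact that in an $\mathbb{A}(\eta)$-independent set at most $|\mathcal{A}_\eta|+\lambda_r(T)$ many elements can fork with $\mathcal{A}_\eta$ already gives $\dim(p_\eta^f,\mathcal{A}^f)\le|\mathcal{A}_\eta|=c_f(\eta)$. The paper phrases this as: an independent sequence of size $>c_f(\eta)$ must contain some $b\notin\mathcal{A}_\eta$ with $b\downarrow_{\mathbb{A}(\eta)}\mathcal{A}_\eta$, which then yields the contradiction.
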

\begin{proof}
Suppose not. Then there exists an independent sequence $I\subseteq \mathcal{A}^f$ over $\mathbb{A}(\eta)$ such that $|I|>c_f(\eta)$ and $a\models p_\eta^f$ for every $a\in I$. By a previous remark we know that $c_f(\eta)=|\mathcal{A}_\eta|$, so there exists $b\in I\backslash \mathcal{A}_\eta$ such that $b\downarrow_{\mathbb{A}(\eta)}\mathcal{A}_\eta$. Thus $t(b,\mathcal{A}_\eta)\perp \mathcal{A}_{\eta\restriction_i}$ for all $i<\omega$.\\
There exists $u\in \mathcal{I}_f$ such that $\eta\in u$ and $b\in \mathcal{A}_u$. By Fact 5.7 we know that there exists $i<\omega$ such that $\mathcal{A}_{u\backslash \{\eta\}} \downarrow_{\mathcal{A}_{\eta\restriction_i}}\mathcal{A}_\eta$.\\
Since $t(b,\mathcal{A}_\eta)\perp \mathcal{A}_{\eta\restriction_i}$, $b\downarrow_{\mathcal{A}_\eta}A_{u\backslash \{\eta\}}$. So $b\downarrow_{\mathcal{A}_\eta}A_u$ and by a previous remark we know that $A_u \rhd_{\mathcal{A}_\eta}\mathcal{A}_u$, thus $b\downarrow_{\mathcal{A}_\eta}\mathcal{A}_u$. But $b\in \mathcal{A}_u$, so $t(b,\mathcal{A}_\eta)$ is algebraic. By the choice of $b$, $t(b,\mathcal{A}_\eta)$ is a non-forking extension of $p_\eta^f$. This implies that $p_\eta^f$ is algebraic. By the OCP, $p_\eta^f$ is not algebraic; a contradiction.
\end{proof}
The Theorem 5.9 gives a reduction only for certain elements of $\kappa^\kappa$, as we will see in Corollary 5.10, this can be easy generalize to all the elements of $\kappa^\kappa$.
\begin{teorema}
Assume $f,g$ are functions from $\kappa$ to $Card\backslash \lambda_r(T)$, that satisfy for every $\beta<\kappa$, $f(\beta)^\omega=f(\beta)$, $g(\beta)^\omega=g(\beta)$ and for every cardinal $\alpha$, $f(\alpha)>\alpha^{++}$, $g(\alpha)>\alpha^{++}$. Then the models $\mathcal{A}^f$ and $\mathcal{A}^g$ are isomorphic if and only if $f$ and $g$ are $E^\kappa_{\omega\text{-club}}$ related.
\end{teorema}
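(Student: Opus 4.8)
The strategy is to prove the two implications separately, using Lemma 4.7 to pass between $f\ E^\kappa_{\omega\text{-club}}\ g$ and $J_f\cong J_g$, and Lemma 5.8 to pass between the colours $c_f(\eta)$ and the dimensions $\dim(p^f_\eta,\mathcal{A}^f)$. For the direction $f\ E^\kappa_{\omega\text{-club}}\ g\Rightarrow \mathcal{A}^f\cong\mathcal{A}^g$, Lemma 4.7 gives a colour-preserving tree isomorphism $\Phi\colon J_f\to J_g$, so $c_f(\eta)=c_g(\Phi(\eta))$ for every $\eta\in(J_f)_\omega$. I would lift $\Phi$ to an isomorphism $\mathcal{A}^f\to\mathcal{A}^g$ by recursion along an enumeration of $\mathcal{I}_f=\mathcal{P}_\omega(J_f)$ respecting $\leq$, building partial isomorphisms that carry $\mathcal{A}^f_u$ onto $\mathcal{A}^g_{\Phi[u]}$. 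Every $\mathcal{A}^f_\eta$ with $ht(\eta)<\omega$ is a $\lambda_r(T)$-saturated model of power $\lambda_r(T)$, hence isomorphic to the fixed model $\mathcal{A}_{ht(\eta)}$ witnessing the OCP, and the same on the $g$-side, so the base maps exist; the recursion step is carried by the strong independence of the families (Fact 5.7), which permits free amalgamation, together with the uniqueness of $s$-primary models provided by the construction behind Lemma 5.3. At a node $\eta$ of height $\omega$ both $\mathcal{A}^f_\eta$ and $\mathcal{A}^g_{\Phi(\eta)}$ are $s$-primary over their chain $\mathbb{A}(\cdot)$ together with an independent set of realizations of $p^f_\eta$, resp. $p^g_{\Phi(\eta)}$, and these sets have the same size $c_f(\eta)=c_g(\Phi(\eta))$; since the partial map already carries $\mathbb{A}(\eta)$ onto $\mathbb{A}(\Phi(\eta))$ and, by homogeneity of the monster model, $p^f_\eta$ onto $p^g_{\Phi(\eta)}$, uniqueness of $s$-primary models yields the extension, and the union over $\mathcal{I}_f$ is the desired isomorphism.

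For the converse, fix $G\colon\mathcal{A}^f\to\mathcal{A}^g$. The levels of the continuous increasing filtrations $(\mathcal{A}^\alpha_f)_{\alpha<\kappa}$ and $(\mathcal{A}^\alpha_g)_{\alpha<\kappa}$ have size $<\kappa$ by Fact 5.5 (this is where the hypotheses $rang(f)\subseteq Card$, $f(\beta)^\omega=f(\beta)$ and $f(\beta)>\beta^{++}$ keep the cardinal bookkeeping under control), so a standard closure-point argument gives a club $C$ with $G[\mathcal{A}^\alpha_f]=\mathcal{A}^\alpha_g$ for all $\alpha\in C$; being an isomorphism, $G$ also transports orthogonality and dimension of types to their $G$-images. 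Let $C_f,C_g$ be the $\omega$-clubs furnished by the Remark at the end of Section 4, and put $D=\{\alpha\in C\cap C_f\cap C_g : cf(\alpha)=\omega\}$, again an $\omega$-club. It suffices to prove $f(\alpha)=g(\alpha)$ for every $\alpha\in D$, for then $\{\alpha : f(\alpha)=g(\alpha)\}$ contains an $\omega$-club and $f\ E^\kappa_{\omega\text{-club}}\ g$.

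Fix $\alpha\in D$ and, using the Remark, choose $\eta\in(J_f)_\omega$ with $sup(rang(\eta_5))=\alpha$ and $c_f(\eta)=f(\alpha)$; then $\mathbb{A}(\eta)\subseteq\mathcal{A}^\alpha_f$, the type $p^f_\eta$ is orthogonal to $\mathcal{A}^\beta_f$ for all $\beta<\alpha$, and $\dim(p^f_\eta,\mathcal{A}^f)=f(\alpha)$ by Lemma 5.8. Then $q=G(p^f_\eta)$ is based inside $\mathcal{A}^\alpha_g$, is orthogonal to every $\mathcal{A}^\beta_g$ with $\beta<\alpha$, and has $\dim(q,\mathcal{A}^g)=f(\alpha)$; being the image of a regular type it is non-orthogonal to some $p^g_{\eta'}$, whence $c_g(\eta')=\dim(p^g_{\eta'},\mathcal{A}^g)=f(\alpha)$ because non-orthogonal regular types share the same dimension in $\mathcal{A}^g$. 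Orthogonality of $q$ to $\mathcal{A}^{<\alpha}_g$ forbids $\eta'$ from appearing below $\alpha$, while the fact that the $f(\alpha)$ independent realizations of $q$ lie in $G[\mathcal{A}^{\alpha+1}_f]$, a set of size $<\kappa$ that the closure club can be taken to localize just above level $\alpha$, forbids $\eta'$ from appearing far above $\alpha$; this pins $sup(rang(\eta'_5))=\alpha$, so $\eta'$ realizes item 8(b) of Definition 4.6 and $c_g(\eta')=g(\alpha)$, giving $f(\alpha)=g(\alpha)$.

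The delicate point, which I expect to be the \emph{main obstacle}, is exactly this last step of the converse: isolating, among all regular types that become realized at level $\alpha$, the contribution $g(\alpha)$ of the single item-8(b) branch from the contributions of the item-8(a) branches, whose colours are coded identically in $J_f$ and $J_g$ and need not be smaller than $g(\alpha)$. Making this separation rigorous requires comparing, via $G$, the whole family of fresh regular types at level $\alpha$ (matching non-orthogonality classes and their dimensions as a multiset) and using the largeness hypotheses $f(\alpha)>\alpha^{++}$ and $f(\alpha)^\omega=f(\alpha)$, together with Fact 5.5, to guarantee that the item-8(b) dimension cannot be absorbed into the coded-colour part; a symmetric argument applied to $G^{-1}$ then upgrades a mere inequality to $f(\alpha)=g(\alpha)$. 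The remaining steps are routine bookkeeping with orthogonality, dimension, and $s$-primary models.
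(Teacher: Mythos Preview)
Your right-to-left direction is essentially the paper's argument: lift the coloured-tree isomorphism $J_f\cong J_g$ to $\mathcal{A}^f\cong\mathcal{A}^g$ by recursion along an enumeration of $\mathcal{I}_f$, using uniqueness of $s$-primary models at each step and strong independence (Fact 5.7) to show the union stays elementary.

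Your left-to-right direction, however, takes a different route from the paper and has a genuine gap---precisely the one you flag as the ``main obstacle''. You try to argue directly that $f(\alpha)=g(\alpha)$ on an $\omega$-club by transporting $p^f_\eta$ via the isomorphism, finding some $p^g_{\eta'}$ non-orthogonal to it, and concluding $c_g(\eta')=f(\alpha)$ from equality of dimensions of non-orthogonal regular types. But the paper never establishes that the types $p^f_\eta$ are regular, so the ``non-orthogonal regular types have equal dimension'' step is unsupported; and your attempt to pin $\sup(\mathrm{rang}(\eta'_5))=\alpha$ and to separate 8(a) from 8(b) contributions is, as you yourself say, not rigorous.

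The paper sidesteps all of this with a completely different idea. It argues by contradiction: assume $f,g$ are \emph{not} $E^\kappa_{\omega\text{-club}}$-related, so the set $\{\alpha:f(\alpha)\neq g(\alpha)\}$ meets every $\omega$-club, and pick a single $\alpha$ of cofinality $\omega$ with (say) $g(\alpha)>f(\alpha)$, $\Pi(\mathcal{A}^\alpha_g)=\mathcal{A}^\alpha_f$, and $|\mathcal{A}^{\alpha+1}_f|\le\sup_{\beta\le\alpha}f(\beta)=f(\alpha)<g(\alpha)$. Take $\eta\in J_g^{\alpha+1}$ with $c_g(\eta)=g(\alpha)$; since $\dim(\Pi(p^g_\eta),\mathcal{A}^f)=g(\alpha)>|\mathcal{A}^{\alpha+1}_f|$, there is $b\models\Pi(p^g_\eta)$ free over $\mathcal{A}^{\alpha+1}_f$, lying in some $\mathcal{A}_{u_0}$. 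Now the key step you are missing: using only the combinatorics of $J_f$ (every node of finite height has $\kappa$ successors, and the trees above any two successors are coloured-tree isomorphic), one produces $g(\alpha)^+$ many finite sets $u_i\in\mathcal{I}_f$ whose closures $\bar u_i$ are pairwise coloured-tree isomorphic to $\bar u_0$ and agree exactly on $\bar u_0\cap J_f^{\alpha+1}$. The already-proved forward direction then yields isomorphisms $h_i$ fixing $\mathcal{A}^{\alpha+1}_f$, and the images $b_i=h_i(b)$ form an independent sequence of length $g(\alpha)^+$ realizing $\Pi(p^g_\eta)$, contradicting $\dim(\Pi(p^g_\eta),\mathcal{A}^f)=g(\alpha)$. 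This tree-multiplication trick---not any analysis of non-orthogonality classes---is the heart of the argument.
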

\begin{proof}
From right to left.\\
By Lemma 4.7 if $f\ E^\kappa_{\omega\text{-club}}\ g$ then $J_f\cong J_g$. Let $G:J_f \rightarrow J_g$ be an isomorphism. \\
We will construct, using induction, a family of function $\{F_u\}_{u\in \mathcal{I}_f}$ such that $F_u:\mathcal{A}_u\rightarrow\mathcal{A}_{G[u]}$ is an isomorphism and $\cup_{v<u}F_v\subseteq F_u$. Notice that this is equivalent to: a family of function $\{F_u\}_{u\in \mathcal{I}_f}$ such that $F_u:\mathcal{A}_u\rightarrow\mathcal{A}_{G[u]}$ is an isomorphism and for every $W\subseteq\mathcal{I}_f$, $\cup_{v\in W}F_v:\cup_{v\in W}\mathcal{A}_v\rightarrow\mathcal{A}^g$ is an elementary map.\\
Let $\{u_i:i<\alpha^*\}$ be the enumeration of $\mathcal{I}_f$ used in the construction of the models $\mathcal{A}_u$ (see Lemma 5.3).\\
Our induction hypothesis for $\beta<\alpha^*$ is the following:\\
The functions $\{F_{u_i}\}_{i<\beta}$ satisfy
\begin{itemize}
\item For all $i<\beta$, $F_{u_i}:\mathcal{A}_{u_i}\rightarrow\mathcal{A}_{G[u_i]}$ is an isomorphism.
\item $\cup_{i<\beta}F_{u_i}$ is an elementary map. 
\end{itemize}
By Lemma 5.3, $\mathcal{A}_{u_\beta}$ and $\mathcal{A}_{G[u_\beta]}$ are $s$-primary over $\cup_{v<u_\beta}\mathcal{A}_v$ and $\cup_{v<u_\beta}\mathcal{A}_{G[v]}$ respectively. By the induction hypothesis $\cup_{v<u_\beta}F_v$ is elementary and onto $\cup_{v<u_\beta}\mathcal{A}_{G[v]}$. Since the $s$-primary models over $\cup_{v<u_\beta}\mathcal{A}_v$ are isomorphic and the $s$-primary models over $\cup_{v<u_\beta}\mathcal{A}_{G(v)}$ are isomorphic, there is an isomorphism from $\mathcal{A}_{u_\beta}$ to $\mathcal{A}_{G[u_\beta]}$ that extends $\cup_{v<u_\beta}F_v$. Let us define $F_{u_\beta}$ as this isomorphism.\\
We will prove that $\cup_{i\leq\beta}F_{u_i}$ is elementary by proving that for every $n<\omega$ and every sequence $x_0,x_1,\ldots x_n\in\{u_i|i\leq\beta\}$, the map $\cup_{i\leq n}F_{x_i}:\cup_{i\leq n}\mathcal{A}_{x_i}\rightarrow\mathcal{A}^g$ is elementary.\\
Clearly we can assume that $n>0$, $x_n=u_\beta$, $u_\beta$ is not comparable with $x_{n-1}$, and $u_i\neq u_j$ for every $i\neq j$.\\
Define $u'=\cup_{i<n}(x_n\cap^* x_i)$, notice that $u'\leq u_\beta$.\\ \\
{\bf Case $u'< u_\beta$}\\ 
Let $X=\cup_{i<n}x_i$, by Fact 5.7 $\mathcal{A}_{u_\beta}\downarrow_{\mathcal{A}_{u'}}\mathcal{A}_X$, therefore 
\begin{equation}
\mathcal{A}_{u_\beta}\downarrow_{\mathcal{A}_{u'}}\cup_{i<n}\mathcal{A}_{x_i}.
\end{equation} 
Since $G$ is an isomorphism, $G[u]\cap^* G[v]=G[u\cap^*v]$ for every $u,v\in \mathcal{I}_f$. By Fact 5.7 $\mathcal{A}_{G[u_\beta]}\downarrow_{\mathcal{A}_{G[u']}}\mathcal{A}_{G[X]}$, therefore 
\begin{equation}
\mathcal{A}_{G[u_\beta]}\downarrow_{\mathcal{A}_{G[u']}}\cup_{i<n}\mathcal{A}_{G[x_i]}.
\end{equation} 
By the induction hypothesis $\cup_{i<\beta}F_{u_i}$ is elementary and thus there exists an automorphism of the monster model $F$ that extends $\cup_{i<\beta}F_{u_i}$. By (6) 
\begin{equation}
F^{-1}[\mathcal{A}_{G[u_\beta]}]\downarrow_{\mathcal{A}_{u'}}\cup_{i<n}\mathcal{A}_{x_i}.
\end{equation} 
Since $F$ and $F_{u_\beta}$ both extend $F_{u'}$ we conclude $t(\mathcal{A}_{u_\beta},\mathcal{A}_{u'})=t(F^{-1}[\mathcal{A}_{G[u_\beta]}],\mathcal{A}_{u'})$ and it is a stationary type. So by (5) and (7), the types $t(\mathcal{A}_{u_\beta}, \cup_{i<n}\mathcal{A}_{x_i})$ and $t(F^{-1}[\mathcal{A}_{G[u_\beta]}], \cup_{i<n}\mathcal{A}_{x_i})$ are equal, therefore
$$t(\mathcal{A}_{u_\beta}\ ^\frown \cup_{i<n}\mathcal{A}_{x_i}, \emptyset)=t(\mathcal{A}_{G[u_\beta]}\ ^\frown \cup_{i<n}F_{x_i}[\mathcal{A}_{x_i}], \emptyset).$$ Therefore $\cup_{i\leq n}F_{x_i}$ is elementary.\\ \\
{\bf Case $u'= u_\beta$}\\
Let $(a_0,a_1,\ldots , a_n)$ be any tuple such that for all $i\leq n$, $a_i\in \mathcal{A}_{x_i}$. Define $\mathcal{A}'=\cup_{v<u_\beta}\mathcal{A}_v$ and $$F'=\bigcup_{v<u_\beta\text{ or }v\in\{x_i|i<n\}}F_v$$ By the induction hypothesis $F'$ is elementary and by Lemma 5.3 $\mathcal{A}_{u_\beta}$ is $s$-constructible over $\mathcal{A}'$, therefore $\mathcal{A}_{u_\beta}$ is $s$-atomic over $\mathcal{A}'$. Then there is $A'\subseteq \mathcal{A}'$ of size less than $\lambda_r(T)$ such that $t(a_n, A')\vdash t(a_n,\mathcal{A}')$. By Lemma 5.3 $\{\mathcal{A}_u|u\in \mathcal{I}_f\}$ is a strongly independent family. Let $B=A'\cup\{a_i|{i<n}\}$, there is an automorphism of the monster model $H$, that satisfies $H\restriction_{A'}=id$ (notice that $A'=B\cap\mathcal{A}_{u_\beta}$) and $H(a_i)\in \mathcal{A}_{x_n\cap^*x_i}$ for every $i<n$, therefore $H(a_i)\in\mathcal{A}'$. Since $$t(a_n, A')\vdash t(a_n,\mathcal{A}')$$ and $$t(F_{u_\beta}(a_n),F_{u_\beta}(A'))\vdash t(F_{u_\beta}(a_n),F'(\mathcal{A}'))$$ so $$t(F_{u_\beta}(a_n),F_{u_\beta}(A'))\vdash t(F_{u_\beta}(a_n),\cup_{i<n}F_{x_i}(a_i)).$$ We conclude that $\cup_{i\leq n}F_{x_i}$ is elementary.\\ \\
We conclude that $\mathcal{A}^f\cong\mathcal{A}^g$.\\ \\
From left to right.\\
Let us assume that $f$ and $g$ are not $E^\kappa_{\omega\text{-club}}$ related but there is an isomorphism $\Pi:\mathcal{A}^g\rightarrow\mathcal{A}^f$. \\
By a previous remark we know that $\{\alpha<\kappa|\exists \eta\in J^{\alpha+1}_g (c_g(\eta)=g(\alpha))\}$ contains an $\omega$-club and by Fact 5.5 there is a club such that for every $\alpha$ in it, $|\mathcal{A}_g^{\alpha+1}|\leq sup(\{g(\beta)\}_{\beta\leq \alpha})$ (this also holds for $f$). Therefore, there is an $\omega$-club such that every element of it, $\alpha$, satisfies
\begin{itemize}
\item $|\mathcal{A}_f^{\alpha+1}|\leq sup(\{f(\beta)\}_{\beta\leq \alpha})$.
\item $|\mathcal{A}_g^{\alpha+1}|\leq sup(\{g(\beta)\}_{\beta\leq \alpha})$.
\item There exists $\eta\in J^{\alpha+1}_g$ and $\xi\in J^{\alpha+1}_f$ that satisfy $sup(rang(\eta_5))=sup(rang(\xi_5))=\alpha$, $c_g(\eta)=g(\alpha)$, and $c_f(\xi)=f(\alpha)$.
\end{itemize}
Since $\{\alpha<\kappa|\forall\beta<\alpha (f(\beta),g(\beta)<\alpha)\}$ and $\{\alpha<\kappa|\Pi(\mathcal{A}_g^{\alpha})= \mathcal{A}_f^{\alpha}|\}$ are clubs, and $f$ and $g$ are not $E^\kappa_{\omega\text{-club}}$ related (the set $\{\alpha<\kappa|f(\alpha)\neq g(\alpha)\}$ intersect every $\omega$-club), we can assume the existence of an ordinal $\alpha$ with countable cofinality such that:
\begin{itemize}
\item For every $\beta<\alpha$, $f(\beta)<\alpha$ and $g(\beta)<\alpha$.
\item $g(\alpha)\neq f(\alpha)$.
\item There exists $\eta\in J^{\alpha+1}_g$ such that $c_g(\eta)=g(\alpha)$.
\item There exists $\xi\in J^{\alpha+1}_f$ such that $c_f(\xi)=f(\alpha)$.
\item $|\mathcal{A}_f^{\alpha+1}|\leq sup(\{f(\beta)\}_{\beta\leq \alpha})$.
\item $|\mathcal{A}_g^{\alpha+1}|\leq sup(\{g(\beta)\}_{\beta\leq \alpha})$.
\item $\Pi(\mathcal{A}_g^{\alpha})= \mathcal{A}_f^{\alpha}$.
\end{itemize}
By symmetry we may assume that $g(\alpha)>f(\alpha)$.\\
By Lemma 5.8, $\eta$ and $\alpha$ satisfy $dim(p_\eta^g,\mathcal{A}^g)=c_g(\eta)=g(\alpha)$, so the type $\Pi(p_\eta^g)=\{\varphi(x,\Pi(c))|\varphi(x,c)\in p_\eta^g\}$ has $dim(\Pi(p_\eta^g),\mathcal{A}^f)=g(\alpha)$.\\
Since $\eta\in J_g^{\alpha+1}$ and $\Pi(\mathcal{A}_g^{\alpha})= \mathcal{A}_f^{\alpha}$, $\Pi(\mathbb{A}(\eta))\subseteq \mathcal{A}_f^{\alpha}$.
On the other hand, by the way we chose $\alpha$, we conclude that $|\mathcal{A}_f^{\alpha+1}|<g(\alpha)=dim(\Pi(p_\eta^g),\mathcal{A}^f)$. So there exists an independence sequence $A\subseteq \mathcal{A}^f$ over $\Pi(\mathbb{A}(\eta))$, such that $a\models \Pi(p_\eta^g)$, with an element $b\in A\backslash \mathcal{A}_f^{\alpha+1}$ that satisfy $b\downarrow_{\Pi(\mathbb{A}(\eta))}\mathcal{A}_f^{\alpha+1}$.\\
For every $u\in \mathcal{I}_f$ denote by $\bar{u}$ the closure of $u$ under initial segments.\\
Let $\{u_i\}_{i<g(\alpha)^+}$ be a sequence of elements of $\mathcal{I}_f$ with the following properties:
\begin{itemize}
\item $b\in \mathcal{A}_{u_0}$.
\item Every $\bar{u}_i$ is a tree isomorphic to $\bar{u}_0$.
\item If $i\neq j$, then $\bar{u}_i\cap \bar{u}_j=\bar{u}_0\cap J_f^{\alpha+1}$.
\item Every $\xi\in dom(c_f)\cap \bar{u}_0$ satisfies $c_f(\xi)= c_f(G_i(\xi))$, where $G_i$ is the isomorphism between $\bar{u}_0$ and $\bar{u}_i$.
\end{itemize}
For every $\xi\in \bar{u}_0$ such that $\xi\restriction_n\in J_f^{\alpha+1}$ and $\xi\restriction_{n+1}\in\bar{u}_0\backslash J_f^{\alpha+1}$ it holds that, by Definition 4.6 $\xi\restriction_n$ has $\kappa$ many immediate successors in $J_f\backslash J_f^{\alpha+1}$. Also by Definition 4.6 the elements of $J_f$ are all the functions $\eta:s\rightarrow \omega\times \kappa^4$ that satisfy the items 1 to 8, therefore each of this immediate successors of $\xi\restriction_n$, $\zeta$, satisfies that in the set $\{\eta\in J_f|\zeta\leq \eta\}$ there is a subtree isomorphic (as coloured tree) to $\bar{u}_0\backslash J_f^{\alpha+1}$.\\
This and the fact that $u_0$ is finite, gives the existence of the sequence $\{u_i\}_{i<g(\alpha)^+}$.\\
By the way we chose the sequence $\{u_i\}_{i<g(\alpha)^+}$, for every $i<g(\alpha)^+$, the isomorphism $G_i$ induces an isomorphism $H_i:J_f^{\alpha+1}\cup \bar{u}_0\rightarrow J_f^{\alpha+1}\cup \bar{u}_i$ such that $H_i\restriction_{J_f^{\alpha+1}}= id$. The other direction of this theorem implies that the models $\mathcal{A}(0)=\cup\{\mathcal{A}_v|v\in \mathcal{P}_\omega(J_f^{\alpha+1}\cup \bar{u}_0)\}$ and $\mathcal{A}(i)=\cup\{\mathcal{A}_v|v\in \mathcal{P}_\omega(J_f^{\alpha+1}\cup \bar{u}_i)\}$ are isomorphic and there is an isomorphism $h_i:\mathcal{A}(0)\rightarrow \mathcal{A}(i)$ such that $h_i\restriction_{\mathcal{A}_f^{\alpha+1}}=id$. Let $b_0=b$ and $b_i=h_i(b)$, for every $i<g(\alpha)^+$, then $t(b_i,\mathcal{A}_f^{\alpha+1})=t(b,\mathcal{A}_f^{\alpha+1})$. By the way $(\bar{u}_i)_{i<g(\alpha)^+}$ was constructed, Lemma 5.3 and the finite character of forking, the models $(\mathcal{A}(i))_{i<g(\alpha)^+}$ are independent over $\mathcal{A}_f^{\alpha+1}$, and thus for every $i<g(\alpha)^+$, $b_i\downarrow_{\mathcal{A}_f^{\alpha+1}}\cup_{j\neq i}b_j$. Since $b\downarrow_{\Pi(\mathbb{A}(\eta))}\mathcal{A}_f^{\alpha+1}$, then for every $i<g(\alpha)^+$, $b_i\downarrow_{\Pi(\mathbb{A}(\eta))}\mathcal{A}_f^{\alpha+1}$, so $b_i\downarrow_{\Pi(\mathbb{A}(\eta))}\cup_{j\neq i}b_j$. Therefore $\{b_i\}_{i<g(\alpha)^+}$ is an independence sequence over $\Pi(\mathbb{A}(\eta))$. We conclude that $dim (\Pi (p_\eta^g), \mathcal{A}^f)\ge g(\alpha)^+$ a contradiction with $dim (\Pi (p_\eta^g), \mathcal{A}^f)= dim (p_\eta^g, \mathcal{A}^g) =g(\alpha)$.
\end{proof}
\begin{corolario}
Assume $T$ is stable and has the OCP, then $E^\kappa_{\omega\text{-club}} \leq_c \cong_T$.
\end{corolario}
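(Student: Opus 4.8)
The statement is the global version of Theorem 5.9, which applies only to functions $f$ in the restricted class $\mathcal{N}$ of those $f\colon\kappa\to Card\backslash\lambda_r(T)$ with $f(\beta)^\omega=f(\beta)$ for all $\beta<\kappa$ and $f(\alpha)>\alpha^{++}$ for every cardinal $\alpha$. The plan is to precompose the map $f\mapsto\mathcal{A}^f$ with a continuous $S\colon\kappa^\kappa\to\kappa^\kappa$ whose image lies in $\mathcal{N}$ and which is itself a reduction of $E^\kappa_{\omega\text{-club}}$ to $E^\kappa_{\omega\text{-club}}$. To build $S$, I would first check that for each $\alpha<\kappa$ the set $C_\alpha=\{\mu\in Card\mid \lambda_r(T)<\mu<\kappa,\ \alpha^{++}<\mu,\ \mu^\omega=\mu\}$ has size $\kappa$: since $\kappa$ is inaccessible, for any $\gamma<\kappa$ iterating $\nu\mapsto(\nu^\omega)^+$ for $\omega_1$ steps starting above $\gamma$ produces a cardinal $\mu<\kappa$ with $cf(\mu)=\omega_1$ and $\mu^\omega=\mu$, so the cardinals in $C_\alpha$ are cofinal in $\kappa$ and $|C_\alpha|=\kappa$. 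Fix an injection $e_\alpha\colon\kappa\to C_\alpha$ and put $S(\eta)(\alpha)=e_\alpha(\eta(\alpha))$.

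By construction every $S(\eta)$ lies in $\mathcal{N}$, so Theorem 5.9 applies to any pair $S(\eta),S(\xi)$. Since each $e_\alpha$ is injective, $\{\alpha<\kappa\mid S(\eta)(\alpha)=S(\xi)(\alpha)\}=\{\alpha<\kappa\mid \eta(\alpha)=\xi(\alpha)\}$, so the two sets contain exactly the same $\omega$-clubs and hence $\eta\ E^\kappa_{\omega\text{-club}}\ \xi$ if and only if $S(\eta)\ E^\kappa_{\omega\text{-club}}\ S(\xi)$. Moreover $S$ is continuous, because $S(\eta)(\alpha)$ depends only on $\eta(\alpha)$, so $S(\eta)\restriction_\beta$ is already determined by $\eta\restriction_\beta$ for every $\beta<\kappa$.

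It remains to turn $f\mapsto\mathcal{A}^f$ into a continuous map into $\kappa^\kappa$. For $f\in\mathcal{N}$ the model $\mathcal{A}^f=\cup_{u\in\mathcal{I}_f}\mathcal{A}^f_u$ has size $\kappa$ (there are $\kappa$ nodes in $J_f$ while each $\mathcal{A}^f_\eta$ has size $<\kappa$ by the remark following Lemma 5.3), so it admits a presentation on the domain $\kappa$. I would fix, uniformly in $f$, an enumeration of such a presentation $\mathcal{B}^f$ and a $\zeta=\Phi_0(f)\in\kappa^\kappa$ with $\mathcal{A}_\zeta=\mathcal{B}^f\cong\mathcal{A}^f$, arranged so that the substructure coded below any level, together with its atomic diagram, depends only on an initial segment of $f$; continuity of $\Phi_0$ then follows exactly as in the continuity arguments of Theorem 2.8 and Lemma 3.2. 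Setting $\Phi=\Phi_0\circ S$, each $\mathcal{A}^{S(\eta)}$ is a model of $T$, so on the range of $\Phi$ the relation $\cong_T$ is genuine isomorphism, and combining the reduction property of $S$ with Theorem 5.9 gives $\eta\ E^\kappa_{\omega\text{-club}}\ \xi\iff S(\eta)\ E^\kappa_{\omega\text{-club}}\ S(\xi)\iff\mathcal{A}^{S(\eta)}\cong\mathcal{A}^{S(\xi)}\iff\Phi(\eta)\ \cong_T\ \Phi(\xi)$. As a composite of continuous maps $\Phi$ is continuous, so $E^\kappa_{\omega\text{-club}}\leq_c\cong_T$.

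The genuine work is the continuity of $\Phi_0$: the family $\{\mathcal{A}^f_u\}$ is built abstractly inside the monster model and carries no canonical domain, so the task is to thread an explicit $\kappa$-enumeration through the construction preceding Lemma 5.3 so that bounded initial segments of the coded structure are decided by bounded initial segments of $f$. This is plausible because for a height-$\omega$ node $\eta$ of $J_f$ the colour $c_f(\eta)$ — and hence the local data $\{a_i\}_{i<c_f(\eta)}$ feeding $\mathcal{A}^f_\eta$ — depends on $f$ only at the single point $sup(rang(\eta_5))$, and the nodes relevant below a fixed level of $\mathcal{B}^f$ involve only an initial segment of $f$. Making this bookkeeping precise is the main obstacle; everything else is a direct consequence of Theorem 5.9.
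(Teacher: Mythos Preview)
Your approach is essentially identical to the paper's: both first normalize the input by a continuous coordinate-wise map $\eta\mapsto(\alpha\mapsto e_\alpha(\eta(\alpha)))$ into the restricted class where Theorem 5.9 applies (the paper uses bijections $\mathcal{G}_\beta:\kappa\to S_\beta$ where your $e_\alpha$ are injections, which is immaterial), and then code the resulting model $\mathcal{A}^{S(\eta)}$ continuously into $\kappa^\kappa$, deferring the continuity of this last step to the argument of Lemma 3.2. Your treatment is in fact more careful than the paper's, which dispatches the coding step in one sentence; your discussion of why $|C_\alpha|=\kappa$ and of the bookkeeping needed for continuity of $\Phi_0$ fills in details the paper leaves implicit.
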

\begin{proof}
Let $f$ and $g$ be elements of $\kappa^\kappa$.
First we will construct a function $F:\kappa^\kappa\rightarrow\kappa^\kappa$ such that $f\ E^\kappa_{\omega\text{-club}}\ g$ if and only if $\mathcal{A}^{F(f)}$ and $\mathcal{A}^{F(g)}$ are isomorphic.\\ \\
For every cradinal $\alpha<\kappa$, define $S_\alpha=\{\beta<\kappa|\lambda_r(T),\alpha^{+++}<\beta\text{ and } \alpha^\omega=\alpha\}$.
Let $\mathcal{G}_\beta$ be a bijection from $\kappa$ into $S_\beta$, for every $\beta<\kappa$. For every $f\in \kappa^\kappa$ define $F(f)$ by $F(f)(\beta)=\mathcal{G}_\beta(f(\beta))$, for every $\beta<\kappa$. Clearly $f\ E^\kappa_{\omega\text{-club}}\ g$ if and only if $F(f)\  E^\kappa_{\omega\text{-club}}\ F(g)$ i.e. $\mathcal{A}^{F(f)}$ and $\mathcal{A}^{F(g)}$ are isomorphic and $F$ is continuous.\\ \\
Finally we need to find $\mathcal{G}:\{\mathcal{A}^{F(f)}|f\in \kappa^\kappa\}\rightarrow\kappa^\kappa$ such that $\mathcal{A}_{\mathcal{G}(\mathcal{A}^{F(f)})}\cong \mathcal{A}^{F(f)}$ and $f \mapsto \mathcal{G}(\mathcal{A}^{F(f)})$ is continuous. This can be done as in Lemma 3.2.
\end{proof}
\begin{corolario}
Assume $T_1$ is a classifiable theory and $T_2$ is a stable theory with the OCP, then $\cong_{T_1}\leq_c \cong_{T_2}$.
\end{corolario}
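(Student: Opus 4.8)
The plan is to obtain the reduction by composing two reductions already established in the paper, using the relation $E^\kappa_{\omega\text{-club}}$ as an intermediary. First I would invoke Theorem 2.8 with the choice $\mu=\omega$: since $\omega$ is a regular cardinal and $\omega<\kappa$ (recall $\kappa>\aleph_0$), and since $T_1$ is classifiable, that theorem furnishes a continuous function $\mathcal{F}\colon\kappa^\kappa\to\kappa^\kappa$ that is a reduction of $\cong_{T_1}$ to $E^\kappa_{\omega\text{-club}}$, so that $\cong_{T_1}\ \leq_c\ E^\kappa_{\omega\text{-club}}$.

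Next I would invoke Corollary 5.10: since $T_2$ is stable and has the OCP, that corollary gives a continuous function $\mathcal{G}\colon\kappa^\kappa\to\kappa^\kappa$ reducing $E^\kappa_{\omega\text{-club}}$ to $\cong_{T_2}$, so that $E^\kappa_{\omega\text{-club}}\ \leq_c\ \cong_{T_2}$. (This is where the standing hypotheses of Sections 4 and 5 are used, namely that $\kappa$ is inaccessible and $\kappa>\lambda_r(T_2)$.)

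It then remains to observe that continuous reducibility is transitive, which finishes the proof via $\cong_{T_1}\ \leq_c\ E^\kappa_{\omega\text{-club}}\ \leq_c\ \cong_{T_2}$. Concretely, I would take the composite $\mathcal{G}\circ\mathcal{F}\colon\kappa^\kappa\to\kappa^\kappa$. It is continuous as a composition of continuous functions, and for all $\eta,\xi\in\kappa^\kappa$ one has the chain of equivalences
$$\eta\cong_{T_1}\xi\ \Longleftrightarrow\ \mathcal{F}(\eta)\ E^\kappa_{\omega\text{-club}}\ \mathcal{F}(\xi)\ \Longleftrightarrow\ \mathcal{G}(\mathcal{F}(\eta))\cong_{T_2}\mathcal{G}(\mathcal{F}(\xi)),$$
the first equivalence because $\mathcal{F}$ is a reduction and the second because $\mathcal{G}$ is a reduction. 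Hence $\mathcal{G}\circ\mathcal{F}$ is a continuous reduction of $\cong_{T_1}$ to $\cong_{T_2}$, giving $\cong_{T_1}\ \leq_c\ \cong_{T_2}$.

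There is essentially no substantive obstacle here: the entire content has been done in Theorem 2.8 and Corollary 5.10, and the only thing to check is the transitivity of $\leq_c$, which is routine since the class of continuous functions is closed under composition and the reduction property is preserved under composition. The one point worth stating explicitly is the legitimacy of the choice $\mu=\omega$ in Theorem 2.8, so that both reductions are expressed through the same equivalence relation $E^\kappa_{\omega\text{-club}}$ and can be chained.
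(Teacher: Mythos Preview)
Your proposal is correct and matches the paper's own proof exactly: the paper simply states that the corollary follows from Theorem 2.8 and Corollary 5.10, which is precisely the composition argument you spell out. Your additional remarks about choosing $\mu=\omega$ and the transitivity of $\leq_c$ just make explicit what the paper leaves implicit.
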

\begin{proof}
Follows from Theorem 2.8 and Corollary 5.10.
\end{proof}


\begin{thebibliography}{FHK14}

\bibitem[FHK]{FHK14}
S.D. Friedman, T.~Hyttinen, and V.~Kulikov, \emph{On borel reducibility in
  generalised baire space},
  to appear in Fundamenta Mathematicae.

\bibitem[FHK14]{FHK13}
S.D. Friedman, T.~Hyttinen, and V.~Kulikov, \emph{Generalized descriptive set theory and classification theory},
  Memoirs of the Amer. Math. Soc. Vol. 230/1081 (American Mathematical Society, 2014).

\bibitem[FS89]{FS}
H.~Friedman and L.~Stanley, \emph{A Borel reducibility theory for classes of countable structures}, J. Symb. Log.
  \textbf{54}(3), 894 -- 914 (1989).

\bibitem[HK14]{HK}
T.~Hyttinen and V.~Kulikov, \emph{On $\Sigma^1_1$-complete equivalence
  relations on the generalized baire space}, Math. Log. Quart.
  \textbf{61}, 66 -- 81 (2015).

\bibitem[HS98]{HS98}
T.~Hyttinen and S.~Shelah, \emph{On the number of elementary submodels of an
  unuperstable homogeneous structure}, Math. Log. Quart. \textbf{44}, 354 --358
  (1998).

\bibitem[Hyt97]{hy}
T.~Hyttinen, \emph{On nonstructure of elementary submodels of an unsuperstable
  homogeneous structure}, Math. Log. Quart. \textbf{43}, 134
  -- 142 (1997).

\bibitem[She90]{Sh}
S.~Shelah, \emph{Classification theory}, Stud. Logic Found. Math. Vol. 92,
  (North-Holland, Amsterdam, 1990).

\end{thebibliography}
\providecommand{\bysame}{\leavevmode\hbox to3em{\hrulefill}\thinspace}
\providecommand{\MR}{\relax\ifhmode\unskip\space\fi MR }
\providecommand{\MRhref}[2]{%
  \href{http://www.ams.org/mathscinet-getitem?mr=#1}{#2}
}
\providecommand{\href}[2]{#2}

\end{document}